\newtheorem{thm}{Theorem}[section]
\newtheorem{prop}[thm]{Proposition}
\newtheorem{cor}[thm]{Corollary}
\newtheorem{lem}[thm]{Lemma}
\newtheorem*{thmnn}{Theorem}
\theoremstyle{remark}
\newtheorem{rmk}[thm]{Remark}
\theoremstyle{definition}
\newtheorem{defn}[thm]{Definition}
\begin{document}

\title{Shimura Lifts of Weakly Holomorphic Modular Forms}

\author{Yingkun Li \thanks{Partially supported by DFG grant BR-2163/4-1.} , Shaul Zemel\thanks{This work was carried out mainly while the author was working at the Technische Universit\"{a}t Darmstadt in Germany, and was partially supported by DFG grant BR-2163/4-1.}}

\maketitle


\section*{Introduction}

The systematic investigation of half-integral weight modular forms began with Shimura's seminal paper \cite{[Sh]}, where he developed the Hecke theory for these forms. To each eigenform of weight $k+\frac{1}{2}$, level $4N$, and character $\chi$, Shimura attached an eigenform of weight $2k$ and character $\chi^{2}$, with some level. This is known as the \emph{Shimura lift}. Niwa proved in \cite{[N]} that one can always take $2N$ to be the level of the lift, as conjectured by Shimura.

Later, Kohnen defined in \cite{[K1]} and \cite{[K2]} a certain plus-space of half-integral weight modular forms of level $4N$ with $N$ odd and square-free. He then showed that the Shimura lifts of eigenforms in these plus-spaces have level $N$. The paper \cite{[UY]} extended the results of \cite{[K1]} and \cite{[K2]} to the case where $N$ may be even and square-free. In another direction, \cite{[C1]} gave explicit formulae for Shimura lifts of products of modular forms of integral weight and certain theta functions. This was followed, and generalized, by \cite{[C2]}, \cite{[HN]} and others.

Shintani gave in \cite{[Sn]} a construction, which is reciprocal to that of Shimura, using Siegel's indefinite theta series. The paper \cite{[N]} of Niwa mentioned above also uses a theta function, which depends on the level and character of the input form. Going further in the direction of the theta lift, Borcherds investigated in \cite{[B]} a theta lift from elliptic modular forms to automorphic forms on orthogonal groups, a special case of the latter gives the classical elliptic modular forms once more. Borcherds' work systematically uses vector-valued modular forms, and employs the regularization technique in \cite{[HM]} to lift weakly holomorphic modular forms. The resulting modular forms are meromorphic with known poles. One of the examples presented in Section 14 of \cite{[B]} is referred to as a ``Shimura lift''.

\smallskip

In this paper, we aim to deduce Shimura's result from Borcherds' theta lift. The main consequence of this argument is an immediate extension of the Shimura lift to include \emph{weakly holomorphic} modular forms of half-integral weight, whose images will be meromorphic modular forms of even weights with poles at CM points. Our construction differs from the one of Niwa in \cite{[N]} in that \cite{[B]} integrates \emph{vector-valued} modular form against a vector-valued theta kernel. To obtain Shimura's classical result, which takes scalar-valued modular form as input, we first need to lift a scalar-valued modular form on a congruence subgroup to an appropriate vector-valued modular form on the full metaplectic group. We accomplish this in two steps. First we establish an isomorphism, which is interesting in its own right, between the plus-space of scalar-valued modular forms of level $4N$ and a space of vector-valued modular forms of level $N$ with a 2-dimensional Weil representation. Then we will generalize a lemma from \cite{[B]} to obtain vector-valued modular forms for the full metaplectic group. These vector-valued modular forms will transform with the Weil representation associated to a lattice closely related to the congruence group $\Gamma_{1}(N)$. At the final stage we verify that the construction from \cite{[B]} takes the resulting vector-valued modular form of level 1 to the Shimura lift of index 1 of the original scalar-valued modular form. The Shimura lift of general index is extracted out of the index 1 case using an adaptation of an argument from \cite{[N]}. We conclude with some finer analysis of these results to prove that if $4|N$ then the Shimura lift of \emph{any} modular form, not necessarily in the the plus-space, has level $N$.

A simplified version of our main result is as follows.
\begin{thmnn}
Let $f$ be a holomorphic modular form in $\mathcal{M}_{k+\frac{1}{2}}(\widetilde{\Gamma}_{0}(4N),\omega_{\chi})$, where $\chi$ is a character modulo $N$ (see Definition \ref{autform} and Equation \eqref{omegachi}). Suppose $f$ has Fourier expansion $\sum_{n=0}^{\infty}c_{n}q^{n}$ at $\infty$. Let $t$ be a square-free integer.
If $4|N$, or if $t$ is odd and $f$ lies in the appropriate plus-space, then the function \[\mathcal{S}_{t}^{(N)}f(\tau):=-\frac{L(1-k,\eta)}{2}c_{0}+\sum_{l=1}^{\infty}\sum_{\substack{d|l \\ (d,Nt)=1}}d^{k-1}\eta(d)c_{\frac{tl^{2}}{d^{2}}}q^{l}\] is a holomorphic modular form of weight $2k$ on $\Gamma_{0}(N)$ with character $\chi^{2}$. Here $\eta$ is a slight variation of $\chi$, which depends on $t$ (see Theorem \ref{Stf}). Otherwise, the level of modularity is $2N$. When $f$ is weakly holomorphic, the previous result still holds, with the same holomorphic Fourier expansion at the cusp. The resulting function will have, however, poles of order $k$ at CM points.
\end{thmnn}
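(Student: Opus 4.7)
The plan is to realize $\mathcal{S}_t^{(N)}f$ as an explicit instance of the theta lift from Section 14 of \cite{[B]}, pushing through the reduction from scalar- to vector-valued input outlined in the introduction. I would proceed in four stages, corresponding to the structural pieces announced there.

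First, I would convert $f$ into vector-valued data. When $f$ lies in the plus-space, apply the advertised isomorphism to transfer $f$ to a vector-valued form $F$ of level $N$ with a two-dimensional Weil representation whose components record the coefficients $c_n$ according to the residue class of $n$ modulo squares. Using the generalization of the lemma from \cite{[B]}, induce $F$ up to a vector-valued form $\widetilde{F}$ on the full metaplectic group $\mathrm{Mp}_2(\mathbb{Z})$, transforming under the Weil representation of a signature $(2,1)$ lattice $L$ tied to $\Gamma_1(N)$. This is essentially an algebraic bookkeeping: verify that the orbit of $F$ under a chosen system of double-coset representatives of $L$ assembles into a section of the correct Weil representation on $\mathrm{SL}_2(\mathbb{Z})$.

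Next, feed $\widetilde{F}$ into Borcherds' theta integral against the Siegel-style theta kernel attached to $L$. By the regularization of \cite{[HM]} used in \cite{[B]}, the integral converges when $\widetilde{F}$ is weakly holomorphic and produces a meromorphic modular form of weight $2k$ and character $\chi^{2}$ on $\Gamma_{0}(N)$, whose poles lie at the CM points cut out by the principal part of $\widetilde{F}$; this accounts for the ``poles of order $k$ at CM points'' in the weakly holomorphic case. Unfolding at the cusp $\infty$ should deliver the divisor sum $\sum_{d\mid l,\,(d,N)=1}d^{k-1}\eta(d)c_{l^{2}/d^{2}}$ and identify the constant term with $-L(1-k,\eta)c_{0}/2$, reproducing $\mathcal{S}_{1}^{(N)}f$ exactly. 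To reach arbitrary square-free $t$, I would adapt the argument of \cite{[N]} by rescaling $L$ by $t$, or equivalently by passing to a sublattice of index $t$: this substitutes $tl^{2}$ for $l^{2}$, constrains $d$ to be coprime to $tN$, and twists the character by the quadratic character attached to $t$, which one identifies with the stated $\eta$ via quadratic reciprocity and the explicit Weil formula.

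Finally, the claim for $4\mid N$ without any plus-space hypothesis requires a sharpening of the first stage: when $4\mid N$, the lattice used in the induction already accommodates both parities of the quadratic form modulo $4$, so an arbitrary $f\in\mathcal{M}_{k+1/2}(\widetilde{\Gamma}_{0}(4N),\omega_{\chi})$ lifts to a well-defined $\widetilde{F}$ and the construction above lands on $\Gamma_{0}(N)$; outside this case the missing parities force a quadratic twist that descends only to $\Gamma_{0}(2N)$, giving the weaker statement. The main obstacle, I expect, is the unfolding computation at the cusp $\infty$: one has to track the Weil representation of $L$ through Siegel's theta transformation, isolate the contribution of lattice vectors of norm $l^{2}$, and recognize the constant term as $-L(1-k,\eta)c_{0}/2$ via a Rankin--Selberg-type manipulation. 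The pole structure at CM points and the behaviour at cusps other than $\infty$ in the weakly holomorphic regime then follow from the Hejhal--Maass regularization, but require care to rule out spurious contributions from the non-holomorphic part of the input.
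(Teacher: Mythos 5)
Your outline reproduces the paper's own roadmap --- the scalar-to-vector lift into a two-dimensional Weil representation, the induction to $Mp_{2}(\mathbb{Z})$ via the generalization of Borcherds' Lemma 2.6, Theorem 14.3 of \cite{[B]} applied to the signature $(2,1)$ lattice $L_{1}(N)$, and Niwa's rescaling trick for general square-free $t$ --- so the architecture is the intended one. Two steps, however, are asserted in a form that would not go through as written. The claim that the lift transforms with character $\chi^{2}$ on $\Gamma_{0}(N)$ is not delivered by the unfolding at $\infty$; the theta integral a priori only gives modularity under the discriminant kernel $\Gamma_{1}^{\sqrt{1}}(N)$. The squaring of the character --- the one genuinely surprising feature of the statement --- comes from a separate mechanism: a matrix $\big(\begin{smallmatrix} a & b \\ c & d\end{smallmatrix}\big)\in\Gamma_{0}(N)$ acts on $D_{B}(N)$ by the automorphism $m_{d^{2}}$ (Proposition \ref{Gamma0NL1N}), while the induced form $\mathcal{L}_{N}\mathcal{L}_{+}f$ lies in the $\overline{\chi}$-isotypic component for these automorphisms (Proposition \ref{LNphichi}), so the resulting twist multiplies the lift by $\overline{\chi}(d^{-2})=\chi^{2}(d)$. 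Your proposal never engages with the automorphisms of the discriminant group, and without them the character claim is unproved.

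The second gap is the case $4\mid N$ with no plus-space hypothesis. Your explanation --- that ``the lattice used in the induction already accommodates both parities'' --- is not the operative mechanism and I do not see how to make it one: the two-dimensional lift $\mathcal{L}_{\varepsilon}$ is defined only on the plus-space, for every $N$. What actually makes this case work is that the formula for $\mathcal{S}_{t}^{(N)}f$ only reads the coefficients $c_{tl^{2}/d^{2}}$, whose indices are congruent to $0$ or $t$ modulo $4$, together with the fact that when $4\mid N$ the element $T^{1/4}$ normalizes $\widetilde{\Gamma}_{N}^{\psi}$, so the projection $P_{t}$ onto exactly those coefficients preserves modularity and commutes with the diamond operators; hence $\mathcal{S}_{t}^{(N)}f=\mathcal{S}_{t}^{(N)}(P_{t}f)$ with $P_{t}f$ in the plus-space (Propositions \ref{proj+4divN} and \ref{4divNlevN}). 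Similarly, the level-$2N$ fallback in the remaining cases is not produced by ``a quadratic twist'': it comes from feeding $f_{4t}$ --- which lies in a plus-space of level $4Nt$ for an arbitrary $f$ --- into the index-one lift and undoing the rescaling by $C_{2t}$, as in Theorem \ref{level}. These are the two ideas your sketch is missing; the rest (the unfolding, the Bernoulli-polynomial constant term yielding $-L(1-k,\eta)c_{0}/2$ via partial zeta values, and the absence of non-holomorphic completion terms because $K$ is positive definite) matches the paper's computation.
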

In fact, if $k=1$ then the same result holds also for harmonic weak Maass forms whose images under the operator $\xi_{3/2}$ of \cite{[BF]} are cusp forms of weight $\frac{1}{2}$.

Another feature of this construction is that it is independent of how our lifted modular forms transforms under (the metaplectic inverse image of) $\Gamma_{0}(4N)$. Only modularity with respect to a group slightly larger than $\Gamma_{1}(4N)$ is required. The somewhat peculiar phenomenon of ``squaring of the characters'' is then explained in terms of certain automorphisms of the discriminant group of the lattice we work with.

\smallskip

This paper is divided into 6 sections. Section \ref{Wt1/2Z} introduces modular forms of half-integral weights using only characters instead of multiplier systems, to facilitate the applications in this paper. In Section \ref{2dLift} we present the plus-spaces and their isomorphisms with spaces of modular forms with 2-dimensional representations. Section \ref{2N^2Lift} investigates the lifts to modular forms of level 1 with representation $\rho_{1}(N)$ of dimension $2N^{2}$, and some of their automorphisms. Section \ref{L1N} describes a certain lattice, whose discriminant kernel is roughly $\Gamma_{1}(N)$ and the associated Weil representation is $\rho_{1}(N)$. In Section \ref{ThetaLift} we apply the theta lift from \cite{[B]}, and show that it is related to the Shimura lift of index 1. Finally, Section \ref{HiLev} considers the Shimura lift with general index, and determines the level of the Shimura lift depending on some parameters.

\smallskip

We are grateful to J. Bruinier for several interesting discussions of this topic.

\section{Automorphic Forms of Half-Integral Weight \label{Wt1/2Z}}

Let $\mathcal{H}=\big\{\tau\in\mathbb{C}\big|\mathrm{Im}\tau>0\big\}$ be the upper half-plane, and $Mp_{2}(\mathbb{R})$ denotes the double cover of
$SL_{2}(\mathbb{R})$. The elements of $Mp_{2}(\mathbb{R})$ are pairs $(\alpha,\phi)$ where $\alpha \in SL_{2}(\mathbb{R})$ and $\phi:\mathcal{H}\to\mathbb{C}$ is a holomorphic function satisfying $\phi^{2}(\tau)=j(\alpha,\tau)$. Here and throughout, $j(\alpha,\tau)$ is $c\tau+d$ if $\alpha=\big(\begin{smallmatrix} a & b \\ c & d\end{smallmatrix}\big)$. This is the usual factor of automorphy for the action of such an element $\alpha$ of $SL_{2}(\mathbb{R})$ (as well as of $Mp_{2}(\mathbb{R})$) on $\mathcal{H}$ via the fractional linear transformation $\alpha\tau:=\frac{a\tau+b}{c\tau+d}$. The multiplication rule in $Mp_{2}(\mathbb{R})$ is defined by \[\big(\alpha,\phi(\tau)\big)\big(\beta,\varphi(\tau)\big)=\big(\alpha\beta,\phi(\beta\tau)\varphi(\tau)\big).\]
The subgroup $Mp_{2}(\mathbb{Z}) \subseteq Mp_{2}(\mathbb{R})$ lying over $SL_{2}(\mathbb{Z})$ is the double cover of the latter group. It is generated by \[T:=\Bigg(\begin{pmatrix} 1 & 1 \\ 0 & 1\end{pmatrix},1\Bigg) \quad\mathrm{and}\quad S:=\Bigg(\begin{pmatrix} 0 & -1 \\ 1 & 0\end{pmatrix},\sqrt{\tau}\Bigg)\] with the relation $S^{2}=(ST)^{3}$. We also set
\begin{equation}
R:=ST^{-1}S^{-1}=\Bigg(\begin{pmatrix} 1 & 0 \\ 1 & 1\end{pmatrix},\sqrt{\tau+1}\Bigg)\quad\mathrm{and}\quad Z:=S^{2}=(ST)^{3}=(-I,i). \label{RZdef}
\end{equation}
The latter element has order 4 in $Mp_{2}(\mathbb{Z})$, and it generates the center of that group. We take the branch cut $\mathbb{C}\backslash(-\infty,0)$ for the square root function with $\sqrt{-x}=\sqrt{x} \cdot i\in\mathcal{H}$ for every positive real number $x$.

We shall work here with several types of congruence subgroups of $SL_{2}(\mathbb{Z})$ and $Mp_{2}(\mathbb{Z})$. Given a natural number $N$, let $\Gamma_{1}(N)\subseteq\Gamma_{0}(N) \subseteq SL_{2}(\mathbb{Z})$ be the standard congruence subgroups (see, e.g., page 13 of \cite{[DS]}), and $\Gamma^{0}(N):=\big(\begin{smallmatrix} N & 0 \\ 0 & 1\end{smallmatrix}\big)\Gamma_{0}(N)\big(\begin{smallmatrix} 1/N & 0 \\ 0 & 1\end{smallmatrix}\big)$. Their inverse images in $Mp_{2}(\mathbb{Z})$ will be denoted by $\widetilde{\Gamma}_{0}(N)$, $\widetilde{\Gamma}_{1}(N)$, and $\widetilde{\Gamma}^{0}(N)$ respectively. Other groups that will arise naturally in our investigations are
\begin{equation}
\Gamma_{N}:=\Gamma_{0}(4N)\cap\Gamma_{1}(N)\quad\mathrm{and}\quad\widetilde{\Gamma}_{N}:=
\widetilde{\Gamma}_{0}(4N)\cap\widetilde{\Gamma}_{1}(N)=\big\{(\alpha,\phi) \in Mp_{2}(\mathbb{Z})\mid\alpha\in\Gamma_{N}\big\}. \label{GammaN}
\end{equation}

\smallskip

An \emph{even lattice} is a free $\mathbb{Z}$-module $L$ of finite rank which
is endowed with a non-degenerate $\mathbb{Z}$-valued bilinear form $(\cdot,\cdot)$, such that $\lambda^{2}:=(\lambda,\lambda)$ is even for every $\lambda \in L$. Equivalently, $L$ admits a $\mathbb{Z}$-valued quadratic form $Q$, which is related to the bilinear form via the equality $Q(\lambda)=\frac{\lambda^{2}}{2}$ and the polarization identity
$(\lambda,\mu)=Q(\lambda+\mu)-Q(\lambda)-Q(\mu)$ for all $\lambda$ and $\mu$ in $L$. The dual lattice $\mathrm{Hom}(L,\mathbb{Z})$ is then identified, through the bilinear form $(\cdot,\cdot)$, with the lattice \[L^{*}:=\big\{\lambda \in L_{\mathbb{R}}\big|(\lambda,\mu)\in\mathbb{Z}\mathrm{\ for\ all' }\mu \in L\big\} \subseteq L_{\mathbb{R}}:=L\otimes\mathbb{R},\] which contains $L$. The quotient group $D_{L}:=L^{*}/L$, called the \emph{discriminant group} of $L$, has finite cardinality $\Delta_{L}$. The quadratic form $Q$ induces a well-defined $\mathbb{Q}/\mathbb{Z}$-valued quadratic form $\gamma\mapsto\frac{\gamma^{2}}{2}\in\mathbb{Q}/\mathbb{Z}$ on $D_{L}$, rendering the latter group a finite quadratic module in the sense of \cite{[Sch]} and \cite{[Str]}.

Let $(b_{+},b_{-})$ be the signature of the quadratic space $L_{\mathbb{R}}:=L\otimes\mathbb{R}$. The group $Mp_{2}(\mathbb{Z})$ admits a \emph{Weil representation} $\rho_{L}$ on $\mathbb{C}[D_{L}]$, in which the standard basis vector $\mathfrak{e}_{\gamma}$ corresponding to $\gamma \in D_{L}$ is an eigenvector of $T$ with eigenvalue $\mathbf{e}\big(\frac{\gamma^{2}}{2}\big)$, and $S$ operates via a Fourier transform:
\begin{equation}
\rho_{L}(S)\mathfrak{e}_{\gamma}=\frac{\zeta_{8}^{-sgn(L)}}{\sqrt{\Delta_{L}}}
\sum_{\delta \in D_{L}}\mathbf{e}\big(-(\gamma,\delta)\big)\mathfrak{e}_{\delta}. \label{rhoLS}
\end{equation}
Here and throughout, the symbol $\mathbf{e}(z)$ stands for $e^{2\pi iz}$ for any $z\in\mathbb{C}$, and $\zeta_{8}$ is the primitive 8th root of unity $\mathbf{e}\big(\frac{1}{8}\big)$. The previous two references, as well as \cite{[Ze1]}, give formulae for the action of a general element of $Mp_{2}(\mathbb{Z})$ via $\rho_{L}$.

\smallskip

A lattice which will be of important technical use in what follows is the rank 1 lattice $L_{1}$ generated by a single element with norm 2. Its discriminant group $D_{1}:=D_{L_{1}}$ is cyclic of order 2, and $\mathbb{C}[D_{1}]$ has the canonical basis consisting of the two vectors $\mathfrak{e}_{0}$ and $\mathfrak{e}_{1}$. Under the canonical basis, the Weil representation $\rho_{1}:=\rho_{L_{1}}$ becomes
\begin{equation}
\rho_{1}(T)=\begin{pmatrix} 1 & 0 \\ 0 & i\end{pmatrix},\quad\rho_{1}(S)=\frac{1-i}{2}\begin{pmatrix} 1 & 1 \\ 1 & -1\end{pmatrix},\quad\mathrm{and}\quad\rho_{1}(R)=\frac{1-i}{2}\begin{pmatrix} 1 & i \\ i & 1\end{pmatrix}. \label{rho1TSR}
\end{equation}
In addition, for elements $\big(\alpha,\pm\sqrt{j(\alpha,\tau)}\big)\in\widetilde{\Gamma}_{0}(4)$ the formula becomes
\begin{equation}
\rho_{1}\big(\alpha,\pm\sqrt{j(\alpha,\tau)}\big)=\psi\big(\alpha,\pm\sqrt{j(\alpha,\tau)}\big)\begin{pmatrix} 1 & 0 \\ 0 & i^{bd}\end{pmatrix},\quad\mathrm{where}\quad\psi\big(\alpha,\pm\sqrt{j(\alpha,\tau)}\big)=
\pm\bigg(\frac{c}{d}\bigg)\overline{\varepsilon_{d}}. \label{rho1Gamma04}
\end{equation}
Here $\varepsilon_{d}$ is the classical symbol which equals 1 if $d\equiv1(\mathrm{mod\ }4)$ and $i$ if $d\equiv3(\mathrm{mod\ }4)$. Note that the Legendre symbol here uses the convention of \cite{[Str]} and others (and not of \cite{[Ze1]}).

To every positive definite lattice $L$ one can associate a vector-valued theta function $\Theta_{L}$. When $L=L_{1}$, it is simply
\begin{equation}
\Theta(\tau):=\sum_{\delta\in\{0,1\}}\theta_{\delta}(\tau)\mathfrak{e}_{\delta},\quad\mathrm{with}\quad\theta_{0}(\tau)=\theta(\tau) :=\sum_{n\in\mathbb{Z}}q^{n^{2}},\quad\mathrm{and}\quad\theta_{1}(\tau)=\sum_{n\in\mathbb{Z}}q^{(n+1/2)^{2}}. \label{Theta}
\end{equation}
Here and throughout, the classical notation $q$ stands for $\mathbf{e}(\tau)$. Now, it is easily verified that
\begin{equation}
\theta(\tau)=\theta_{0}(4\tau)+\theta_{1}(4\tau),\quad\theta_{0}(\tau)=\frac{\theta\big(\frac{\tau}{4}\big)+\theta\big(\frac{\tau+2}{4}\big)}{2},
\quad\mathrm{and}\quad\theta_{1}(\tau)=\frac{\theta\big(\frac{\tau}{4}\big)-\theta\big(\frac{\tau+2}{4}\big)}{2}, \label{Thetarels}
\end{equation}
and that the non-zero Fourier coefficients of $\theta_{j}(4\tau)$ are congruent to $j$ modulo 4 for $j \in \{0, 1\}$.
One may consider the relations from Equation \eqref{Thetarels}, as well as the property of indices modulo 4, as the prototypical example for the level 1 case of Theorem \ref{rho1lift} below. The character $\psi$ is related to $\theta$ through the fact that it appears in its transformation rule: \[\theta(\alpha\tau)=\psi(\alpha,\phi)\phi(\tau)\theta(\tau)\quad\mathrm{for\ all}\quad(\alpha,\phi)\in\widetilde{\Gamma}_{0}(4).\]

\smallskip

\begin{rmk}
The element $R^{2}$, whose square is in $\widetilde{\Gamma}_{0}(4)$, generates $\widetilde{\Gamma}_{0}(2)$ with $\widetilde{\Gamma}_{0}(4)$ and preserves the latter group under conjugation. As its $\rho_{1}$-image $\big(\begin{smallmatrix} 0 & 1 \\ 1 & 0\end{smallmatrix}\big)$ simply interchanges the basis elements $\mathfrak{e}_{0}$ and $\mathfrak{e}_{1}$, the action of $\widetilde{\Gamma}_{0}(4)$ on $\mathfrak{e}_{1}$ is via the character $\psi_{R^{2}}$ sending $A\in\widetilde{\Gamma}_{0}(4)$ to $\psi(R^{2}AR^{-2})$. Combining this with the form of $\rho_{1}(R^{2})$ once more, we find that the restriction of $\rho_{1}$ to $\widetilde{\Gamma}_{0}(2)$ can be seen as the representation induced to $\widetilde{\Gamma}_{0}(2)$ from the character $\psi$ of the index 2 subgroup $\widetilde{\Gamma}_{0}(4)$. \label{indrep}
\end{rmk}

The subgroup $\ker\psi\subseteq\widetilde{\Gamma}_{0}(4)$ is a lift of $\Gamma_{1}(4)$ to $Mp_{2}(\mathbb{Z})$, and the group $\widetilde{\Gamma}_{0}(4)$ decomposes as $\langle Z \rangle\times\ker\psi$. Given any subgroup $\widetilde{\Gamma}\subseteq\widetilde{\Gamma}_{0}(4)$, we denote \[\widetilde{\Gamma}^{\psi}:=\widetilde{\Gamma}\cap\ker\psi,\quad\mathrm{so\ that\ in\ particular}\quad\ker\psi=\widetilde{\Gamma}_{0}^{\psi}(4)=\widetilde{\Gamma}_{1}^{\psi}(4).\]
Some details about the interplay between $\widetilde{\Gamma}$ and $\widetilde{\Gamma}^{\psi}$ are given below.
\begin{lem}
Suppose $\widetilde{\Gamma}\subseteq\widetilde{\Gamma}_{0}(4)$ is the double cover of some subgroup $\Gamma\subseteq\Gamma_{0}(4)$. Then the index $[\widetilde{\Gamma}:\widetilde{\Gamma}^{\psi}]$ is 2 if $\widetilde{\Gamma}\subseteq\widetilde{\Gamma}_{1}(4)$ and 4 otherwise. In the former case we have $\widetilde{\Gamma}= \widetilde{\Gamma}^{\psi} \times \langle Z^{2} \rangle$. In the latter case a decomposition of the form $\widetilde{\Gamma}=\widetilde{\Gamma}^{\psi} \times H$ is possible if and only if $Z\in\widetilde{\Gamma}$ and $H=\langle Z \rangle$. \label{Gammapsiind}
\end{lem}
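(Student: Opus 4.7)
The plan is to analyze the restriction of the character $\psi$ to $\widetilde{\Gamma}$, since both the index of $\widetilde{\Gamma}^{\psi}$ and the shape of any direct complement are dictated by the image of $\psi$ in the fourth roots of unity $\{\pm 1,\pm i\}$. Because $\widetilde{\Gamma}$ is a full double cover, it contains $Z^{2}=(I,-1)$, and \eqref{rho1Gamma04} gives $\psi(Z^{2})=-1$, so $\{\pm 1\}\subseteq\psi(\widetilde{\Gamma})$ in every case.

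Suppose first that $\widetilde{\Gamma}\subseteq\widetilde{\Gamma}_{1}(4)$. Then every $\alpha$ in $\Gamma$ has $d\equiv 1\pmod{4}$, so $\overline{\varepsilon_{d}}=1$, and $(c/d)=\pm 1$ (the Kronecker symbol is nonzero because $\gcd(c,d)=1$). Hence $\psi(\widetilde{\Gamma})=\{\pm 1\}$, the index is $2$, and since $Z^{2}$ is central of order $2$ with $\psi(Z^{2})=-1$, we obtain $\widetilde{\Gamma}=\widetilde{\Gamma}^{\psi}\times\langle Z^{2}\rangle$. Otherwise there is some $\alpha\in\Gamma$ with $d\equiv 3\pmod{4}$; then $\overline{\varepsilon_{d}}=-i$, so the two lifts of $\alpha$ contribute $\pm i$ to $\psi(\widetilde{\Gamma})$, forcing $\psi(\widetilde{\Gamma})=\{\pm 1,\pm i\}$ and the index to be $4$.

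In this second case, any direct-product decomposition $\widetilde{\Gamma}=\widetilde{\Gamma}^{\psi}\times H$ forces $H\cong\widetilde{\Gamma}/\widetilde{\Gamma}^{\psi}\cong\{\pm 1,\pm i\}$, so $H$ must be cyclic of order $4$. The core step is then to classify the elements $h=(\alpha,\phi)$ of order $4$ in $\widetilde{\Gamma}_{0}(4)$: one has $\alpha^{4}=I$; the case $\alpha=I$ makes $h^{2}=(I,1)$, and an $\alpha\in\Gamma_{0}(4)$ of exact order $4$ would satisfy $\alpha^{2}=-I$, i.e., $a^{2}+bc=-1$, which combined with $4\mid c$ forces $a^{2}\equiv 3\pmod{4}$ and is impossible. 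Therefore $\alpha=-I$, which means $h\in\{Z,Z^{-1}\}$, so necessarily $H=\langle Z\rangle$ and $Z\in\widetilde{\Gamma}$.

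Conversely, if $Z\in\widetilde{\Gamma}$, a direct computation from \eqref{rho1Gamma04} gives $\psi(Z)=-i$, so $\psi$ is injective on $\langle Z\rangle$ with image $\{\pm 1,\pm i\}$; hence $\widetilde{\Gamma}^{\psi}\cap\langle Z\rangle=\{1\}$, and centrality of $Z$ together with the index count produces the direct product $\widetilde{\Gamma}=\widetilde{\Gamma}^{\psi}\times\langle Z\rangle$. The only genuinely computational step is the elementary matrix classification of order-$4$ elements in $\Gamma_{0}(4)$; everything else reduces to bookkeeping with the values of $\psi$.
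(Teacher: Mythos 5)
Your proof is correct and follows essentially the same route as the paper's: compute the image of $\psi$ on $\widetilde{\Gamma}$ (using $\psi(Z^{2})=-1$ and the values of $\overline{\varepsilon_{d}}$) to get the index, and use the torsion structure of $Mp_{2}(\mathbb{Z})$ to force $H=\langle Z\rangle$. The only cosmetic difference is in the last step, where you rule out matrices of exact order $4$ in $\Gamma_{0}(4)$ via the congruence $a^{2}\equiv 3\pmod 4$, whereas the paper applies the classification of order-$2$ elements of $SL_{2}(\mathbb{R})$ twice (first to the involution of $H$, then to its generator); both are one-line computations.
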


\begin{proof}
We know that $\ker\psi\subseteq\widetilde{\Gamma}_{1}(4)$ does not contain $Z^{2}$. Moreover, $\psi$ takes the values $\pm1$ on $\widetilde{\Gamma}_{1}(4)$, and $\widetilde{\Gamma}\cap\widetilde{\Gamma}_{1}(4)$ still contain $Z^{2}$. These combine to prove the first assertion. The second one is immediate, since $\widetilde{\Gamma}^{\psi} \subseteq Mp_2(\mathbb{Z})$ is a lift of $\Gamma\cap\Gamma_{1}(4) \subseteq SL_2(\mathbb{Z})$. The existence of the decomposition if $Z\in\widetilde{\Gamma}$ is also clear. Conversely, one easily proves that $\pm I$ are the only two matrices in $SL_{2}(\mathbb{R})$ of order 2. Since $\psi(Z^{\pm1})$ has order 4, it follows that the order 2 element of the complementary subgroup must be $Z^{2}$, and another application of that argument shows that this subgroup, if it exists, must indeed be $\langle Z \rangle$. This proves the lemma.
\end{proof}

\smallskip

Let $\widetilde{\Gamma}$ be a discrete subgroup of $Mp_{2}(\mathbb{R})$, of finite co-volume. Let $\rho:\widetilde{\Gamma} \to GL(V)$ be a representation of $\widetilde{\Gamma}$ on a complex vector space $V$, and let $\kappa\in\frac{1}{2}\mathbb{Z}$ be a weight. For every $A=\big(\alpha,\phi(\tau)\big)\in\widetilde{\Gamma}$ we define the \emph{Petersson slash operator} $\big|_{\kappa,\rho}A$ on functions $f:\mathcal{H} \to V$ by \[\big(f\big|_{\kappa,\rho}{A}\big)(\tau)=\phi(\tau)^{-2\kappa}\rho(A)^{-1}f(\alpha\tau).\] When the weight $\kappa$ is fixed and $\rho$ is trivial, we will omit them from the notation and write $f\big|A$ instead.

\begin{defn}
In the notations above, we say that a real-analytic function $f:\mathcal{H} \to V$ is an \emph{automorphic form of weight $\kappa$ and representation $\rho$ with respect to $\widetilde{\Gamma} \subseteq Mp_{2}(\mathbb{R})$} if the equality $(f\big|_{\kappa,\rho}A)=f$ holds, as functions on $\mathcal{H}$, for all $A\in\widetilde{\Gamma}$. We denote the space of such functions by $\mathcal{A}_{\kappa}(\widetilde{\Gamma},\rho)$. Assuming further that $f$ is holomorphic on $\mathcal{H}$, we call it a \emph{weakly holomorphic modular form} if it is meromorphic at the cusps of $\widetilde{\Gamma}$ (if it has any), and a \emph{modular form} if it is holomorphic at all the cusps. As usual, we use $\mathcal{M}_{\kappa}^{!}(\widetilde{\Gamma},\rho)$ and $\mathcal{M}_{\kappa}(\widetilde{\Gamma},\rho)$ to denote the subspaces of such forms. A modular form vanishing at all the cusps is called a \emph{cusp form} (this is always the case if $\widetilde{\Gamma}$ has no cusps). In case $\rho$ is missing, it should be understood to be 1-dimensional and trivial. \label{autform}
\end{defn}

We remark that the growth conditions at the cusps in Definition \ref{autform} are not necessary for the lifting from scalar-valued to vector-valued modular forms in the following two sections. The vector-valued function $\Theta$ from Equation \eqref{Theta}, for example, belongs to $\mathcal{M}_{1/2}\big(Mp_{2}(\mathbb{Z}),\rho_{1}\big)$. For any representation $\rho:\widetilde{\Gamma} \to GL(V)$, the space $V$ decomposes according to the action of $\widetilde{\Gamma} \cap \langle Z \rangle$ via $\rho$. Investigating the action of $Z$ proves the following
\begin{prop}
The images of functions in $\mathcal{A}_{\kappa}(\widetilde{\Gamma},\rho)$
lie in the subspace of $V$ on which $\widetilde{\Gamma}\cap\langle Z \rangle$ operates via $\psi^{2\kappa}$. \label{imrhoZpsi}
\end{prop}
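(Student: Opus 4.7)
The plan is to evaluate the defining invariance $(f|_{\kappa,\rho}A)=f$ at elements $A\in\widetilde{\Gamma}\cap\langle Z\rangle$ and to exploit the fact that every such $A$ has trivial M\"obius action on $\mathcal{H}$. Under this restriction the slash operator collapses to a pointwise scalar relation $\rho(A)f(\tau)=\phi_{A}(\tau)^{-2\kappa}f(\tau)$ in $V$, so the proposition reduces to identifying the eigenvalues $\phi_{A}(\tau)^{-2\kappa}$ with the character $\psi^{2\kappa}$ restricted to $\widetilde{\Gamma}\cap\langle Z\rangle$.

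The first step would be to compute the four elements of $\langle Z\rangle$ explicitly, using the product rule $(\alpha,\phi)(\beta,\varphi)=(\alpha\beta,\phi(\beta\tau)\varphi(\tau))$ in $Mp_{2}(\mathbb{R})$. A direct induction gives $Z^{n}=\bigl((-I)^{n},i^{n}\bigr)$, so $\alpha\tau=\tau$ and $\phi_{Z^{n}}(\tau)=i^{n}$ is a constant function. Substituting into the invariance condition yields $\rho(Z^{n})f(\tau)=i^{-2n\kappa}f(\tau)$, which already exhibits $f(\tau)$ as a common eigenvector of the finite cyclic group $\widetilde{\Gamma}\cap\langle Z\rangle$.

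The second step would be to match $i^{-2n\kappa}$ with $\psi^{2\kappa}(Z^{n})$. Applying Equation \eqref{rho1Gamma04} to $\alpha=-I$ (so that $c=0$, $d=-1$, and $\varepsilon_{-1}=i$, hence $\overline{\varepsilon_{-1}}=-i$) gives $\psi(Z)=-i$, and therefore $\psi^{2\kappa}(Z^{n})=(-i)^{2n\kappa}$. Since $-i=i^{-1}$ and $2\kappa\in\mathbb{Z}$, this equals $i^{-2n\kappa}$, so the two characters of $\langle Z\rangle$ agree on a generator and thus everywhere. The asserted inclusion of $f(\tau)$ in the $\psi^{2\kappa}$-isotypic subspace of $V$ follows.

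I do not anticipate a serious obstacle: the argument is essentially the bookkeeping of phases in the slash operator, enabled by the triviality of the M\"obius action of $\langle Z\rangle$. The one small point worth noting is that $\psi$ was introduced only on $\widetilde{\Gamma}_{0}(4)$, while $\widetilde{\Gamma}$ in the proposition is an arbitrary discrete subgroup of $Mp_{2}(\mathbb{R})$; this causes no trouble because $\langle Z\rangle\subseteq\widetilde{\Gamma}_{0}(4)$ unconditionally, so the character $\psi^{2\kappa}$ is well-defined on $\widetilde{\Gamma}\cap\langle Z\rangle$ regardless of where the ambient group $\widetilde{\Gamma}$ sits.
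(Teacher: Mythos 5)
Your proof is correct and follows essentially the same route as the paper's: both exploit that elements of $\langle Z\rangle$ act trivially on $\mathcal{H}$, so the invariance under $\big|_{\kappa,\rho}Z^{n}$ forces $f(\tau)$ into the eigenspace of $\rho(Z^{n})$ with eigenvalue $i^{-2n\kappa}=(-i)^{2n\kappa}=\psi^{2\kappa}(Z^{n})$. The only cosmetic difference is that the paper phrases this via projecting onto the eigenspaces of the generator of $\widetilde{\Gamma}\cap\langle Z\rangle$, whereas you derive the pointwise eigenvector relation directly; your explicit check that $\psi^{2\kappa}$ is well defined on $\widetilde{\Gamma}\cap\langle Z\rangle$ for arbitrary $\widetilde{\Gamma}\subseteq Mp_{2}(\mathbb{R})$ is a worthwhile addition.
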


\begin{proof}
We may decompose $V$ to the eigenspaces of the action of the generator $Z^{r}$ of $\widetilde{\Gamma}\cap\langle Z \rangle$, and compose an element of $\mathcal{A}_{\kappa}(\widetilde{\Gamma},\rho)$ with the projection onto one such eigenspace, on which $Z^{r}$ acts as multiplication by some scalar $\varepsilon$ (of order dividing $\frac{4}{(4,r)}$). Then $f$ must be invariant under the slash operator $\big|_{k,\rho}Z^{r}$, which is easily verified to be multiplication by $\frac{i^{2r\kappa}}{\varepsilon}=\frac{\psi(Z^{r})}{\varepsilon}$. For this part of $f$ to be non-zero, this scalar must equal 1. This proves the proposition.
\end{proof}

It follows from Proposition \ref{imrhoZpsi} that if $Z^{2}$ lies in $\widetilde{\Gamma}$ and operates on $V$ as a scalar $\varepsilon\in\{\pm1\}$, then $\mathcal{A}_{\kappa}(\widetilde{\Gamma},\rho)\neq0$ only if $\varepsilon=(-1)^{2\kappa}$. In particular for $\mathcal{A}_{\kappa}(\widetilde{\Gamma},\rho)$ not to vanish for integral $\kappa$ it is necessary for $\rho$ to factor through the image $\Gamma \subseteq SL_{2}(\mathbb{R})$. In this case $\mathcal{A}_{\kappa}(\widetilde{\Gamma},\rho)$ is the same as the more classical space $\mathcal{A}_{\kappa}(\Gamma,\rho)$ (not involving metaplectic groups). On the other hand, assume that $\widetilde{\Gamma} \subseteq Mp_{2}(\mathbb{R})$ and $\kappa\in\frac{1}{2}\mathbb{Z}$ are given such that $\mathcal{A}_{\kappa}(\widetilde{\Gamma})\neq0$ (with trivial $\rho$). Proposition \ref{imrhoZpsi} implies that in this case we have the equality
\begin{equation}
\mathcal{A}_{\kappa}(\widetilde{\Gamma})=\mathcal{A}_{\kappa}\big(\langle\widetilde{\Gamma},Z\rangle,\psi^{2\kappa}\big)
=\mathcal{A}_{\kappa}\big(\langle\widetilde{\Gamma},Z^{2}\rangle,\psi^{2\kappa}\big). \label{Zpsi}
\end{equation}
Here $\langle\widetilde{\Gamma},Z^{r}\rangle$ stands for the subgroup of $Mp_{2}(\mathbb{R})$ which is generated by $\widetilde{\Gamma}$ and $Z^{r}$. We shall be applying Equation \eqref{Zpsi} only to subgroups of $\widetilde{\Gamma}_{0}^{\psi}(4)$ in this paper.

\smallskip

It is well-known that for integral $\kappa$, the space
$\mathcal{A}_{\kappa}\big(\Gamma_{1}(N)\big)$ decomposes with respect to the
action of $\Gamma_{0}(N)$ according to those characters $\chi$ of $\Gamma_{0}(N)/\Gamma_{1}(N)\cong(\mathbb{Z}/N\mathbb{Z})^{\times}$ satisfying $\chi(-1)=(-1)^{\kappa}$. The same assertion holds for $\mathcal{A}_{\kappa}\big(\Gamma_{1}(N),\rho\big)$ wherever $\rho$ is a representation of $\Gamma_{0}(N)$. The analogous assertion for $\kappa\in\frac{1}{2}\mathbb{Z}$ is as follows.
\begin{prop}
Let $\kappa\in\frac{1}{2}\mathbb{Z}$ be general. We then have
\[\mathcal{A}_{\kappa}\big(\widetilde{\Gamma}_{1}^{\psi}(4N)\big)=\bigoplus_{
\omega}\mathcal{A}_{\kappa}(\widetilde{\Gamma}_{0}(4N),\psi^{2\kappa}\omega),\]
where the direct sum is taken over the \textbf{even} Dirichlet characters $\omega$ modulo $4N$. \label{Z/2char}
\end{prop}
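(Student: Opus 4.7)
The plan is to view $\mathcal{A}_{\kappa}(\widetilde{\Gamma}_{1}^{\psi}(4N))$ as a representation of the finite quotient $G := \widetilde{\Gamma}_{0}(4N)/\widetilde{\Gamma}_{1}^{\psi}(4N)$ under the Petersson slash operator and decompose it into character eigenspaces. The main obstacle is to show that the characters of $G$ that contribute nontrivially are exactly the twists of $\psi^{2\kappa}$ by the even Dirichlet characters modulo $4N$; this requires combining the structure of the character group of $G$ with the necessary condition supplied by Proposition \ref{imrhoZpsi}.

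First I would verify that $\widetilde{\Gamma}_{1}^{\psi}(4N)$ is normal in $\widetilde{\Gamma}_{0}(4N)$, being the intersection of the lift $\widetilde{\Gamma}_{1}(4N)$ of the normal subgroup $\Gamma_{1}(4N) \triangleleft \Gamma_{0}(4N)$ with the kernel $\ker\psi$ of a character of $\widetilde{\Gamma}_{0}(4)$. The quotient $G$ is abelian, because any commutator in $\widetilde{\Gamma}_{0}(4N)$ maps to $1$ in $(\mathbb{Z}/4N\mathbb{Z})^{\times}$ and is annihilated by the character $\psi$, hence lies in $\widetilde{\Gamma}_{1}^{\psi}(4N)$. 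Invoking Lemma \ref{Gammapsiind} for $\widetilde{\Gamma}_{1}(4N)$ yields a short exact sequence $1 \to \langle Z^{2}\rangle \to G \to (\mathbb{Z}/4N\mathbb{Z})^{\times} \to 1$, so that $|G| = 2\phi(4N)$. Since $G$ is finite abelian, the space $\mathcal{A}_{\kappa}(\widetilde{\Gamma}_{1}^{\psi}(4N))$ decomposes as $\bigoplus_{\chi} \mathcal{A}_{\kappa}(\widetilde{\Gamma}_{0}(4N), \chi)$, with $\chi$ ranging over the characters of $G$ (equivalently, the characters of $\widetilde{\Gamma}_{0}(4N)$ trivial on $\widetilde{\Gamma}_{1}^{\psi}(4N)$).

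Finally I would pin down which $\chi$ yield nonzero summands. Since $\langle Z \rangle \subseteq \widetilde{\Gamma}_{0}(4N)$, Proposition \ref{imrhoZpsi} forces $\chi(Z) = \psi^{2\kappa}(Z)$ whenever $\mathcal{A}_{\kappa}(\widetilde{\Gamma}_{0}(4N), \chi)$ is nonzero. For such $\chi$, the ratio $\omega := \chi \cdot \psi^{-2\kappa}$ is a character of $G$ with $\omega(Z) = 1$, so in particular $\omega(Z^{2}) = 1$; consequently $\omega$ factors through $G/\langle Z^{2}\rangle \cong \widetilde{\Gamma}_{0}(4N)/\widetilde{\Gamma}_{1}(4N) \cong (\mathbb{Z}/4N\mathbb{Z})^{\times}$, and is therefore a Dirichlet character modulo $4N$. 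Since $Z$ lies over $-I$, the condition $\omega(Z) = 1$ reads $\omega(-1) = 1$, so $\omega$ is even. Conversely, for every even Dirichlet character $\omega$ modulo $4N$ the product $\psi^{2\kappa}\omega$ is a character of $\widetilde{\Gamma}_{0}(4N)$ that is trivial on $\widetilde{\Gamma}_{1}^{\psi}(4N)$ and satisfies $(\psi^{2\kappa}\omega)(Z) = \psi^{2\kappa}(Z)$, establishing the desired bijection and hence the claimed direct sum decomposition.
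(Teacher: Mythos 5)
Your argument is correct and is essentially the paper's proof: the paper first replaces $\mathcal{A}_{\kappa}(\widetilde{\Gamma}_{1}^{\psi}(4N))$ by $\mathcal{A}_{\kappa}(\langle\widetilde{\Gamma}_{1}(4N),Z\rangle,\psi^{2\kappa})$ via Equation \eqref{Zpsi} (which packages the $Z$-eigenvalue constraint of Proposition \ref{imrhoZpsi}) and then decomposes under $\widetilde{\Gamma}_{0}(4N)/\langle\widetilde{\Gamma}_{1}(4N),Z\rangle\cong(\mathbb{Z}/4N\mathbb{Z})^{\times}/\{\pm1\}$, whereas you decompose under the full quotient $\widetilde{\Gamma}_{0}(4N)/\widetilde{\Gamma}_{1}^{\psi}(4N)$ first and only afterwards invoke Proposition \ref{imrhoZpsi} to discard the characters with the wrong value on $Z$. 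The two orderings are interchangeable and lead to the same identification of the surviving characters with $\psi^{2\kappa}\omega$ for even Dirichlet characters $\omega$ modulo $4N$.
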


\begin{proof}
By Equation \eqref{Zpsi}, the left hand side is just $\mathcal{A}_{\kappa}\big(\langle \widetilde{\Gamma}_{1}(4N), Z \rangle,\psi_{}^{2\kappa}\big)$. Under the action of $\widetilde{\Gamma}_{0}(4N)$, this space decomposes according to characters of \[\widetilde{\Gamma}_{0}(4N)/\langle\widetilde{\Gamma}_{1}(4N),Z\rangle\cong\Gamma_{0}(4N)/\langle\pm I,\Gamma_{1}(4N)\rangle\cong(\mathbb{Z}/4N\mathbb{Z})^{\times}/\{\pm1\},\] which are exactly the even Dirichlet characters modulo $4N$. This proves the proposition.
\end{proof}

\smallskip

For a character $\chi$ modulo $N$, let $\omega_{\chi}$ be the character modulo $4N$ defined by
\begin{equation}
\omega_{\chi}(d):=\bigg(\frac{4\chi(-1)}{d}\bigg)\chi(d). \label{omegachi}
\end{equation}
They are important for analyzing automorphic forms with respect to the group $\widetilde{\Gamma}_{N}$ from Equation \eqref{GammaN}, as one sees in the following lemma.
\begin{lem}
The projection of $\widetilde{\Gamma}_{N}^{\psi}$ to $SL_{2}(\mathbb{Z})$ is $\Gamma_{N}$ if $4|N$ and $\Gamma_{0}(4N) \cap \Gamma_{1}(4N_{2})$ if $4 \nmid N$. Here $N_{2}$ be the odd part of $N$. Furthermore, the characters $\big\{\omega_{\chi}\mid\chi\mathrm{\ is\ a\ character\ modulo\ }N\big\}$ are exactly the even characters of $\widetilde{\Gamma}_{0}(4N)/\widetilde{\Gamma}_{1}(4N)$ that factor through $\widetilde{\Gamma}_{N}^{\psi}$. The choice of $\chi$ such that $\omega=\omega_{\chi}$ is unique if $4 \nmid N$, but if $4|N$ there are two choices with different parities. \label{charGammaN}
\end{lem}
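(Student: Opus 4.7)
The plan is to handle the three assertions in order, using the explicit description of $\psi$ from Equation \eqref{rho1Gamma04} together with a short counting argument.

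First, for the projection of $\widetilde{\Gamma}_N^\psi$, I would observe that for $\alpha = \big(\begin{smallmatrix} a & b \\ c & d \end{smallmatrix}\big) \in \Gamma_0(4)$ the two lifts to $Mp_2(\mathbb{Z})$ carry $\psi$-values $\pm\big(\frac{c}{d}\big)\overline{\varepsilon_d}$, which lie in $\{\pm 1\}$ precisely when $\varepsilon_d$ is real, i.e.\ when $d \equiv 1 \pmod 4$; in that case exactly one of the two lifts is in $\ker\psi$. Hence the projection of $\widetilde{\Gamma}_N^\psi$ consists of those $\alpha \in \Gamma_N$ satisfying the extra congruence $d \equiv 1 \pmod 4$. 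If $4 \mid N$ this is automatic from $d \equiv 1 \pmod N$, so the projection is $\Gamma_N$. Otherwise, combining $d \equiv 1 \pmod 4$ with $d \equiv 1 \pmod N$ via CRT (using $\gcd(4, N_2) = 1$), and using $ad - bc = 1$ with $4 \mid c$ to propagate the congruence to $a$, forces $a \equiv d \equiv 1 \pmod{4N_2}$, while $c \equiv 0 \pmod{4N}$ is retained; this is exactly $\Gamma_0(4N) \cap \Gamma_1(4N_2)$.

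For the characterization of $\{\omega_\chi\}$, I would verify three steps. (i) $\omega_\chi$ is a well-defined character mod $4N$: the Kronecker factor $\big(\frac{4\chi(-1)}{\cdot}\big)$ has conductor dividing $4$, and its product with $\chi$ has conductor dividing $4N$. It is even since $\omega_\chi(-1) = \mathrm{sign}\big(4\chi(-1)\big) \cdot \chi(-1) = \chi(-1)^2 = 1$. (ii) $\omega_\chi$ is trivial on the image of $\widetilde{\Gamma}_N^\psi$: any $d$ in that image satisfies $d \equiv 1 \pmod 4$ (so $\big(\frac{-1}{d}\big) = 1$ and hence $\big(\frac{4\chi(-1)}{d}\big) = 1$) and $d \equiv 1 \pmod N$ (so $\chi(d) = 1$). (iii) A counting argument seals the exhaustion claim: the characters of $(\mathbb{Z}/4N\mathbb{Z})^\times$ trivial on the image of the projection are the characters of an explicit quotient, which is $(\mathbb{Z}/N\mathbb{Z})^\times$ when $4 \mid N$ and a group of order $2\varphi(N)$ in the remaining cases; halving for the evenness condition yields totals $\varphi(N)/2$ and $\varphi(N)$ respectively.

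The third claim is then obtained by analyzing when $\omega_{\chi_1} = \omega_{\chi_2}$: this is equivalent to $\chi_1 \chi_2^{-1}$ being either trivial or the Kronecker character $\big(\frac{-1}{\cdot}\big)$, depending on whether $\chi_1$ and $\chi_2$ share or differ in parity. Since $\big(\frac{-1}{\cdot}\big)$ has conductor $4$, it descends to a character mod $N$ precisely when $4 \mid N$, and $\big(\frac{-1}{-1}\big) = -1$ forces the partner $\chi$ to have opposite parity. Thus $\chi \mapsto \omega_\chi$ is injective when $4 \nmid N$ and exactly $2$-to-$1$ (pairing opposite parities) when $4 \mid N$. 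Comparing the resulting image sizes $\varphi(N)$ or $\varphi(N)/2$ with the counts in (iii) simultaneously establishes that the $\omega_\chi$'s exhaust the prescribed set of characters.

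The main obstacle is the careful case analysis in the mixed situation $N = 2N_2$, where the mod-$4$ constraint coming from $\varepsilon_d$ interacts non-trivially with the mod-$N$ constraint from $\Gamma_1(N)$ to produce the somewhat unexpected congruence $d \equiv 1 \pmod{4N_2}$, and then must be reconciled with the conductor of $\big(\frac{-1}{\cdot}\big)$ in the counting step.
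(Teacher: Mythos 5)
Your proof is correct, but it takes a genuinely different route for the middle (exhaustion) claim. For the first assertion, your explicit derivation of the condition $d\equiv1\pmod{4}$ from the formula $\psi=\pm\big(\tfrac{c}{d}\big)\overline{\varepsilon_{d}}$ of Equation \eqref{rho1Gamma04} just re-proves the fact, stated before Lemma \ref{Gammapsiind}, that $\ker\psi$ is a lift of $\Gamma_{1}(4)$; the paper instead cites Lemma \ref{Gammapsiind} to get that the projection is $\Gamma_{N}\cap\Gamma_{1}(4)$ and then does the same congruence bookkeeping you do to identify this with $\Gamma_{0}(4N)\cap\Gamma_{1}(4N_{2})$, so this part is essentially the same argument made self-contained. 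The real divergence is in showing that the $\omega_{\chi}$ exhaust the even characters factoring through $\widetilde{\Gamma}_{N}^{\psi}$: the paper argues constructively, exhibiting the two preimages $\chi=\omega$ and $\chi=\omega\cdot\big(\tfrac{-1}{\cdot}\big)$ when $4|N$, and when $4\nmid N$ factoring $\omega=\omega_{2}\cdot\chi$ with $\omega_{2}$ modulo $8$ and $\chi$ modulo $N_{2}$ and showing that factoring through $\Gamma_{0}(4N)\cap\Gamma_{1}(4N_{2})$ forces the conductor of $\omega_{2}$ to divide $4$, whence $\omega=\omega_{\chi}$. You instead verify that each $\omega_{\chi}$ lands in the target set and match cardinalities, which obliges you to settle the fibre sizes (your analysis of $\omega_{\chi_{1}}=\omega_{\chi_{2}}$ via the conductor of $\big(\tfrac{-1}{\cdot}\big)$, which agrees with the paper's) \emph{before} concluding exhaustion, reversing the paper's logical order. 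Your route avoids the $2$-adic conductor case analysis but is non-constructive, and it silently uses two routine facts that a complete write-up should record: every residue $d\equiv1\pmod{N}$ (resp.\ $d\equiv1\pmod{4N_{2}}$) coprime to $4N$ actually occurs as a lower-right entry of the projected group, and $-1$ does not lie in the corresponding subgroup of $(\mathbb{Z}/4N\mathbb{Z})^{\times}$, so that ``halving for evenness'' is exact.
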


\begin{proof}
If $4|N$ then $\Gamma_{N}\subseteq\Gamma_{1}(N)\subset\Gamma_{1}(4)$ and Lemma \ref{Gammapsiind} shows that $\widetilde{\Gamma}_{N}^{\psi}$ is an isomorphic lift of $\Gamma_{N}$ in $Mp_{2}(\mathbb{Z})$. When $4 \nmid N$, Lemma \ref{Gammapsiind} implies that $\widetilde{\Gamma}_{N}^{\psi}$ lies over $\Gamma_{N}\cap\Gamma_{1}(4)$, which has index 2 in $\Gamma_{N}$. Now, an element of ${\Gamma}_{0}(4N)\cap{\Gamma}_{1}(N_{2})$ immediately lies in ${\Gamma}_{1}(2N_{2})$, and it lies in ${\Gamma}_{1}(4)$ if and only if it lies in ${\Gamma}_{1}(4N_{2})$. This proves the first assertion.

Now, any even character $\omega$ modulo $4N$ factors through $\widetilde{\Gamma}_{N}^{\psi}$ if and only if it factors through the projection of $\widetilde{\Gamma}_{N}^{\psi}$ to $SL_{2}(\mathbb{Z})$. In case $4|N$ this means that $\omega$ factors through $\Gamma_{N}$, i.e. it is a character modulo $N$. We can then write $\omega$ as either $\omega_{\omega}$ (this means $\omega_{\chi}$ with $\chi$ being the even character $\omega$ modulo $N$), or as $\omega_{\chi}$ for the (odd) character $\chi$ modulo $N$ which is defined by $\chi(d)=\omega(d)\big(\frac{-1}{d}\big)$. When $4 \nmid N$, it is clear that $\omega_{\chi}$ is an even character that factors through $\Gamma_{0}(4N) \cap \Gamma_{1}(4N_{2})$. On the other hand, any even character $\omega$ modulo $4N$ can be written as $\omega_{2} \cdot \chi$ with $\chi$ modulo $N_{2}$ and $\omega_{2}$ modulo 8. Then $\omega$ factors through $\Gamma_{0}(4N) \cap \Gamma_{1}(4N_{2})$ if and only if $\omega_{2}$ has conductor $4$ or 1, in which cases the evenness of $\omega$ implies that $\omega=\omega_{\chi}$ for this unique $\chi$. This completes the proof of the lemma.
\end{proof}

We can now decompose the space of automorphic forms with respect to $\widetilde{\Gamma}_{N}^{\psi}$.

\begin{prop}
Given any $N\in\mathbb{N}$, the equality
\[\mathcal{A}_{\kappa}(\widetilde{\Gamma}_{N}^{\psi})=
\begin{cases}
\mathcal{A}_{\kappa}(\widetilde{\Gamma}_{N},\psi^{2\kappa})\oplus\mathcal{A}_{\kappa}(\widetilde{\Gamma}_{N},\psi^{2\kappa-2}),  & 4 \nmid N, \\
\mathcal{A}_{\kappa}(\widetilde{\Gamma}_{N},\psi^{2\kappa})=\mathcal{A}_{\kappa}(\widetilde{\Gamma}_{N},\psi^{2\kappa-2}), & 4|N \end{cases}\]
holds, where for $\xi\in\{\pm1\}$ we have
\[\mathcal{A}_{\kappa}(\widetilde{\Gamma}_{N},\psi^{2\kappa+\xi-1})=\bigoplus_{\chi(-1)=\xi}
\mathcal{A}_{\kappa}(\widetilde{\Gamma}_{0}(4N),\psi^{2\kappa}\omega_{\chi}).\]
Here $\chi$ in the direct sum is understood to be a character modulo $N$, with the required parity. \label{char4notdivN}
\end{prop}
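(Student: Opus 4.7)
The plan is to pull back the decomposition of Proposition \ref{Z/2char} from $\widetilde{\Gamma}_1^\psi(4N)$ to the larger group $\widetilde{\Gamma}_N^\psi$, use Lemma \ref{charGammaN} to identify which characters survive, and then separate the surviving $\chi$'s by parity; the two cases $4 \mid N$ and $4 \nmid N$ will behave differently because $\widetilde{\Gamma}_N \subseteq \widetilde{\Gamma}_1(4)$ if and only if $4 \mid N$.

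First I would use $\widetilde{\Gamma}_1^\psi(4N) \subseteq \widetilde{\Gamma}_N^\psi$ together with Proposition \ref{Z/2char} to realize $\mathcal{A}_\kappa(\widetilde{\Gamma}_N^\psi)$ as the subspace of $\bigoplus_\omega \mathcal{A}_\kappa(\widetilde{\Gamma}_0(4N),\psi^{2\kappa}\omega)$ (over even Dirichlet characters $\omega$ modulo $4N$) on which $\widetilde{\Gamma}_N^\psi$ acts trivially. Since $\psi$ vanishes on $\widetilde{\Gamma}_N^\psi$, this selects exactly the $\omega$'s factoring through $\widetilde{\Gamma}_N^\psi$, which by Lemma \ref{charGammaN} are precisely the $\omega_\chi$'s. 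Hence
\[\mathcal{A}_\kappa(\widetilde{\Gamma}_N^\psi) = \bigoplus_{\omega_\chi} \mathcal{A}_\kappa(\widetilde{\Gamma}_0(4N),\psi^{2\kappa}\omega_\chi),\]
where each distinct character $\omega_\chi$ is counted once.

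The step I expect to be the main obstacle is a short character identity on $\widetilde{\Gamma}_0(4N)$: combining Equation \eqref{omegachi} with $\psi^2(d) = \overline{\varepsilon_d}^2 = \bigl(\tfrac{-1}{d}\bigr)$ from Equation \eqref{rho1Gamma04}, and using that $\psi$ takes values in the fourth roots of unity so $\psi^4 = 1$, one finds $\omega_\chi = \chi$ when $\chi$ is even and $\omega_\chi = \psi^2\chi$ when $\chi$ is odd (viewed as characters of $\widetilde{\Gamma}_0(4N)$). Since $\chi$ is trivial on $\widetilde{\Gamma}_N \subseteq \widetilde{\Gamma}_1(N)$, this shows $\psi^{2\kappa}\omega_\chi|_{\widetilde{\Gamma}_N} = \psi^{2\kappa+\chi(-1)-1}|_{\widetilde{\Gamma}_N}$, so the summand indexed by $\chi$ sits in $\mathcal{A}_\kappa(\widetilde{\Gamma}_N,\psi^{2\kappa+\xi-1})$ for $\xi = \chi(-1)$, which is the desired inner decomposition.

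Finally I would split on the divisibility of $N$ by $4$. For $4 \nmid N$, the map $\chi \leftrightarrow \omega_\chi$ is a bijection (Lemma \ref{charGammaN}) and $\psi^2 \not\equiv 1$ on $\widetilde{\Gamma}_N$, so the even-$\chi$ and odd-$\chi$ pieces are distinguished by their character on $\widetilde{\Gamma}_N$ and assemble into the direct sum $\mathcal{A}_\kappa(\widetilde{\Gamma}_N,\psi^{2\kappa}) \oplus \mathcal{A}_\kappa(\widetilde{\Gamma}_N,\psi^{2\kappa-2})$ with the claimed inner decompositions. For $4 \mid N$ we have $\widetilde{\Gamma}_N \subseteq \widetilde{\Gamma}_1(4)$ so $\psi^2 \equiv 1$ on $\widetilde{\Gamma}_N$ and the two target characters coincide there; by the last clause of Lemma \ref{charGammaN} each $\omega_\chi$ is hit by exactly one even and one odd $\chi$, so summing over either parity already exhausts all $\omega_\chi$'s and produces $\mathcal{A}_\kappa(\widetilde{\Gamma}_N,\psi^{2\kappa}) = \mathcal{A}_\kappa(\widetilde{\Gamma}_N,\psi^{2\kappa-2}) = \mathcal{A}_\kappa(\widetilde{\Gamma}_N^\psi)$.
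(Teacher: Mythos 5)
Your proof is correct and follows essentially the same route as the paper: both arguments rest on Proposition \ref{Z/2char} and Lemma \ref{charGammaN} to cut the decomposition of $\mathcal{A}_{\kappa}(\widetilde{\Gamma}_{N}^{\psi})$ down to the characters $\omega_{\chi}$, and both sort the summands by the parity of $\chi$ via the coincidence of $\psi^{2\kappa}\omega_{\chi}$ with $\psi^{2\kappa+\chi(-1)-1}$ on $\widetilde{\Gamma}_{N}$. The only organizational difference is that the paper first establishes the coarse two-piece decomposition by a direct index computation, exhibiting an explicit matrix $\alpha_{N}\in\Gamma_{N}\setminus\Gamma_{1}(4)$ to show that $\psi^{2}$ is nontrivial on $\widetilde{\Gamma}_{N}$ when $4 \nmid N$ (a point you assert without producing a witness) and then checks the character identity only on that single separating element, whereas you verify the identity globally via $\psi^{2}=\bigl(\tfrac{-1}{d}\bigr)$ and recover the coarse decomposition by regrouping the fine one.
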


\begin{proof}
If $4|N$ then $\widetilde{\Gamma}_{N}=\widetilde{\Gamma}_{N}^{\psi}\times\langle Z^{2} \rangle\subseteq\widetilde{\Gamma}_{1}(4)$ by Lemmas \ref{charGammaN} and \ref{Gammapsiind}. Hence $\mathcal{A}_\kappa(\widetilde{\Gamma}_{N}^{\psi})=\mathcal{A}_\kappa(\widetilde{\Gamma}_{N} ,\psi^{2\kappa})$ by Equation \eqref{Zpsi}, and we may replace $\psi^{2\kappa}$ by $\psi^{2\kappa-2}$ since $\psi^{2}$ is trivial on the subgroup $\widetilde{\Gamma}_{N}$ of $\widetilde{\Gamma}_{1}(4)$. On the other hand, when $4 \nmid N$ the group $\widetilde{\Gamma}_{N}$ is not contained in $\widetilde{\Gamma}_{1}(4)$ since the matrix $\alpha_{N}:=\big(\begin{smallmatrix} 1+2N_{2} & * \\ 4N & 1+2N_{2}\end{smallmatrix}\big)$ lies in $\Gamma_{N}\setminus\Gamma_{1}(4)$. Hence we have $[\widetilde{\Gamma}_{N}:\widetilde{\Gamma}_{N}^{\psi}]=4$ by Lemma \ref{Gammapsiind}, and $\psi^{2}$ is non-trivial on $\widetilde{\Gamma}_{N}$ in this case (indeed, it takes the value $-1$ on the metaplectic pre-images of $\alpha_{N}$). The spaces with the different $\xi$s are therefore disjoint, and are separated by the action of the generator $\alpha_{N}$ of $\widetilde{\Gamma}_{N}/\langle\widetilde{\Gamma}_{N}^{\psi},Z^{2}\rangle$. This proves the first equation.

We now apply Proposition \ref{Z/2char} and the second assertion of Lemma \ref{charGammaN} in order to decompose $\mathcal{A}_{\kappa}(\widetilde{\Gamma}_{N}^{\psi})$ according to the even characters modulo $4N$ which take the form $\omega_{\chi}$ for some character $\chi$ modulo $N$. If $4|N$ then the second equality follows immediately from the third assertion of Lemma \ref{charGammaN}. On the other hand, if $4 \nmid N$ then this assertion shows that the direct sum over $\omega_{\chi}$ is the direct sum over $\chi$. Now, the value of $\xi$ such that $\mathcal{A}_{\kappa}(\widetilde{\Gamma}_{0}(4N),\psi^{2\kappa}\omega_{\chi})$ is contained in $\mathcal{A}_{\kappa}(\widetilde{\Gamma}_{N},\psi^{2\kappa+\xi-1})$ is determined by the action of our matrix $\alpha_{N}$. But any character $\chi$ modulo $N$ is trivial on our matrix $\alpha_{N}$, while the other factor appearing in the definition of $\omega_{\chi}$ in Equation \eqref{omegachi} takes this matrix to $\chi(-1)$. Therefore the character $\psi^{2\kappa}\omega_{\chi}$ coincides with $\psi^{2\kappa+\chi(-1)-1}$ on $\alpha_{N}$, showing that indeed for each character $\chi$ we have to take $\xi=\chi(-1)$. This completes the proof of the proposition.
\end{proof}

The fact that in the case $4|N$ the same space can be described using both values of $\xi$, a fact which may seem a bit cumbersome at this point, will turn out very useful below.

\begin{rmk}
If $N$ is 1 or 2 then $Z\in\widetilde{\Gamma}_{N}$, making $\mathcal{A}_\kappa(\widetilde{\Gamma}_{N},\psi^{2\kappa-2})$ the trivial subspace. This agrees with the fact that there are no odd characters with these moduli. Lemma \ref{Gammapsiind} shows that these are also the only cases where $\widetilde{\Gamma}_{N}$ contains $\widetilde{\Gamma}_{N}^{\psi}$ as a direct factor of index 4. \label{SVtriv}
\end{rmk}

\section{Liftings to a 2-Dimensional Representation \label{2dLift}}

Let $N$ and $k$ be positive integers and $\xi\in\{\pm1\}$. Inside $\mathcal{A}_{k+1/2}(\widetilde{\Gamma}_{N},\psi^{2k+\xi})$ we consider the ``plus-space'' \[\mathcal{A}^{+}_{k+1/2}(\widetilde{\Gamma}_{N},\psi^{2k+\xi}):=\bigg\{\sum_{n\in\mathbb{Z}} c_{n}(y)q^{n}\in\mathcal{A}_{k+1/2}(\widetilde{\Gamma}_{N},\psi^{2k+\xi})\bigg|c_{n}(y)=0\mathrm{\ unless\ }(-1)^{k}\xi n\equiv 0,1(\mathrm{mod\ }4)\bigg\}.\] In addition, given a character $\chi$ modulo $N$ satisfying $\chi(-1)=\xi$ we define
\[\mathcal{A}_{k+1/2}^{+}\big(\widetilde{\Gamma}_{0}(4N),\psi^{2k+1}\omega_{\chi}\big):=\mathcal{A}_{k+1/2}
\big(\widetilde{\Gamma}_{0}(4N),\psi^{2k+1}\omega_{\chi}\big)\cap\mathcal{A}_{k+1/2}^{+}(\widetilde{\Gamma}_{N},\psi^{2k+\xi}).\]
Similarly, adding the superscript ``+'' to any of the spaces $\mathcal{M}_{k+1/2}^{!}(\widetilde{\Gamma}_{N},\psi^{2k+\xi})$, $\mathcal{M}_{k+1/2}(\widetilde{\Gamma}_{N},\psi^{2k+\xi})$, or $\mathcal{M}_{k+1/2}(\widetilde{\Gamma}_{0}(4N),\psi^{2k+1}\omega_{\chi})$ denotes its intersection with $\mathcal{A}^{+}_{k+1/2}(\widetilde{\Gamma}_{N},\psi^{2k+\xi})$.

For each $f(\tau)=\sum_{n\in\mathbb{Z}}c_{n}(y)q^{n}\in\mathcal{A}_{k+1/2}(\widetilde{\Gamma}_{N},\psi^{2k+\xi})$ and any $j$ modulo 4 we define
\begin{equation}
f_{j}(\tau):=\sum_{n \equiv j(\mathrm{mod\ }4)}c_{n}\big(\tfrac{y}{4}\big)q^{n/4}. \label{fjdef}
\end{equation}
We now prove
\begin{lem}
The modular form $f$ and the functions $f_{j}$ from Equation \eqref{fjdef} are related through
\[f(\tau)=\sum_{j\in\mathbb{Z}/4\mathbb{Z}}f_{j}(4\tau),\quad
\frac{f\big(\frac{\tau}{4}\big)+f\big(\frac{\tau+2}{4}\big)}{2}=f_{0}(\tau)+f_{2}(\tau),\quad\mathrm{and}\quad
\frac{f\big(\frac{\tau}{4}\big)-f\big(\frac{\tau+2}{4}\big)}{2}=f_{1}(\tau)+f_{-1}(\tau).\] If $f(\tau)$ is in the plus-space $\mathcal{A}^{+}_{k+1/2}(\widetilde{\Gamma}_{N},\psi^{2k+\xi})$ then the right hand sides of the latter equation become just $f_{0}(4\tau)+f_{\varepsilon}(4\tau)$, $f_{0}(\tau)$, and $f_{\varepsilon}(\tau)$ respectively, where $\varepsilon=(-1)^{k}\xi$. \label{cnmod4}
\end{lem}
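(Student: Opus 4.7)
The proof is essentially a direct unpacking of the definitions, combined with the elementary observation that $\frac{1+(-1)^n}{2}$ and $\frac{1-(-1)^n}{2}$ project onto even and odd residues. I would proceed as follows.

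First I would expand $f_j(4\tau)$. Since $\mathrm{Im}(4\tau)=4y$, substituting into the definition \eqref{fjdef} gives
\[f_{j}(4\tau)=\sum_{n\equiv j(\mathrm{mod\ }4)}c_{n}\Big(\tfrac{4y}{4}\Big)\mathbf{e}(4\tau)^{n/4}=\sum_{n\equiv j(\mathrm{mod\ }4)}c_{n}(y)q^{n}.\]
Summing over $j\in\mathbb{Z}/4\mathbb{Z}$ partitions the full Fourier expansion of $f$, yielding the first identity $f(\tau)=\sum_{j}f_{j}(4\tau)$.

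Next I would compute $f(\tau/4)$ and $f((\tau+2)/4)$ directly from the Fourier expansion. Using $\mathrm{Im}(\tau/4)=y/4$, we obtain
\[f\Big(\tfrac{\tau}{4}\Big)=\sum_{n\in\mathbb{Z}}c_{n}(y/4)q^{n/4},\qquad f\Big(\tfrac{\tau+2}{4}\Big)=\sum_{n\in\mathbb{Z}}c_{n}(y/4)q^{n/4}\mathbf{e}\Big(\tfrac{n}{2}\Big)=\sum_{n\in\mathbb{Z}}(-1)^{n}c_{n}(y/4)q^{n/4}.\]
Taking the half-sum kills the terms with odd $n$ and leaves precisely $\sum_{n\equiv0(2)}c_{n}(y/4)q^{n/4}=f_{0}(\tau)+f_{2}(\tau)$; the half-difference kills the even $n$ and leaves $f_{1}(\tau)+f_{-1}(\tau)$. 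This establishes the second and third identities.

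For the plus-space refinement, the condition $c_{n}=0$ unless $(-1)^{k}\xi n\equiv 0,1(\mathrm{mod\ }4)$ is precisely the statement that, setting $\varepsilon=(-1)^{k}\xi\in\{\pm1\}$, the only non-vanishing residue classes of $n$ modulo $4$ are $n\equiv 0$ and $n\equiv\varepsilon$. Hence $f_{2}\equiv 0$, and exactly one of $f_{1},f_{-1}$ survives, namely $f_{\varepsilon}$. Substituting these vanishings into the three identities already proved immediately yields the reduced right-hand sides $f_{0}(4\tau)+f_{\varepsilon}(4\tau)$, $f_{0}(\tau)$, and $f_{\varepsilon}(\tau)$. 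There is no real obstacle here; the only point requiring a moment of care is tracking the shift $y\mapsto y/4$ in the Fourier coefficients so that the $q$-series manipulations remain valid for the real-analytic (not necessarily holomorphic) case allowed by Definition \ref{autform}.
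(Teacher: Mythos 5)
Your proof is correct and is precisely the straightforward calculation that the paper leaves to the reader (its entire proof reads ``The proof is a straightforward calculation''). All steps check out, including the careful tracking of $y\mapsto y/4$ in the real-analytic coefficients and the identification of the surviving residue classes $n\equiv 0,\varepsilon\pmod 4$ in the plus-space case.
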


\begin{proof}
The proof is a straightforward calculation.
\end{proof}

\begin{rmk}
With our value of $\varepsilon$ we can also write $\psi^{2k+\xi}$ as $\psi^{\varepsilon}$, since $\psi$ has order 4. \label{psi2k+xieps}
\end{rmk}

In case $4 \nmid N$ the space $\mathcal{A}^{+}_{k+1/2}(\widetilde{\Gamma}_{N},\psi^{2k+\xi})$ (or more precisely its subspaces $\mathcal{A}^{+}_{k+1/2}(\widetilde{\Gamma}_{0}(4N),\psi^{2k+1}\omega_{\chi})$)
is known, at least for $\xi=(-1)^{k}$ (i.e., $\varepsilon=1$), as Kohnen's plus-space. It has been studied in \cite{[K2]} and \cite{[UY]}, from which we know that there are appropriate projection operators from $\mathcal{A}_{k+1/2}$ to $\mathcal{A}^{+}_{k+1/2}$. These projection operators are essentially based on the matrix $T^{1/4}=\big(\begin{smallmatrix} 1 & 1/4 \\ 0 & 1\end{smallmatrix}\big)$. When $4|N$ the situation is, in fact, much simpler, as one sees in the following
\begin{prop}
Suppose $4|N$. Choose any $\xi\in\{\pm1\}$ and $f\in\mathcal{A}_{k+1/2}(\widetilde{\Gamma}_{N}^{\psi})$, and set $\varepsilon=(-1)^{k}\xi$. Then the function taking $\tau\in\mathcal{H}$ to $f_{0}(4\tau)+f_{\varepsilon}(4\tau)$ lies in $\mathcal{A}^{+}_{k+1/2}(\widetilde{\Gamma}_{N},\psi^{2k+\xi})$, and is the image of $f$ under an appropriate projection operator $P_{\varepsilon}$.
A similar projection $P_{2}$ given by $P_{2}f(\tau):=f_{0}(4\tau)+f_{2}(4\tau)$
also preserves the space $\mathcal{A}_{k+1/2}(\widetilde{\Gamma}_{N}^{\psi})$. \label{proj+4divN}
\end{prop}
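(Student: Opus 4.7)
The plan is to realize $P_{2}$ and $P_{\varepsilon}$ as finite averages of the slash operator by the quarter-period translates $T^{j/4}=\bigl(\bigl(\begin{smallmatrix}1 & j/4 \\ 0 & 1\end{smallmatrix}\bigr),1\bigr)$, and deduce invariance from the fact that when $4|N$, conjugation by $T^{j/4}$ preserves both $\widetilde{\Gamma}_{N}$ and the $\theta$-multiplier $\psi$. By Lemma \ref{cnmod4} combined with the discrete Fourier transform on $\mathbb{Z}/4\mathbb{Z}$, the projections admit the explicit expressions
\[
P_{2}f(\tau)=\tfrac{1}{2}\bigl(f(\tau)+f(\tau+\tfrac{1}{2})\bigr),\qquad P_{\varepsilon}f(\tau)=\tfrac{1}{4}\sum_{j=0}^{3}(1+i^{-\varepsilon j})f\bigl(\tau+\tfrac{j}{4}\bigr),
\]
where the $j=2$ term in the second sum vanishes identically. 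The identity $f|T=f$ (since $\psi(T)=1$) together with orthogonality of characters of $\mathbb{Z}/4\mathbb{Z}$ yields $P_{2}^{2}=P_{2}$ and $P_{\varepsilon}^{2}=P_{\varepsilon}$, confirming the projection property.

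For modularity, take $\gamma=\bigl(\bigl(\begin{smallmatrix}a & b \\ c & d\end{smallmatrix}\bigr),\phi_{\gamma}\bigr)\in\widetilde{\Gamma}_{N}$ and set $\gamma'_{j}:=T^{j/4}\gamma T^{-j/4}$. A short computation shows that $\gamma'_{j}$ is a matrix in $\Gamma_{N}$ with lower-left entry $c$ and lower-right $d':=d-jc/4$; all entries are integers because $4|N$ forces $16|c$ and $4|(d-a)$. The metaplectic factor becomes $\phi_{\gamma'_{j}}(\tau)=\phi_{\gamma}(\tau-j/4)$, preserving the square-root sign in $\phi_{\gamma}=\sigma\sqrt{c\tau+d}$. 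The key technical point is the identity $\psi(\gamma'_{j})=\psi(\gamma)$. Since $d\equiv d'\equiv 1\pmod{4}$ (using $4|N$), we have $\overline{\varepsilon_{d}}=\overline{\varepsilon_{d'}}=1$, so it suffices to verify $\bigl(\frac{c}{d}\bigr)=\bigl(\frac{c}{d'}\bigr)$ as Kronecker symbols. Writing $c=2^{e}m$ with $m$ odd and $e\geq 4$, the factor $\bigl(\frac{m}{d}\bigr)$ equals $\bigl(\frac{m}{d'}\bigr)$ because $d-d'=j\cdot 2^{e-2}m$ is divisible by $4m$, while $\bigl(\frac{2}{d}\bigr)^{e}=\bigl(\frac{2}{d'}\bigr)^{e}$ either trivially (when $e$ is even) or because $e\geq 5$ forces $d\equiv d'\pmod{8}$.

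By Proposition \ref{char4notdivN}, $f$ transforms under $\widetilde{\Gamma}_{N}$ via $\psi^{2k+1}$, so $f|\gamma'_{j}=\psi(\gamma'_{j})^{2k+1}f=\psi(\gamma)^{2k+1}f$. The relation $T^{j/4}\gamma=\gamma'_{j}T^{j/4}$ then gives $(f|T^{j/4})|\gamma=\psi(\gamma)^{2k+1}(f|T^{j/4})$, whence $(P_{\varepsilon}f)|\gamma=\psi(\gamma)^{2k+1}P_{\varepsilon}f$ and analogously for $P_{2}$. Since $\gcd(c,d)=1$ and $d\equiv 1\pmod{4}$ yield $\psi(\gamma)^{2}=1$, this equals $\psi(\gamma)^{2k+\xi}$ for either choice of $\xi$, placing $P_{\varepsilon}f$ in $\mathcal{A}_{k+1/2}(\widetilde{\Gamma}_{N},\psi^{2k+\xi})$ and $P_{2}f$ in $\mathcal{A}_{k+1/2}(\widetilde{\Gamma}_{N}^{\psi})$. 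The Fourier support of $P_{\varepsilon}f$ on $n\equiv 0,\varepsilon\pmod{4}$ then automatically lands it in the plus-space. The main obstacle, for which the hypothesis $4|N$ is essential, is the Kronecker symbol invariance $(c/d)=(c/d')$: this relies on the 2-adic slack $16|c$ and fails in general when $4\nmid N$, which is precisely why the clean projection formula is only available in the present case.
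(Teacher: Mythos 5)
Your proof is correct and follows essentially the same route as the paper's: both express $f_{j}(4\tau)$ as the discrete-Fourier average $\sum_{r}\frac{i^{-rj}}{4}f\big|T^{r/4}$ and reduce everything to the fact that conjugation by $T^{j/4}$ normalizes $\widetilde{\Gamma}_{N}^{\psi}$ when $4\mid N$. The only difference is that you spell out the Kronecker-symbol computation behind the paper's phrase ``one verifies by simple conjugation,'' together with the idempotency check, neither of which changes the argument.
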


\begin{proof}
One verifies by simple conjugation that if $4 \mid N$ then $T^{1/4}$ and its powers normalize $\Gamma_{N}$ as well as its lift $\widetilde{\Gamma}_{N}^{\psi}$ to $Mp_{2}(\mathbb{Z})$. It follows that the function $f_{j}(4\tau)$, which can be written as  $\sum_{r\in\mathbb{Z}/4\mathbb{Z}}\frac{i^{-rj}}{4}f\big|T^{r/4}$, lies in $\mathcal{A}_{k+1/2}(\widetilde{\Gamma}_{N}^{\psi})$ for each $j\in\mathbb{Z}/4\mathbb{Z}$ all 4 values of $j$. The combinations $P_{\varepsilon}f$ and $P_{2}$ therefore also lie in this space, and the plus-space condition for $P_{\varepsilon}f$ is clear. This proves the proposition.
\end{proof}
The explicit formulae for the projection operators $P_{\varepsilon}$ and $P_{2}$ from Proposition \ref{proj+4divN} in terms of the slash operator are using the combinations
$\frac{1}{2}I+\frac{\overline{\zeta_{8}}}{2\sqrt{2}}T^{\varepsilon/4}+\frac{\zeta_{8}}{2\sqrt{2}}T^{-\varepsilon/4}$
and $\frac{1}{2}I+\frac{1}{2}T^{1/2}$ respectively. A small follow-up of the argument proving Proposition \ref{proj+4divN} also shows that the plus-space is in general non-trivial when $4|N$.

\smallskip

The main result of this section can now be stated and proved. We let $\rho_{\varepsilon}$ denote $\rho_{1}$ if $\varepsilon=+1$, while for $\varepsilon=-1$ we consider the dual (or complex conjugate) representation $\rho_{1}^{*}=\overline{\rho}_{1}$ associated with the discriminant group $D_{1}(-1)$ of the lattice $L_{1}(-1)$ obtained from $D_{1}$ and $L_{1}$ by multiplying the bilinear and quadratic forms by $-1$. The canonical basis elements of $\rho_{1}^{*}$ will be denoted $\mathfrak{e}_{0}$ and $\mathfrak{e}_{-1}$.
\begin{thm}
Let $N$, $k$, $\xi$, $\varepsilon$, and $f\in\mathcal{A}_{k+1/2}^{+}(\widetilde{\Gamma}_{N},\psi^{2k+\xi})$ be given as above. Then the vector-valued function
\begin{equation}
\mathcal{L}_{\varepsilon}f(\tau):=f_{0}(\tau)\mathfrak{e}_{0}+f_{\varepsilon}(\tau)\mathfrak{e}_{\varepsilon},
\label{Lepsdef}
\end{equation}
in which the coefficients are defined in Equation \eqref{fjdef}, lies in $\mathcal{A}_{k+1/2}\big(\widetilde{\Gamma}_{1}(N),\rho_{\varepsilon}\big)$, and the map $\mathcal{L}_{\varepsilon}$ is an isomorphism between these two spaces. Moreover, if $\chi$ is a Dirichlet character modulo $N$ satisfying $\chi(-1)=\xi$ then image of the subspace $\mathcal{A}^{+}_{k+1/2}(\widetilde{\Gamma}_{0}(4N),\psi^{2k+1}\omega_{\chi})$ of $\mathcal{A}^{+}_{k+1/2}(\widetilde{\Gamma}_{0}(4N),\psi^{2k+\xi})$ under the map $\mathcal{L}_{\varepsilon}$ is exactly $\mathcal{A}_{k+1/2}(\widetilde{\Gamma}_{0}(N),\chi\otimes\rho_{\varepsilon})$.
\label{rho1lift}
\end{thm}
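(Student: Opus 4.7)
The plan is to prove the theorem in three parts: (a) that $\mathcal{L}_\varepsilon$ maps into $\mathcal{A}_{k+1/2}(\widetilde{\Gamma}_1(N),\rho_\varepsilon)$, (b) that it admits an explicit two-sided inverse, and (c) that the character subspaces correspond as stated.

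For (a), the transformation law under $T \in \widetilde{\Gamma}_1(N)$ is immediate: from the Fourier expansion, $f_j(\tau+1) = \mathbf{e}(j/4) f_j(\tau)$, which matches the diagonal action of $\rho_\varepsilon(T)$ from \eqref{rho1TSR} (eigenvalue $1$ on $\mathfrak{e}_0$ and $i^\varepsilon = \mathbf{e}(\varepsilon/4)$ on $\mathfrak{e}_\varepsilon$). For a general $\gamma' \in \widetilde{\Gamma}_1(N)$, the key observation is that whenever the upper-right entry of $\gamma'$ is divisible by $4$, the conjugation by $\bigl(\begin{smallmatrix} 1 & 0 \\ 0 & 4 \end{smallmatrix}\bigr)$ identifies $\gamma'$ (acting on $\tau$) with an element $\gamma \in \widetilde{\Gamma}_N$ acting on $4\tau$, with compatible metaplectic lifts. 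Combining this with Lemma \ref{cnmod4}, which in the plus-space gives $f_0(\tau) = \tfrac{1}{2}\bigl(f(\tau/4) + f((\tau+2)/4)\bigr)$ and $f_\varepsilon(\tau) = \tfrac{1}{2}\bigl(f(\tau/4) - f((\tau+2)/4)\bigr)$, together with the scalar transformation law of $f$ under $\widetilde{\Gamma}_N$ with character $\psi^{2k+\xi}$ and the diagonal form \eqref{rho1Gamma04} of $\rho_\varepsilon|_{\widetilde{\Gamma}_0(4)}$, the desired $\rho_\varepsilon$-transformation of $\mathcal{L}_\varepsilon f$ falls out. The general case is reached from this by $T$-translates; here the plus-space condition is exactly what guarantees that the would-be contributions to $\mathfrak{e}_2$ and $\mathfrak{e}_{-\varepsilon}$ (which lie outside $\rho_\varepsilon$) vanish.

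For (b), the two-sided inverse is $g = g_0 \mathfrak{e}_0 + g_\varepsilon \mathfrak{e}_\varepsilon \mapsto g_0(4\tau) + g_\varepsilon(4\tau)$. Bijectivity at the Fourier level is clear: the $\rho_\varepsilon(T)$-eigenvalues force $g_0$ to have integer $q$-powers and $g_\varepsilon$ to have $q^{n+\varepsilon/4}$-powers, so after $\tau \mapsto 4\tau$ only Fourier powers satisfying $\varepsilon n \equiv 0, 1 \pmod 4$ appear, which is exactly the plus-space condition. The $\widetilde{\Gamma}_N$-modularity with character $\psi^{2k+\xi}$ follows from running the calculation of (a) in reverse. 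For (c), coset representatives for $\widetilde{\Gamma}_0(N)/\widetilde{\Gamma}_1(N) \cong (\mathbb{Z}/N\mathbb{Z})^\times$ lift to $\widetilde{\Gamma}_0(4N)$, where $\omega_\chi$ is defined; by \eqref{omegachi}, the Kronecker-symbol factor $\bigl(\tfrac{4\chi(-1)}{d}\bigr)$ in $\omega_\chi$ is precisely the $\psi$-type character that the $\rho_\varepsilon$-action produces on these lifts (an extension of \eqref{rho1Gamma04}), so it is absorbed into the representation-theoretic transformation, leaving the scalar $\chi(d)$ acting on $\mathcal{L}_\varepsilon f$.

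The main obstacle is the computational bookkeeping in (a): carefully tracking the metaplectic square roots through the conjugation $\bigl(\begin{smallmatrix} 1 & 0 \\ 0 & 4 \end{smallmatrix}\bigr)\gamma\bigl(\begin{smallmatrix} 1 & 0 \\ 0 & 1/4 \end{smallmatrix}\bigr)$, the interplay between the subgroup of $\widetilde{\Gamma}_1(N)$ with upper-right entry divisible by $4$ and its complementary cosets, and the final matching of every factor of $i$ so that the $\rho_\varepsilon$-transformation law of $\mathcal{L}_\varepsilon f$ comes out exactly. All the machinery for this is in place from Section \ref{Wt1/2Z}, but the calculation is intricate enough that careful attention is needed to reconcile each factor.
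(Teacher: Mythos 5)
Your plan is essentially the paper's own argument: reduce to $T$ (handled by Fourier expansions) plus elements with upper-right entry divisible by $4$, conjugate those by $\tau\mapsto\tau/4$ and $\tau\mapsto(\tau+2)/4$ back into $\widetilde{\Gamma}_{N}$ so that Lemma \ref{cnmod4} and the scalar $\psi^{2k+\xi}$-transformation of $f$ do the work, invert via $g\mapsto g_{0}(4\tau)+g_{\varepsilon}(4\tau)$, and absorb the Kronecker factor of $\omega_{\chi}$ into $\psi^{2k+1}$ for the character statement. The one point to watch in carrying out your bookkeeping is that the relevant elements lie in $\widetilde{\Gamma}^{0}(4)$ rather than $\widetilde{\Gamma}_{0}(4)$, so $\rho_{\varepsilon}$ is diagonal there only in the basis $\mathfrak{e}_{0}\pm\mathfrak{e}_{\varepsilon}$ (which is exactly the basis Lemma \ref{cnmod4} hands you), and the generator $R^{N}$ must be checked separately since its two conjugates produce the distinct eigenvalues $1$ and $(-i)^{N}$.
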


\begin{rmk}
The first part of Equation \eqref{Thetarels} shows that the map $\mathcal{L}_{+}$ takes the function $\theta\in\mathcal{A}_{1/2}^{+}(\widetilde{\Gamma}_{1},\psi)$ to the vector-valued theta function $\Theta$ from Equation \eqref{Theta}. The other parts of Equation \eqref{Thetarels} are the corresponding special case of Lemma \ref{cnmod4}. In fact, one may use the relations between $\theta$ and $\Theta$ appearing in Equation \eqref{Thetarels} in order to establish some of the equalities appearing in the proof below. \label{thetaspcase}
\end{rmk}

\begin{proof}
We will give only the proof for $\varepsilon=1$, as the other case is established by the same argument, with the changes indicated in Remark \ref{L-} below. The modularity of $\mathcal{L}_{+}f$ with respect to $T$ follows immediately from Equation \eqref{rho1TSR} and the Fourier expansions of $f_{0}(\tau)$ and $f_{1}(\tau)$. As $\widetilde{\Gamma}_{1}(N)$ is generated by $T$ and $\widetilde{\Gamma}_{1}(N)\cap\widetilde{\Gamma}^{0}(4)$, it suffices to investigate the action elements of the latter group.

This investigation becomes more convenient when one uses the basis of $\mathbb{C}[D_{1}]$ consisting of the vectors $\mathfrak{e}_{+}:=\frac{\mathfrak{e}_{0}+\mathfrak{e}_{1}}{\sqrt{2}}$ and $\mathfrak{e}_{-}:=\frac{\mathfrak{e}_{0}-\mathfrak{e}_{1}}{\sqrt{2}}$. We consider, for $j\in\{0,1\}$, the matrix $\alpha_{j}:=\big(\begin{smallmatrix} 1/2 & j \\ 0 & 2\end{smallmatrix}\big)$ and the metaplectic element $A_{j}:=(\alpha_{j},\sqrt{2}) \in Mp_{2}(\mathbb{R})$. Writing $f_{0}$ and $f_{1}$ as in Lemma \ref{cnmod4} and using the fact that $f\big(\frac{\tau+2j}{4}\big)$ can be written as $2^{k+1/2}f\big|A_{j}(\tau)$ for our two values of $j$ allows us to write
\[\mathcal{L}_{+}f(\tau)=f\big(\tfrac{\tau}{4}\big)\frac{\mathfrak{e}_{0}+\mathfrak{e}_{1}}{2}+f\big(\tfrac{\tau+2}{4}\big)
\frac{\mathfrak{e}_{0}-\mathfrak{e}_{1}}{2}=2^{k+1}\big[(f\big|A_{0})(\tau)\mathfrak{e}_{+}+(f\big|A_{1})(\tau)\mathfrak{e}_{-}\big].\]

We now observe that $\widetilde{\Gamma}_{1}(N)\cap\widetilde{\Gamma}^{0}(4)$ is generated by $R^{N}$ and the subgroup $\widetilde{\Gamma}_{1}(4N)\cap\widetilde{\Gamma}^{0}(4)$, and we begin by considering an element $A=(\alpha,\phi)$ in the latter group. As $\rho_{1}(A)$ is just the scalar $\psi(A)$ by Equation \eqref{rho1Gamma04}, we have to verify that applying the slash operator $\big|A$ to $\mathcal{L}_{+}f$ simply multiplying the latter function by $\psi(A)$. But conjugating $\alpha=\big(\begin{smallmatrix} a & b \\ c & d\end{smallmatrix}\big)$ by our $\alpha_{j}$ yields the matrix $\big(\begin{smallmatrix} a+2cj & (b+2jd-2ja)/4-cj^{2} \\ 4c & d-2cj\end{smallmatrix}\big)\in\Gamma_{1}(4N)$, and the metaplectic data in $A_{j}AA_{j}^{-1}$ has the same image as $\phi$. It follows that when we evaluate the three multipliers appearing in the definition of $\psi(A_{j}AA_{j}^{-1})$ in Equation \eqref{rho1Gamma04} we get the same value as for $\psi(A)$. Equation \eqref{rho1Gamma04} then tells us that
\[\mathcal{L}_{+}f\big|A=2^{k+1}\big[(f\big|A_{0}A)\mathfrak{e}_{+}+(f\big|A_{1}A)\mathfrak{e}_{-}\big]=
2^{k+1}\big[\psi(A_{0}AA_{0}^{-1})(f\big|A_{0})\mathfrak{e}_{+}+\psi(A_{1}AA_{1}^{-1})(f\big|A_{1})\mathfrak{e}_{-}\big],\] which indeed coincides with $\psi(A)\mathcal{L}_{+}f$ as desired.

It remains to consider the action of $R^{N}$. Equation \eqref{rho1TSR} implies that our basis vectors $\mathfrak{e}_{\pm}$ are eigenvectors for $\rho_{1}(R)$ with the respective eigenvalues being $\frac{(1-i)(1 \pm i)}{2}$.
This is 1 for $\mathfrak{e}_{+}$ and $-i$ for $\mathfrak{e}_{-}$.
We must therefore compare $\mathcal{L}_{+}f\big|R^{N}$ with the expression in which $f\big|A_{0}$ remains invariant but $f\big|A_{1}$ is multiplied by $(-i)^{N}$. But the argument from above shows that $A_{j}R^{N}A_{j}^{-1}\in\widetilde{\Gamma}_{N}$ and $f\big|A_{j}R^{N}=\psi(A_{j}R^{N}A_{j}^{-1})f\big|A_{j}$ for each $j$. Now, conjugation yields \[A_{0}RA_{0}^{-1}=R^{4}\in\ker\psi,\quad\mathrm{as\ well\ as}\quad A_{1}RA_{1}^{-1}=TR^{-4}Z.\] It follows that $\psi(A_{0}R^{N}A_{0}^{-1})=1$ and $\psi(A_{1}R^{N}A_{1}^{-1})=(-i)^{N}$, and continuing with the same argument from above indeed evaluates $\mathcal{L}_{+}f\big|A$ as the desired expression $2^{k+1}\big[(f\big|A_{0}A)\mathfrak{e}_{+}+(-i)^{N}(f\big|A_{1}A)\mathfrak{e}_{-}\big]$. This establishes the first part.

For the converse, note that our argument actually proves that the vector $\mathfrak{e}_{+}$ is an eigenvector for all the operators $\rho_{1}(B)$ with $B\in\widetilde{\Gamma}^{0}(4)$, the eigenvalue being $\psi_{A_{0}^{-1}}(B):=\psi(A^{-1}_{0}BA_{0})$ (where the argument of $\psi$ indeed lies in $\widetilde{\Gamma}_{0}(4)$). In fact, $\mathfrak{e}_{-}$ is also an eigenvector, belonging to the eigenvalue $\psi_{A_{0}^{-1}}(T^{-2}AT^{2})$ since $A_{1}=A_{0}T^{2}$. Let now an element $g_{0}\mathfrak{e}_{0}+g_{1}\mathfrak{e}_{1}\in\mathcal{A}_{k+1/2}(\widetilde{\Gamma}_{1}(N),\rho_{1})$ be given, and denote the coefficient $\frac{g_{0}+g_{1}}{\sqrt{2}}$ of $\mathfrak{e}_{+}$ by $g_{+}$. This coefficient lies in $\mathcal{A}_{k+1/2}(\widetilde{\Gamma}_{1}(N)\cap\widetilde{\Gamma}^{0}(4),\psi_{A_{0}^{-1}})$. The function $g(\tau):=g_{0}(4\tau)+g_{1}(4\tau)$, which can be written as $2^{-k}(g_{+}\big|A_{0}^{-1})(\tau)$, is invariant under $T$. As before, we can restrict our attention to the subgroup $\widetilde{\Gamma}_{1}(N)\cap\widetilde{\Gamma}^{0}(4)$. But we have $A_{0}\widetilde{\Gamma}_{N}A_{0}^{-1}=\widetilde{\Gamma}_{1}(N)\cap\widetilde{\Gamma}^{0}(4)$, which implies, as above, that \[g\big|A=2^{-k}g_{+}\big|A_{0}A=2^{-k}\big(g_{+}\big|A_{0}AA_{0}^{-1}\big)\big|A_{0}=
\psi_{A_{0}^{-1}}(A_{0}AA_{0}^{-1})\cdot2^{-k}g_{+}\big|A_{0}=\psi(A)g\] for every $A\in\widetilde{\Gamma}_{N}$. This shows that that $g\in\mathcal{A}^{+}_{k+1/2}(\widetilde{\Gamma}_{N},\psi)$. As this map is clearly a two-sided inverse to $\mathcal{L}_{+}$, we deduce that $\mathcal{L}_{+}$ is an isomorphism. The claim involving characters follows from the same of calculations, but with $\widetilde{\Gamma}_{1}(N)$ replaced by $\widetilde{\Gamma}_{0}(N)$, using the fact that $\chi(d)\psi(A)$ equals $\omega_{\chi}(d)\psi^{2k+1}(A)$ for every $A\in\widetilde{\Gamma}_{0}(4N)$ with lower right entry $d$ and Dirichlet character $\chi$ modulo $N$ which satisfies $\chi(-1)=(-1)^{k}$. This completes the proof of the theorem.
\end{proof}

\begin{rmk}
We shall use mainly the case $\varepsilon=+1$ in this paper, though some auxiliary results will need the other case as well. The case $\varepsilon=-1$ may also turn out useful when one considers lifts from modular forms to Siegel modular forms. The proof is obtained as for $\varepsilon=+1$, after replacing $\rho_{1}$ by its dual representation, conjugating all the explicit coefficients (in particular, the $\psi$-values), and considering the characters $\chi$ satisfying $\chi(-1)=-(-1)^{k}$ (where $\omega_{\chi}(d)\psi^{2k+1}(A)$ equals $\chi(d)\overline{\psi}(A)$) in the end. A proof using theta functions for $\varepsilon=-1$, as indicated in Remark \ref{thetaspcase}, may be given as well, using the anti-holomorphic complex conjugate of the theta functions from Equations \eqref{Theta} and \eqref{Thetarels}. \label{L-}
\end{rmk}

\begin{rmk}
Theorem 5.1 of \cite{[EZ]} with $m=1$ relates the space $\mathcal{M}_{k+1/2}\big(Mp_{2}(\mathbb{Z}),\rho_{-1}\big)$ to the space of Jacobi forms of weight $k$ and index 1 on the full modular group (note that our $k$ is one less than $k$ in that reference). Similarly, the space $\mathcal{M}_{k+1/2}\big(Mp_{2}(\mathbb{Z}),\rho_{1}\big)$ is related to skew-holomorphic Jacobi forms. The functions considered in our Theorem \ref{rho1lift} can be related to more general Jacobi forms of weight $k+1$ and index 1. They are holomorphic in $z$ but in general not in $\tau$. Their transformation rules are again holomorphic for $\rho_{-1}$ and skew-holomorphic for $\rho_{1}$, but now only with respect to the congruence group $\Gamma_{1}(N)$. \label{Jacobi}
\end{rmk}

\begin{rmk}
Recall that if $4 \nmid N$ then the spaces $\mathcal{A}_{k+1/2}^{+}(\widetilde{\Gamma}_{N},\psi^{\varepsilon})$ with $\varepsilon\in\{\pm1\}$ are disjoint (Proposition \ref{char4notdivN}), while if $4|N$ then the $\mathcal{A}^{+}$ condition is different for the two values of $\varepsilon$. Hence the only functions on which both operators $\mathcal{L}_{\pm}$ are defined are elements of $\mathcal{A}_{k+1/2}^{+}(\widetilde{\Gamma}_{N}^{\psi})$ with $4|N$ which are $T^{1/4}$-invariant. But in this case we have $f\big|A_{0}=f\big|A_{1}$, so that Theorem \ref{rho1lift} shows that both lifts send such a function $f$ to $2^{k+1/2}f\big|A_{0}\mathfrak{e}_{0}$, with only one component present. Now, the modularity with respect to $\widetilde{\Gamma}_{1}(N)\subseteq\widetilde{\Gamma}_{1}(4)$ and representation $\rho_{\varepsilon}$ provided by Theorem \ref{rho1lift} reduces to scalar-valued modularity with character $\psi^{\varepsilon}$, and these two powers of $\psi$ coincide on $\widetilde{\Gamma}_{1}(4)$. Hence in the only case where both lifts $\mathcal{L}_{\pm}$ are defined, the essentially coincide. \label{L+L-same}
\end{rmk}

\begin{rmk}
When $N$ is 1 or 2 the group $\widetilde{\Gamma}_{1}(N)$ contains $Z$. Equation \eqref{rho1Gamma04} shows that $\rho_{1}(Z)$ is the scalar $\psi(Z)=-i$, hence $\rho_{1}^{*}(Z)$ is the scalar $i$. From Proposition \ref{imrhoZpsi} we thus know that the space $\mathcal{A}_{k+1/2}(Mp_{2}(\mathbb{Z}), \rho_{1})$ for odd $k$, as well as the space $\mathcal{A}_{k+1/2}(Mp_{2}(\mathbb{Z}),\rho_{1}^{*})$ for even $k$, is trivial. This is in correspondence, via Theorem \ref{rho1lift}, with Remark \ref{SVtriv}. \label{VVtriv}
\end{rmk}

\section{Representations of Dimension $2N^{2}$ \label{2N^2Lift}}

For $N\in\mathbb{N}$, we let $L_{B}(N)$ be the lattice which is obtained from a hyperbolic plane through rescaling by $N$, i.e., if $E$ and $F$ are two isotropic vectors pairing to 1 generating a hyperbolic plane (so that $Q(E)=Q(F)=0$ and $(E,F)=1$) then $L_{B}(N):=\mathbb{Z} \cdot E+N\mathbb{Z} \cdot F$. The dual lattice $L_{B}^{*}(N)$ (resp.\ the discriminant group $D_{B}(N)$) is then spanned over $\mathbb{Z}$ (resp.\ $\mathbb{Z}/N\mathbb{Z}$) by $\frac{1}{N}E$ and $F$. We denote the associated Weil representation, which factors through $SL_{2}(\mathbb{Z})$, by $\rho_{B}(N)$. Given two elements $c$ and $r$ in $\mathbb{Z}/N\mathbb{Z}$, we denote the canonical basis vector $\mathfrak{e}_{cF+\frac{r}{N}E+L_{B}(N)}$ of $\mathbb{C}[D_{B}(N)]$ by $\mathfrak{e}_{c,r}$.

Now, Lemma 2.6 of \cite{[B]} constructs a map from scalar-valued modular forms
of integral weight with respect to $\Gamma_{1}(N)$ to vector-valued modular forms with representation $\rho_{B}(N)$. We generalize this map and its properties in the following proposition.
\begin{prop}
Let $\kappa\in\frac{1}{2}\mathbb{Z}$, and let $\rho:Mp_{2}(\mathbb{Z}) \to
GL(V)$ be a representation such that $\rho(Z^{2})$ operates as the scalar $(-1)^{2\kappa}$. Given any element
$\varphi\in\mathcal{A}_{\kappa}\big(\widetilde{\Gamma}_{1}(N),\rho\big)$, the
function $\mathcal{L}_{N}\varphi:\mathcal{H} \to V\otimes\mathbb{C}[D_{B}(N)]$ defined by
\begin{equation}
\mathcal{L}_{N}\varphi:=\sum_{c,r\in\mathbb{Z}/N\mathbb{Z}}(\mathcal{L}_{N}\varphi)_{c,r}\otimes\mathfrak{e}_{c,r},\quad
(\mathcal{L}_{N}\varphi)_{c,r}:=\sum_{\substack{d\in\mathbb{Z}/N\mathbb{Z} \\
(c,d,N)=1}}\mathbf{e}\big(-\tfrac{dr}{N}\big)\varphi\big|_{\kappa,\rho}A_{c,d} \label{LNdef}
\end{equation}
lies in $\mathcal{A}_{\kappa}\big(Mp_{2}(\mathbb{Z}),\rho\otimes\rho_{B}(N)\big)$, where $A_{c,d}$ is any element of $Mp_{2}(\mathbb{Z})$ whose projection to $SL_{2}(\mathbb{Z})$ has lower row $(c\ \ d)$. \label{LNphi}
\end{prop}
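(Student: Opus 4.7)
The plan is to follow the pattern of Lemma 2.6 of \cite{[B]}, adapting it to the present generality of half-integral weight and vector-valued input $\varphi$. Since $Mp_{2}(\mathbb{Z})=\langle S,T\rangle$, after verifying well-definedness of each summand it suffices to check invariance of $\mathcal{L}_{N}\varphi$ under the slash operators $|_{\kappa,\rho\otimes\rho_{B}(N)}T$ and $|_{\kappa,\rho\otimes\rho_{B}(N)}S$. For well-definedness: two lifts in $Mp_{2}(\mathbb{Z})$ with the same lower row modulo $N$ (and $(c,d,N)=1$) differ on the left either by an element of $\widetilde{\Gamma}_{1}(N)$, under which $\varphi$ is invariant by hypothesis, or by $Z^{2}\in\widetilde{\Gamma}_{1}(N)$, whose slash action on $\varphi$ is trivial thanks to the assumption $\rho(Z^{2})=(-1)^{2\kappa}$.

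For the $T$-step, the identity $A_{c,d}T=A_{c,c+d}$ (valid modulo $\widetilde{\Gamma}_{1}(N)$) combined with the substitution $d\mapsto d-c$ in the summation yields $(\mathcal{L}_{N}\varphi)_{c,r}|_{\kappa,\rho}T=\mathbf{e}(cr/N)(\mathcal{L}_{N}\varphi)_{c,r}$. Since the quadratic form on $D_{B}(N)$ satisfies $Q(cF+\tfrac{r}{N}E)=cr/N$, this is exactly the reciprocal of the eigenvalue of $\rho_{B}(N)(T)$ on $\mathfrak{e}_{c,r}$, so that the two factors of the tensor cancel as required.

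For the $S$-step I would rewrite $\mathcal{L}_{N}\varphi$ in the Fourier-dual basis $\tilde{\mathfrak{e}}_{c,d}:=\sum_{r}\mathbf{e}(-dr/N)\mathfrak{e}_{c,r}$, in which
\[\mathcal{L}_{N}\varphi=\sum_{(c,d):(c,d,N)=1}(\varphi|_{\kappa,\rho}A_{c,d})\otimes\tilde{\mathfrak{e}}_{c,d}.\]
A direct calculation from Equation \eqref{rhoLS}, using $sgn(L_{B}(N))=0$ and $\Delta_{L_{B}(N)}=N^{2}$, collapses the double Fourier transform and yields $\rho_{B}(N)(S)^{-1}\tilde{\mathfrak{e}}_{c,d}=\tilde{\mathfrak{e}}_{d,-c}$, matching the transformation $(c,d)\mapsto(d,-c)$ of lower rows under right multiplication by $S$. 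Relabeling the summation index $(c,d)\mapsto(d,-c)$ then closes the computation. The main obstacle I expect is precisely this $S$-step: the original sum over $d$ carries the coprimality constraint $(c,d,N)=1$ and therefore does not admit a direct Fourier inversion in $d$, but passing to the dual basis $\tilde{\mathfrak{e}}_{c,d}$ sidesteps this by converting $\rho_{B}(N)(S)$ into a simple permutation of basis vectors that mirrors the right-multiplication action on lower rows.
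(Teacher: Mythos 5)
Your proposal is correct and follows essentially the same route as the paper: well-definedness of $\varphi\big|_{\kappa,\rho}A_{c,d}$ via invariance under $\widetilde{\Gamma}_{1}(N)$ and the hypothesis on $\rho(Z^{2})$, then verification on the generators $T$ and $S$, with the $T$-step being word-for-word the paper's index shift $d\mapsto c+d$. Your $S$-step, passing to the basis $\tilde{\mathfrak{e}}_{c,d}=\sum_{r}\mathbf{e}(-dr/N)\mathfrak{e}_{c,r}$ in which $\rho_{B}(N)(S)^{-1}$ acts by the permutation $(c,d)\mapsto(d,-c)$, is a clean repackaging of the paper's explicit exponential-sum identity (the orthogonality relation $\frac{1}{N}\sum_{g}\mathbf{e}(-g(c+h)/N)=\delta_{h,-c}$ is exactly what makes your basis change invert the Fourier transform in \eqref{rhoLS}), so the two computations are the same in substance.
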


\begin{rmk}
The condition $\rho(Z^{2})=(-1)^{2\kappa}$ and the invariance of $\varphi$ under $\big|_{\kappa,\rho}A$ for $A\in\widetilde{\Gamma}_{1}(N)$ ensure that
$\varphi\big|_{\kappa,\rho}A_{c,d}$ only depends on $c$ and $d$ modulo $N$. In particular we may consider the action of matrices, rather than metaplectic elements. \label{LNinv}
\end{rmk}

\begin{proof}
It suffices to verify the assertion for $T$ and $S$, as they generate $Mp_{2}(\mathbb{Z})$. For $T$ note that Remark \ref{LNinv} allows us to write
$\varphi\big|_{\kappa,\rho}A_{c,d}T$ as $\varphi\big|_{\kappa,\rho}A_{c,c+d}$. After changing the summation index from $d$ to $c+d$ in the definition of $(\mathcal{L}_{N}\varphi)_{c,r}$ in Equation \eqref{LNdef} and factoring out $\mathbf{e}\big(\frac{cr}{N}\big)$, we find that the operator $\big|_{\kappa,\rho}T$ multiplies $(\mathcal{L}_{N}\varphi)_{c,r}$ by $\mathbf{e}\big(\frac{cr}{N}\big)$. Since $\mathbf{e}(\frac{cr}{N}) = \mathbf{e}(\frac{\gamma^{2}}{2})$ for the element $\gamma=cF+\frac{r}{N}E+L_{B}(N)$ of $D_{B}(N)$, it follows that the vector-valued function $\mathcal{L}_{N}\varphi$ from Equation \eqref{LNdef} is invariant under the operator $\big|_{\kappa,\rho\otimes\rho_B(N)}T$.

When considering $S$, we can use a simple exponential sum identity to write
\[\varphi\big|_{\kappa,\rho}A_{c,d}S=\varphi\big|_{\kappa,\rho}A_{d,-c}=\frac{1}{N}
\sum_{h,g\in\mathbb{Z}/N\mathbb{Z}}\mathbf{e}\big(-\tfrac{g(c+h)}{N}\big)\varphi\big|_{\kappa,\rho}A_{d,h}.\] Note that the conditions $(c,d,N)=1$ and $(d,h,N)=1$ are equivalent since the sum over $g$ vanishes unless $c=-h$. Hence $A_{d,h}$ indeed makes sense. Summing over $d\in\mathbb{Z}/N\mathbb{Z}$ with this condition yields
\[(\mathcal{L}_{N}\varphi)_{c,r}\big|_{\kappa,\rho}S=\frac{1}{N}
\sum_{d,g\in\mathbb{Z}/N\mathbb{Z}}\mathbf{e}\bigg(-\frac{cg+dr}{N}\bigg)(\mathcal{L}_{N}\varphi)_{d,g},\] where the sum over $h$ enters the definition of $(\mathcal{L}_{N}\varphi)_{d,g}$ in Equation \eqref{LNdef}. The multiplier $\frac{1}{N}$ equals $\frac{\zeta_{8}^{-sgn(L_{B}(N))}}{\sqrt{|D_{B}(N)|}}$. In addition, $\mathbf{e}\big(-\frac{cg+dr}{N}\big)$ is $\mathbf{e}\big(-(\gamma,\delta)\big)$ with $\gamma=cF+\frac{r}{N}E+L_{B}(N)$ and $\delta=dF+\frac{g}{N}E+L_{B}(N)$ in $D_{B}(N)$. Comparing with the definition of $\rho_{B}(N)(S)$ in \eqref{rhoLS}, we see that $\mathcal{L}_{N}\varphi$ is invariant under $\big|_{\kappa,\rho\otimes\rho_B(N)}S$ as well. This completes the proof of the proposition.
\end{proof}

In case $\rho(T)$ is of finite order, $\mathcal{L}_{N}\varphi$ will have a Fourier expansion. It turns out useful to express it in terms of the Fourier expansions of $\varphi$ and its images under slash operators. For this purpose we note that Equation \eqref{LNdef} and Proposition \ref{LNphi} have the following corollary.
\begin{cor}
Assume that $\rho$ is unitary, and that $u \in V$ is an eigenvector of $\rho(T)$, with eigenvalue $\mathbf{e}(\alpha)$ for some $\alpha\in\mathbb{Q}/\mathbb{Z}$. Given any $c$ and $d$ in $\mathbb{Z}/N\mathbb{Z}$ with $(c,d,N)=1$, the coefficient $\varphi_{c,d,u}$ of $u$ in $\varphi\big|_{\kappa,\rho}A_{c,d}$, in any orthogonal basis for $V$ containing $u$, admits a Fourier expansion in $q_{w}:=\mathbf{e}\big(\frac{\tau}{w}\big)$, with coefficients depending on $y$, with $w=\frac{N}{(N,c^{2})}$. In this expansion, only terms of the form $q_{w}^{l}$ with $l\in\mathbb{Z}+w\alpha$ appear. Moreover, the coefficient $(\mathcal{L}_{N}\varphi)_{c,r,u}$ of $u$ in the function $(\mathcal{L}_{N}\varphi)_{c,r}$ from Equation \eqref{LNdef} has a similar Fourier expansion, but in which only those power of $l$ which are congruent to $\frac{crw}{N}+w\alpha$ modulo $w\mathbb{Z}$ appear, i.e., the expansion is in $q$, containing only terms of the form $q^{n}$ with $n\in\frac{cr}{N}+\alpha+\mathbb{Z}$. \label{LNFour}
\end{cor}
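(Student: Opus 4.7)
The plan is to derive both parts from Proposition \ref{LNphi}, exploiting the $T$-eigenvector hypothesis on $u$.

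I would dispose of the second assertion first, since it follows almost immediately from the $T$-invariance of $\mathcal{L}_N\varphi$ already proved in Proposition \ref{LNphi}. That invariance rewrites as $(\mathcal{L}_N\varphi)(\tau+1)=(\rho\otimes\rho_B(N))(T)(\mathcal{L}_N\varphi)(\tau)$. Using $\rho_B(N)(T)\mathfrak{e}_{c,r}=\mathbf{e}\bigl(Q(cF+\tfrac{r}{N}E)\bigr)\mathfrak{e}_{c,r}=\mathbf{e}(cr/N)\mathfrak{e}_{c,r}$ together with the hypothesis $\rho(T)u=\mathbf{e}(\alpha)u$, the vector $u\otimes\mathfrak{e}_{c,r}$ is a $(\rho\otimes\rho_B(N))(T)$-eigenvector with eigenvalue $\mathbf{e}(cr/N+\alpha)$. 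Reading off the $u\otimes\mathfrak{e}_{c,r}$-component of the equivariance equation gives
\[(\mathcal{L}_N\varphi)_{c,r,u}(\tau+1)=\mathbf{e}(cr/N+\alpha)\,(\mathcal{L}_N\varphi)_{c,r,u}(\tau),\]
which is precisely the claimed quasi-periodicity: the resulting $q$-expansion has exponents $n\in cr/N+\alpha+\mathbb{Z}$, equivalently $q_w$-exponents $l\equiv crw/N+w\alpha\pmod{w\mathbb{Z}}$.

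For the first assertion I would argue directly with the slash operator. Unwinding the definitions gives the identity
\[(\varphi|_{\kappa,\rho}A_{c,d})(\tau+w)=\rho(T^w)\bigl(\varphi|_{\kappa,\rho}(A_{c,d}T^w)\bigr)(\tau).\]
Writing $A_{c,d}T^w=M\cdot A_{c,d}$ with $M=A_{c,d}T^wA_{c,d}^{-1}$, the projection of $M$ to $SL_2(\mathbb{Z})$ is $\bigl(\begin{smallmatrix}1-acw & a^2w\\-c^2w & 1+acw\end{smallmatrix}\bigr)$. The choice $w=N/(N,c^2)$ is exactly the smallest positive integer making $-c^2w\equiv 0\pmod N$, so $M\in\widetilde{\Gamma}_0(N)$; this is recognizable as the classical cusp-width formula for the $\Gamma_0(N)$-cusp $a/c$. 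Combining this with Remark \ref{LNinv} and the $\rho(T)$-eigenvector hypothesis on $u$, the $u$-component of $\varphi|_{\kappa,\rho}(A_{c,d}T^w)=\varphi|_{\kappa,\rho}(MA_{c,d})$ collapses to $\varphi_{c,d,u}(\tau)$, and the prefactor $\rho(T^w)$ contributes the scalar $\mathbf{e}(w\alpha)$. The outcome is the quasi-periodicity $\varphi_{c,d,u}(\tau+w)=\mathbf{e}(w\alpha)\,\varphi_{c,d,u}(\tau)$, equivalent to the asserted $q_w$-expansion with exponents $l\in\mathbb{Z}+w\alpha$.

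The main obstacle is justifying the last step of the first part carefully: since $M$ generally lies only in $\widetilde{\Gamma}_0(N)$ and not in $\widetilde{\Gamma}_1(N)$, the function $\varphi|M$ need not equal $\varphi$ on the nose, so the reduction of the $u$-component of $\varphi|(MA_{c,d})$ to that of $\varphi|A_{c,d}$ is not automatic. The cancellation must come from tracking how $\rho(M)$ acts on the $\rho(T)$-eigenline spanned by $u$, using the explicit form of $M$ modulo $\widetilde{\Gamma}_1(N)$ and the identity $\rho(T^w)u=\mathbf{e}(w\alpha)u$ to pin down the phase. Once this is verified, both assertions are in place.
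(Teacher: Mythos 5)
Your treatment of the second assertion is correct and complete: reading off the $u\otimes\mathfrak{e}_{c,r}$-component of the $T$-equivariance of $\mathcal{L}_{N}\varphi$ gives $(\mathcal{L}_{N}\varphi)_{c,r,u}(\tau+1)=\mathbf{e}\big(\tfrac{cr}{N}+\alpha\big)(\mathcal{L}_{N}\varphi)_{c,r,u}(\tau)$ directly, which is exactly the claimed $q$-expansion. This is the same ingredient the paper uses, except that the paper routes the second assertion through the first (writing $(\mathcal{L}_{N}\varphi)_{c,r}$ as a linear combination of the $\varphi\big|_{\kappa,\rho}A_{c,d}$ and then refining by the eigenvalue $\mathbf{e}\big(\tfrac{cr}{N}\big)$); your direct derivation is cleaner and, importantly, does not depend on the first assertion at all.

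The gap you flag in the first assertion is genuine, and it cannot be closed along the line you propose. The obstruction has nothing to do with $\rho(T)$ or the eigenline of $u$: it is the scalar transformation of $\varphi$ under $M=A_{c,d}T^{w}A_{c,d}^{-1}$, which lies in $\widetilde{\Gamma}_{1}(N)$ only when $N\mid acw$ in addition to $N\mid c^{2}w$, and the hypotheses say nothing about how $\varphi$ transforms under $\widetilde{\Gamma}_{0}(N)\setminus\widetilde{\Gamma}_{1}(N)$. Indeed the assertion fails as stated: take $N=9$, $c=3$, $\rho$ trivial, $\kappa$ integral, and $\varphi\in\mathcal{M}_{\kappa}\big(\Gamma_{0}(9),\chi\big)\subseteq\mathcal{M}_{\kappa}\big(\Gamma_{1}(9)\big)$ with $\chi$ of order $6$; then $w=1$ and $\alpha=0$, but with $A_{3,1}=\big(\begin{smallmatrix}1&0\\3&1\end{smallmatrix}\big)$ one finds $(\varphi\big|A_{3,1})(\tau+1)=\chi(4)^{\pm1}(\varphi\big|A_{3,1})(\tau)$ with $\chi(4)$ a primitive cube root of unity, so there is no expansion with integral $q$-exponents. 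The statement that is always valid is invariance under $T^{h}$ for the least $h$ with $N\mid c^{2}h$ and $N\mid ach$ (equivalently, by Remark \ref{LNinv}, under $T^{N/(N,c)}$, since the lower row of $A_{c,d}T^{h}$ is $(c,\,ch+d)$). The paper's own proof asserts $A_{c,d}^{-1}\widetilde{\Gamma}_{1}(N)A_{c,d}\cap\langle T\rangle=\langle T^{w}\rangle$ ``by cusp width considerations'', but $N/(N,c^{2})$ is the cusp width for $\Gamma_{0}(N)$, not for $\Gamma_{1}(N)$, so your explicit conjugation has in fact exposed the same imprecision there. None of this propagates further: Corollary \ref{LNFour} is only ever invoked with $c\equiv0\bmod N$, where $A_{c,d}T^{h}A_{c,d}^{-1}\in\widetilde{\Gamma}_{1}(N)$ for every $h$, both widths equal $1$, and only the second assertion --- which you have proved --- is actually used.
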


\begin{proof}
A classical argument shows that if $\varphi\in\mathcal{A}_{\kappa}\big(\widetilde{\Gamma}_{1}(N),\rho\big)$ then
$\varphi\big|_{\kappa,\rho}A_{c,d}$ lies in $\mathcal{A}_{\kappa}\big(A_{c,d}^{-1}\widetilde{\Gamma}_{1}(N)A_{c,d},\rho\big)$, and $A_{c,d}^{-1}\widetilde{\Gamma}_{1}(N)A_{c,d}\cap\langle T \rangle=\langle T^{w} \rangle$ by cusp width considerations. Hence $\varphi\big|_{\kappa,\rho}A_{c,d}$ is invariant under $\big|_{\kappa,\rho}T^{w}$, from which we deduce the equality $\varphi_{c,d,u}(\tau+w)=\mathbf{e}(w\alpha)\varphi_{c,d,u}(\tau)$ for the scalar-valued coefficient $\varphi_{c,d,u}$ and any $\tau\in\mathcal{H}$. The form of the Fourier expansion of $\varphi_{c,d,u}$ is now clear. The same assertion thus holds also for $(\mathcal{L}_{N}\varphi)_{c,r}$, which is a linear combination of these functions by Equation \eqref{LNdef}. But Proposition \ref{LNphi} shows that applying $\big|_{\kappa,\rho}T$ (without the $w$\textsuperscript{th} power) to $(\mathcal{L}_{N}\varphi)_{c,r}$ yields $\mathbf{e}\big(\frac{cr}{N}\big)(\mathcal{L}_{N}\varphi)_{c,r}$. The coefficient $(\mathcal{L}_{N}\varphi)_{c,r,u}$ therefore satisfies $(\mathcal{L}_{N}\varphi)_{c,r,u}(\tau+1)=\mathbf{e}\big(\frac{cr}{N}+\alpha\big)\cdot(\mathcal{L}_{N}\varphi)_{c,r,u}(\tau)$. The assertion about the Fourier expansion of $(\mathcal{L}_{N}\varphi)_{c,r,u}$ follows from the fact that the asserted powers of $l$ are precisely those which satisfy $\mathbf{e}\big(\frac{l}{w}\big)=\big(\frac{cr}{N}+\alpha\big)$. Writing $q_{w}^{l}$ as $q^{n}$ for $n=\frac{l}{w}$ now completes the proof of the corollary.
\end{proof}

Assuming that $\varphi\in\mathcal{A}_{\kappa}\big(\widetilde{\Gamma}_{0}(N),\chi\otimes\rho\big)$ for some character $\chi$ modulo $N$, its image under $\mathcal{L}_{N}$ should carry additional structure. To describe it we define, for every $h\in(\mathbb{Z}/N\mathbb{Z})^{\times}$, the map $m_{h} \in Aut(D_{B}(N))$ that multiplies $F$ by $h$ and $\frac{1}{N}E$ by its inverse in $(\mathbb{Z}/N\mathbb{Z})^{\times}$. The map $h \mapsto m_{h}$ embeds $(\mathbb{Z}/N\mathbb{Z})^{\times}$ into $Aut\big(D_{B}(N)\big)$, hence into the automorphism group of spaces of $\mathbb{C}[D_{B}(N)]$-valued functions. We may thus decompose $\mathcal{A}_{\kappa}\big(Mp_{2}(\mathbb{Z}),\rho\otimes\rho_{B}(N)\big)$ as the direct sum of the $\chi$-isotypic subspaces
\begin{equation}
\mathcal{A}_{\kappa}\big(Mp_{2}(\mathbb{Z}),\rho\otimes\rho_{B}(N)\big)^{\chi}=\Big\{
G\in\mathcal{A}_{\kappa}\big(Mp_{2}(\mathbb{Z}),\rho\otimes\rho_{B}(N)\big)\Big|m_{h}(G)=\chi(h)G\ \forall h\in(\mathbb{Z}/N\mathbb{Z})^{\times}\Big\} \label{chiisodef}
\end{equation}
for Dirichlet characters $\chi$ modulo $N$. We can now prove
\begin{prop}
If $\varphi\in\mathcal{A}_{\kappa}\big(\widetilde{\Gamma}_{0}(N),\chi\otimes\rho\big)$ for some Dirichlet character $\chi$ modulo $N$, then
$\mathcal{L}_{N}\varphi$ lies in $\mathcal{A}_{\kappa}\big(Mp_{2}(\mathbb{Z}),\rho\otimes\rho_{B}(N)\big)^{\overline{\chi}}$. \label{LNphichi}
\end{prop}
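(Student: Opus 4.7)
The plan is to separate the two assertions packaged in the statement. The claim that $\mathcal{L}_{N}\varphi$ lies in $\mathcal{A}_{\kappa}\big(Mp_{2}(\mathbb{Z}),\rho\otimes\rho_{B}(N)\big)$ is already available: since $\widetilde{\Gamma}_{0}(N)\supseteq\widetilde{\Gamma}_{1}(N)$, the hypothesis $\varphi\in\mathcal{A}_{\kappa}\big(\widetilde{\Gamma}_{0}(N),\chi\otimes\rho\big)$ forces $\varphi\in\mathcal{A}_{\kappa}\big(\widetilde{\Gamma}_{1}(N),\rho\big)$, and Proposition \ref{LNphi} applies. The real content is the equivariance under the automorphisms $m_{h}$, which I will verify by unwinding the definition \eqref{LNdef}.

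Since $m_{h}$ sends $\mathfrak{e}_{c,r}$ to $\mathfrak{e}_{hc,h^{-1}r}$ (where $h^{-1}$ denotes the inverse of $h$ in $(\mathbb{Z}/N\mathbb{Z})^{\times}$), a relabeling of indices shows that the defining condition $m_{h}(\mathcal{L}_{N}\varphi)=\overline{\chi}(h)\mathcal{L}_{N}\varphi$ of the $\overline{\chi}$-isotypic subspace \eqref{chiisodef} is equivalent to the componentwise identity $(\mathcal{L}_{N}\varphi)_{h^{-1}c,hr}=\overline{\chi}(h)(\mathcal{L}_{N}\varphi)_{c,r}$ for every $h\in(\mathbb{Z}/N\mathbb{Z})^{\times}$ and $c,r\in\mathbb{Z}/N\mathbb{Z}$. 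Expanding the left hand side via \eqref{LNdef} and substituting $d\mapsto h^{-1}d$ in the sum over $d$ turns the phase $\mathbf{e}(-d\cdot hr/N)$ into $\mathbf{e}(-dr/N)$ and preserves the coprimality condition $(h^{-1}c,d,N)=1$, reducing the problem to the pointwise identity $\varphi\big|_{\kappa,\rho}A_{h^{-1}c,h^{-1}d}=\overline{\chi}(h)\,\varphi\big|_{\kappa,\rho}A_{c,d}$.

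To establish this last identity, I pick $\gamma\in\widetilde{\Gamma}_{0}(N)$ whose projection has lower right entry congruent to $h$ modulo $N$. A direct matrix calculation shows that the lower row of $\gamma\cdot A_{h^{-1}c,h^{-1}d}$ is then congruent to $(c,d)$ modulo $N$, so by Remark \ref{LNinv} this product is a legitimate choice for $A_{c,d}$; meanwhile the hypothesis on $\varphi$ amounts to $\varphi\big|_{\kappa,\rho}\gamma=\chi(h)\varphi$, so the cocycle property of the slash operator yields $\chi(h)\,\varphi\big|_{\kappa,\rho}A_{h^{-1}c,h^{-1}d}=\varphi\big|_{\kappa,\rho}A_{c,d}$, which is the required identity after dividing by $\chi(h)$. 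The only subtle step is verifying that the substitution and the matrix manipulation are compatible with the indeterminacy in the choice of $A_{c,d}$, well-defined only modulo left multiplication by $\widetilde{\Gamma}_{1}(N)$; this is precisely what Remark \ref{LNinv} ensures, using that $\chi$ is trivial on $\widetilde{\Gamma}_{1}(N)$. The metaplectic-cocycle bookkeeping is automatic once $\gamma$ is lifted to $Mp_{2}(\mathbb{Z})$, as the same product of multipliers appears on both sides of the resulting identity.
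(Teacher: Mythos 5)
Your argument is correct and follows essentially the same route as the paper: both reduce the $m_{h}$-equivariance to a componentwise identity, reindex the sum over $d$ by a unit multiple, and realize the shifted $A_{c,d}$ as a left translate by an element of $\widetilde{\Gamma}_{0}(N)$ with lower right entry $h$, whose action on $\varphi$ produces the factor $\chi(h)$. The only difference is cosmetic (you work with $h^{-1}$ where the paper works with $h$), and your explicit attention to the well-definedness via Remark \ref{LNinv} matches the paper's parenthetical appeal to the same remark.
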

The complex conjugation on $\chi$ appearing in Proposition \ref{LNphichi}, which
results from our normalization, will turn out more appropriate for the theta
lifts below.

\begin{proof}
Let $h\in(\mathbb{Z}/N\mathbb{Z})^{\times}$, with inverse $g$ modulo $N$, be given. We may replace the summation index $d$ in the formula defining  $(\mathcal{L}_{N}\varphi)_{ch,rg}$ in Equation \eqref{LNdef} by $dh$, which does not affect the condition $(c,d,N)=1$. Note that any element of $\widetilde{\Gamma}_{0}(N)$ lying over a matrix with lower right entry congruent to $h$ modulo $N$ can be taken as our $A_{0,h}$, and left multiplication by such a matrix takes $A_{c,d}$ to $A_{ch,dh}$ (up to Remark \ref{LNinv}). This gives \[(\mathcal{L}_{N}\varphi)_{ch,rg}=\sum_{\substack{d\in\mathbb{Z}/N\mathbb{Z} \\ (c,d,N)=1}}\mathbf{e}\big(-\tfrac{dhrg}{N}\big)\varphi\big|_{\kappa,\rho}A_{ch,dh}=\sum_{\substack{d\in\mathbb{Z}/N\mathbb{Z} \\ (c,d,N)=1}}\mathbf{e}\big(-\tfrac{dr}{N}\big)\varphi\big|_{\kappa,\rho}A_{0,h}A_{c,d}=\chi(h)(\mathcal{L}_{N}\varphi)_{c,r}\] by Equation \eqref{LNdef} and the behavior of $\varphi$ under $A_{0,h}\in\widetilde{\Gamma}_{0}(N)$. From this it now follows, by a simple summation index change in Equation \eqref{LNdef}, that
\[\mathcal{L}_{N}\varphi=\sum_{c,r\in\mathbb{Z}/N\mathbb{Z}}(\mathcal{L}_{N}\varphi)_{ch,rg}\otimes\mathfrak{e}_{ch,rg}=
\sum_{c,r\in\mathbb{Z}/N\mathbb{Z}}\chi(h)\cdot(\mathcal{L}_{N}\varphi)_{c,r}\otimes\mathfrak{e}_{ch,rg}=\chi(h) \cdot m_{h}(\mathcal{L}_{N}\varphi).\] This completes the proof of the proposition.
\end{proof}

\section{A Lattice for $\Gamma_{1}(N)$ \label{L1N}}

In this section, we will recall some facts about a lattice whose discriminant kernel is related to $\Gamma_{1}(N)$. Let $V:=M_{2}(\mathbb{R})_{0}$ be the space of traceless $2\times2$ real matrices, which becomes a quadratic space of signature $(2,1)$ if we set $Q(A):=-\det A$. The induced bilinear pairing is given by $(A,B)=Tr(AB)$. The group $SL_{2}(\mathbb{R})$ operates orthogonally on $V$ by conjugation. This operation defines an isomorphism between $PSL_{2}(\mathbb{R})$ and $SO^{+}(V)$, the connected component of $SO(V)$ containing the identity.

The traceless matrices \[E=\begin{pmatrix} 0 & 1 \\ 0 & 0\end{pmatrix},\quad H=\begin{pmatrix} 1 & 0 \\ 0 & -1\end{pmatrix},\quad\mathrm{and}\quad F=\begin{pmatrix} 0 & 0 \\ 1 & 0\end{pmatrix}\] form a basis for $V$, in which $E$ and $F$ are isotropic and span a hyperbolic plane, and $H$ is orthogonal to both of them and has norm 2. Take $M=\big(\begin{smallmatrix} a & b \\ c & d\end{smallmatrix}\big) \in SL_{2}(\mathbb{R})$, with inverse $M^{-1}=\big(\begin{smallmatrix} d & -b \\ -c & a\end{smallmatrix}\big)$. Conjugating $E$, $H$, and $F$ yields
\begin{equation} \begin{pmatrix} -ac & a^{2} \\ -c^{2} & ac\end{pmatrix},\quad\begin{pmatrix} ad+bc & -2ab \\ 2cd & -ad-bc\end{pmatrix},\quad\mathrm{and}\quad\begin{pmatrix} bd & -b^{2} \\ d^{2} & -bd\end{pmatrix} \label{SL2RM2R0}
\end{equation}
respectively, so that the action of $M$ in the basis $-E$, $\frac{1}{2}H$, and $F$ is represented by the matrix classically known as $\mathrm{Sym}^{2}M$.

For a lattice $L \subseteq V$ of full rank, denote $SAut^{+}(L):=Aut(L) \cap SO^{+}(V)$. Given $N\in\mathbb{N}$, we define the lattice
\[L_{1}(N):=\mathbb{Z} \cdot H\oplus\mathbb{Z} \cdot E\oplus\mathbb{Z} \cdot NF
\cong L_{1} \oplus L_{B}(N),\] where $L_{1}\cong\mathbb{Z} \cdot H$ is the 1-dimensional lattice giving rise to the Weil representation $\rho_{1}$ from Equation \eqref{rho1TSR} and \eqref{rho1Gamma04} as well as the theta functions from Equation \eqref{Theta} . The dual lattice $L_{1}^{*}(N)\cong\mathbb{Z}\cdot\frac{1}{2}H \oplus L_{B}^{*}(N)$ is spanned by $\frac{1}{2}H$, $\frac{1}{N}E$, and $F$, and $D_{1}(N) \cong D_{1} \oplus D_{B}(N)$. The Weil representation $\rho_{L_{1}(N)}$, which we denote by $\rho_{1}(N)$, is thus isomorphic to $\rho_{1}\otimes\rho_{B}(N)$.

The full group $SAut^{+}\big(L_{1}(N)\big)$ is described in Remark \ref{SAutL1N} below. We shall only make use of the following proposition.

\begin{prop}
The group $\Gamma_{0}(N)$ preserves the lattice $L_{1}(N)$. The matrix $\big(\begin{smallmatrix} a & b \\ c & d\end{smallmatrix}\big)\in\Gamma_{0}(N)$ acts trivially on the summand $D_{1}(N)$ and as $m_{d^{2}} \in Aut\big(D_{B}(N)\big)$ defined before Equation \eqref{chiisodef} on $D_{B}(N)$. \label{Gamma0NL1N}
\end{prop}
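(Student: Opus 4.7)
The plan is a direct computation using the formulas for the conjugation action of $SL_{2}(\mathbb{R})$ on $V=M_{2}(\mathbb{R})_{0}$ collected in Equation \eqref{SL2RM2R0}. Fix $M=\bigl(\begin{smallmatrix} a & b \\ c & d\end{smallmatrix}\bigr)\in\Gamma_{0}(N)$, and write $c=Nc'$ for some $c'\in\mathbb{Z}$, with the standing relation $ad-bc=1$.

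First I would verify that $L_{1}(N)=\mathbb{Z}E\oplus\mathbb{Z}H\oplus\mathbb{Z}\cdot NF$ is $M$-stable by reading off the coefficients of $MEM^{-1}$, $MHM^{-1}$, and $M(NF)M^{-1}$ in the basis $(E,H,NF)$ from Equation \eqref{SL2RM2R0}. All such coefficients turn out to be integers: the only ones that are not manifestly integral are the $F$-coefficients $-c^{2}$ of $MEM^{-1}$ and $2cd$ of $MHM^{-1}$, but both are divisible by $N$ since $N\mid c$, and so contribute integer multiples of $NF$.

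Next I would compute the induced action on $D_{1}(N)\cong D_{1}\oplus D_{B}(N)$ by reducing the images of the three standard coset representatives $\frac{1}{2}H$, $\frac{1}{N}E$, and $F$ modulo $L_{1}(N)$. For $\frac{1}{2}H$, the $E$- and $F$-parts of the image, namely $-abE$ and $cdF=c'd\cdot NF$, already lie in $L_{1}(N)$, and the $H$-coefficient $\frac{ad+bc}{2}=\frac{1}{2}+bc$ (using $ad=1+bc$) reduces to $\frac{1}{2}$ modulo $\mathbb{Z}$. Hence $M\cdot\frac{1}{2}H\equiv\frac{1}{2}H\pmod{L_{1}(N)}$, which simultaneously shows that the $D_{1}$ summand is preserved and that $M$ acts trivially on it. For $\frac{1}{N}E$ and for $F$, the $H$-contributions $-ac'H$ and $bdH$ and the stray contributions $-Nc'^{2}F=-c'^{2}\cdot NF$ and $-b^{2}E$ all lie in $L_{1}(N)$; what remains modulo $L_{1}(N)$ is $a^{2}\cdot\frac{1}{N}E$ and $d^{2}\cdot F$ respectively, both classes in $D_{B}(N)$. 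Finally, $ad\equiv 1\pmod{N}$ (forced by $ad-bc=1$ and $N\mid c$) gives $a^{2}\equiv (d^{2})^{-1}\pmod{N}$, so these images agree with $m_{d^{2}}\bigl(\frac{1}{N}E\bigr)$ and $m_{d^{2}}(F)$ by the very definition of $m_{d^{2}}$ recalled before Equation \eqref{chiisodef}.

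There is no substantive obstacle here; the argument reduces to bookkeeping on a few matrix entries. The only point meriting attention is the compatibility of the induced action with the $D_{1}\oplus D_{B}(N)$ decomposition, which comes for free from the observation that in all three reductions above the ``off-diagonal'' contributions (the $E$- and $F$-terms for the $\frac{1}{2}H$ computation, and the $H$-terms for the other two) drop into $L_{1}(N)$, so no mixing between the two summands occurs.
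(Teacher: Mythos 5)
Your computation is correct and follows exactly the same route as the paper's proof: both read off the conjugation action from Equation \eqref{SL2RM2R0}, check integrality of the coefficients (using $N\mid c$) to get lattice stability, and then reduce the images of $\tfrac{1}{2}H$, $\tfrac{1}{N}E$, and $F$ modulo $L_{1}(N)$, identifying the result with $m_{d^{2}}$ via $ad\equiv 1\pmod N$. Your write-up is just a more explicit version of the paper's one-paragraph argument.
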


\begin{rmk}
As a consequence, the discriminant kernel of $L_{1}(N)$ contains all matrices $\big(\begin{smallmatrix} a & b \\ c & d\end{smallmatrix}\big) \in \Gamma_{0}(N)$ satisfying $d^2 \equiv 1 \bmod{N}$. We denote this subgroup of $\Gamma_{0}(N)$, which in particular contains $\Gamma_{1}(N)$, by $\Gamma_{1}^{\sqrt{1}}(N)$. \label{disckerL1N}
\end{rmk}

\begin{proof}
It is clear from Equation \eqref{SL2RM2R0} that if $a$, $b$, $c$, and $d$ are integers with $N|c$ then the images of the basis $E$, $H$, and $-NF$ of $L_{1}(N)$ under $\big(\begin{smallmatrix} a & b \\ c & d\end{smallmatrix}\big)$ also lie in $L_{1}(N)$. Moreover, using the condition $ad-bc=1$ we find that $\frac{1}{2}H+L_{1}(N)$ is fixed, while the images of the other generators $\frac{1}{N}E+L_{1}(N)$ and $F+L_{1}(N)$ of $D_{1}^{}(N)$ equal $\frac{a^{2}}{N}E+L_{1}(N)$ and $d^{2}F+L_{1}(N)$ respectively. This proves the proposition.
\end{proof}

\begin{rmk}
Let $\Gamma_{0}^{*}(N)$ be the group which is obtained by adding to $\Gamma_{0}(N)$ the \emph{Atkin--Lehner involutions}, i.e., matrices of the form $M=\big(\begin{smallmatrix} e\sqrt{\mu} & f/\sqrt{\mu} \\ g\sqrt{\mu}N/\mu & h\sqrt{\mu}\end{smallmatrix}\big)$, with a divisor $\mu$ of $N$ which satisfies $\big(\mu,\frac{N}{\mu}\big)=1$ and integers $e$, $f$, $g$, and $h$ satisfying $eh\mu-fg\frac{N}{\mu}=1$. It contains $\Gamma_{0}(N)$ as a normal subgroup, and the quotient is isomorphic to a product of $\{\pm1\}$'s, one for each prime divisor of $N$. The action of that larger group also preserves $L_{1}(N)$. It leaves the generator $\frac{1}{2}H$ invariant as well, but only elements with $\mu=1$ (i.e., from $\Gamma_{0}(N)$) preserve the subgroups $(\mathbb{Z}/N\mathbb{Z})\frac{1}{N}E$ and $(\mathbb{Z}/N\mathbb{Z})F$. One can show that $\Gamma_{0}^{*}(N)$ is the stabilizer of $\frac{1}{2}H$ in $SAut^{+}\big(L_{1}(N)\big)$ (so that the group $\Gamma_{1}^{\sqrt{1}}(N)$ from Proposition \ref{Gamma0NL1N} is the full discriminant kernel of $L_{1}(N)$). Moreover, if $4|N$ then $\Gamma_{0}^{*}(N)$ is the full group $SAut^{+}\big(L_{1}(N)\big)$, while otherwise it has index 4 in that group. All this can be proved as in the forthcoming paper \cite{[Ze3]} of the second author. The proof from \cite{[Ze3]} also covers the assertion from Proposition 2.2 of \cite{[BO]}, concerning a lattice whose discriminant kernel is $\Gamma_{0}(N)$ (and whose the full $SAut^{+}$ group is $\Gamma_{0}^{*}(N)$ for any $N$). \label{SAutL1N}
\end{rmk}

\section{Theta Lifts \label{ThetaLift}}

To obtain the main result of this paper, we shall apply Theorem 14.3 of \cite{[B]}. We will introduce the notations and definitions sufficient to give a full explanation relevant to our case. For more details, the reader is referred to Sections 13 and 14 of \cite{[B]}, as well as Section 3.2 of \cite{[Br1]} or Section 2.2 of \cite{[Ze2]}.

Let $L$ be a lattice of signature $(2,b_{-})$ containing a primitive isotropic
vector $z$, define $K:=(L \cap z^{\perp})/\mathbb{Z}z$, and take $\zeta \in L^{*}$ with $(\zeta,z)=1$. We define $C$ to be the positive cone of positive norm vectors in the Lorentzian vector space $K_{\mathbb{R}}$ according to the choice of $z$ and an orientation on the positive definite parts of $L$, and use $W$ to denote a Weyl chamber in $C$. Let $M\in\mathbb{N}$ be such that  $(z,L)=M\mathbb{Z}$, and denote the $k$\textsuperscript{th} Bernoulli polynomial by $B_{k}$ for any $k\in\mathbb{N}$. Given $F\in\mathcal{M}_{\kappa}^{!}(Mp_{2}(\mathbb{Z}),\rho_{L})$, with Fourier expansion $\sum_{\gamma,n}c_{\gamma,n}q^{n}\mathfrak{e}_{\gamma}$, we denote $F_{K}$ the modular form of representation $\rho_{K}$ which is obtained from $F$ by the operator, denoted  $\downarrow$ in \cite{[Br2]} and others, which is associated with the isotropic subgroup of $D_{L}$ that is generated by a primitive element of $\mathbb{Q}z \cap L^{*}$. We can now quote Theorem 14.3 of \cite{[B]}:
\begin{thm}
In the notations above with $F\in\mathcal{M}_{\kappa}(Mp_{2}(\mathbb{Z}),\rho_{L})$ and $\kappa=1-\frac{b_{-}}{2}+k$ for some $2 \leq k\in\mathbb{N}$, the following Fourier expansion defines a holomorphic function in the variable $Z \in K_{\mathbb{R}}+iC$, which is an automorphic form of weight $k$ with respect to the discriminant kernel $\Gamma_{L}$:
\[\frac{M^{k-1}}{2k}\sum_{\varepsilon=1}^{M}\sum_{\delta\in\mathbb{Z}/M\mathbb{Z}}B_{k}\bigg(\frac{\varepsilon}{M}\bigg)
\mathbf{e}\bigg(\frac{\varepsilon\delta}{M}\bigg)c_{\frac{\delta z}{M}+L,0}+\sum_{n=1}^{\infty}\sum_{\substack{\lambda \in K^{*} \\ (\lambda,W)>0}}n^{k-1}\mathbf{e}\big(n(\lambda,Z)\big)\sum_{\substack{\eta \in L^{*}/L \\ \eta|_{z^{\perp}}=\pi^{*}\lambda}}\mathbf{e}\big(n(\eta,\zeta)\big)c_{\eta,\frac{\lambda^{2}}{2}}.\]
When $F(\tau)\in\mathcal{M}_{\kappa}^{!}\big(Mp_{2}(\mathbb{Z}),\rho_{L}\big)$, the resulting automorphic form is meromorphic, with poles of order $k$ along special divisors on $K_{\mathbb{R}}+iC$. Both statements extend to the case $k=1$, provided that one adds a theta integral involving $F_{K}$ to the Fourier expansion. If $F$ is a harmonic weak Maass form whose image under the operator $\xi$ from \cite{[BF]} is a cusp form, then completing this function by a certain sum which is based on negative norm vectors of $K$ still yields an automorphic form of weight $k$, with the same singularities, which is harmonic with respect to the weight $k$ Laplacian on $K_{\mathbb{R}}+iC$. \label{Bor143}
\end{thm}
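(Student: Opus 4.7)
The plan is to exhibit the claimed function as the Fourier expansion of a theta lift of $F$, and to compute that expansion by unfolding. First I would attach to $L$ its Siegel theta kernel $\Theta_{L}(\tau,Z)\in\mathbb{C}[D_{L}]$, depending holomorphically on $Z\in K_{\mathbb{R}}+iC$ and transforming in $\tau$ with weight $\kappa$ and representation $\overline{\rho_{L}}$ under $Mp_{2}(\mathbb{Z})$, while being invariant under the discriminant kernel $\Gamma_{L}$ in $Z$. Set
\[
\Phi(Z,F):=\int_{\mathcal{F}}^{\mathrm{reg}}\bigl\langle F(\tau),\Theta_{L}(\tau,Z)\bigr\rangle y^{\kappa}\,\frac{dx\,dy}{y^{2}},
\]
where $\mathcal{F}$ is a fundamental domain for $Mp_{2}(\mathbb{Z})$ on $\mathcal{H}$ and the regularization is the one of Harvey--Moore: integrate against an extra factor $y^{-s}$ over $\mathcal{F}\cap\{y\leq T\}$, take $T\to\infty$, and analytically continue in $s$ to $s=0$. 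Automorphy of $\Phi(\cdot,F)$ under $\Gamma_{L}$ then follows from invariance of the kernel, so only the Fourier expansion in $Z$ has to be identified.

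The expansion is obtained by unfolding. The primitive isotropic vector $z$ yields a Jacobi-type decomposition of $\Theta_{L}$ as a sum over $L^{*}/L$ of products of the smaller theta function $\Theta_{K}$ with a one-variable theta in the $\mathbb{R}z$-direction. Poisson summation in that direction exhibits $\Theta_{L}$ as an average of a simpler kernel over $\Gamma_{\infty}\backslash Mp_{2}(\mathbb{Z})$, so that $\mathcal{F}$ unfolds to the strip $\{0\leq x<1,\ y>0\}$. The $x$-integral then selects the $q^{n}$-Fourier coefficients $c_{\eta,n}$ of $F$, and the remaining $y$-integral produces, for each $\lambda\in K^{*}$ with $(\lambda,W)>0$ and $\lambda^{2}/2=n$, a Gamma factor which together with the exponential $\mathbf{e}\bigl(n(\lambda,Z)\bigr)$ yields the weight $n^{k-1}$ after collecting multiples of a primitive $\lambda$. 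The matching condition $\eta|_{z^{\perp}}=\pi^{*}\lambda$ then reproduces the inner sum and the phase $\mathbf{e}\bigl(n(\eta,\zeta)\bigr)$, giving the main double sum in the statement.

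The hard part is the constant term and the boundary cases. The $\lambda=0$ contribution, after the $s$-regularization, involves the special value $\zeta(s+1-k)\big|_{s=0}=-B_{k}/k$, and a local analysis at the cusp of $\Gamma_{L}$ attached to $z$, tracking residues modulo $M$ of $\varepsilon$ and $\delta$ coming from $(z,L)=M\mathbb{Z}$, converts this into the Bernoulli-polynomial sum $\frac{M^{k-1}}{2k}\sum_{\varepsilon,\delta}B_{k}(\varepsilon/M)\mathbf{e}(\varepsilon\delta/M)c_{\delta z/M+L,0}$. For weakly holomorphic $F$ the principal part, pulled through the $y$-integral, contributes terms concentrated on the special divisors $\lambda^{\perp}$ with $\lambda^{2}/2<0$; combined with the $n^{k-1}$-weighting this produces poles of order exactly $k$. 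The case $k=1$ requires adding a theta integral involving $F_{K}$ because a boundary term that would otherwise cancel now diverges logarithmically, and for a harmonic weak Maass $F$ with $\xi F$ cuspidal the non-holomorphic part of $F$ contributes a rapidly convergent sum indexed by negative-norm $\lambda\in K^{*}$ whose harmonicity under the weight-$k$ Laplacian on $K_{\mathbb{R}}+iC$ descends from that of $\Theta_{L}$. The bulk of the work lies in proving uniform convergence of the regularized integral away from the special divisors and in justifying the interchange of summation and integration at every step of the unfolding.
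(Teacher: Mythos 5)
You should first be aware that the paper does not prove this statement at all: it is quoted verbatim as Theorem 14.3 of Borcherds \cite{[B]} (with the final assertion about harmonic weak Maass forms deferred to \cite{[Br1]} and \cite{[Ze2]}), so there is no in-paper argument to compare against. What you have written is, in outline, a pr\'ecis of Borcherds' own proof --- regularized pairing against a Siegel theta kernel, unfolding along the primitive isotropic vector $z$ via Poisson summation in the $\mathbb{R}z$-direction, extraction of Fourier coefficients by the $x$-integral, and a Hurwitz-zeta/Bernoulli evaluation of the $\lambda=0$ term. That is the correct roadmap, and identifying it is worth something.

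As a proof, however, the proposal has genuine gaps beyond the deferred convergence issues you acknowledge at the end. First, the Siegel theta kernel $\Theta_{L}(\tau,Z)$ is \emph{not} holomorphic in $Z$; it is only real-analytic, and the holomorphy of the lift in $Z$ is precisely the content of the theorem, established only after the Fourier expansion has been computed explicitly --- you cannot take it as a property of the kernel. Second, the plain Siegel theta function of a signature $(2,b_{-})$ lattice transforms in $\tau$ with weight $1-\frac{b_{-}}{2}$, not with $\kappa=1-\frac{b_{-}}{2}+k$; to make your pairing $\langle F,\Theta_{L}\rangle y^{\kappa}\,d\mu$ invariant one must insert into the kernel a homogeneous polynomial of degree $k$ on the positive definite part of $L_{\mathbb{R}}$ (depending on $Z$), and it is exactly this insertion that produces an automorphic form of weight $k$ rather than weight $0$ in $Z$ and the factor $n^{k-1}$ after summing over multiples of primitive $\lambda$. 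Your sketch asserts the right transformation behaviour without supplying the mechanism, and the subsequent unfolding computation changes nontrivially once the polynomial is present. Finally, the statement that the poles of the meromorphic lift have order exactly $k$ along the special divisors, the precise form of the extra theta integral needed at $k=1$, and the harmonicity claim for weak Maass inputs each require separate arguments (the last one is not even in \cite{[B]}); none of these is reduced to a checkable step in your outline. In short: right strategy, but the parts you label as ``the hard part'' and ``the bulk of the work'' are in fact the entire proof, and one structural ingredient (the degree-$k$ polynomial in the kernel) is missing.
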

The last assertion of Theorem \ref{Bor143} is not contained directly in Theorem 14.3 of \cite{[B]}, but follows from the results of \cite{[Br1]} and \cite{[Ze2]}. We remark that the exact form of the poles from Theorem \ref{Bor143} can be described using the Fourier coefficients of negative indices of the lifted modular form $F$, so that one can say more about the poles in Theorems \ref{main}, \ref{Stf}, and \ref{level} and Proposition \ref{mainchar} below. We leave these details for further investigation.

\smallskip

Before we present the main result of this paper, we need to introduce a little more notation. Given $\varepsilon\in\{\pm1\}$, $N\in\mathbb{N}$, $d\in(\mathbb{Z}/N\mathbb{Z})^{\times}$, and  $f\in\mathcal{A}_{k+1/2}(\widetilde{\Gamma}_{N},\psi^{\varepsilon})$, we denote $c_{n}^{(d)}$ the $n$\textsuperscript{th} Fourier coefficient of the image of $f$ under the diamond operator $\langle d \rangle_{\varepsilon}$ defined by
\begin{equation}
\langle d \rangle_{\varepsilon}f:=\mathcal{L}_{\varepsilon}^{-1}\big((\mathcal{L}_{\varepsilon}f)\big|_{k+1/2,\rho_{\varepsilon}}A_{0,d}\big)
\in\mathcal{A}_{k+1/2}^{+}(\widetilde{\Gamma}_{N},\psi^{\varepsilon}), \label{diamond}
\end{equation}
where $A_{0,d} \in Mp_{2}(\mathbb{Z})$ is defined in Proposition \ref{LNphi}. These Fourier coefficients are functions of $y$ in general, unless $f \in \mathcal{M}_{k+1/2}^{!,+}(\widetilde{\Gamma}_{N},\psi^{\varepsilon})$, where they are constants. In correspondence with Remark \ref{L-}, we shall need only the definition using $\mathcal{L}_{+}$ in Equation \eqref{diamond} for the results of this Section. However, the next Section (in particular the proof of Theorem \ref{Stf}) will use Equation \eqref{diamond} with the operator $\mathcal{L}_{-}$, as well as the following
\begin{prop}
The function $\langle d \rangle_{\varepsilon}f$ coincides with $f\big|_{k+1/2,\psi^{\varepsilon}}A$ for any $A\in\widetilde{\Gamma}_{0}(16N)$ having lower right entry in $d+N\mathbb{Z}$. This extends the definition of $\langle d \rangle_{\varepsilon}$ to all of $f\in\mathcal{A}_{k+1/2}(\widetilde{\Gamma}_{N},\psi^{\varepsilon})$. In the case where $4|N$, the two spaces $\mathcal{A}_{k+1/2}(\widetilde{\Gamma}_{N},\psi^{\pm1})$ coincide by Proposition \ref{char4notdivN}, and the two functions $\langle d \rangle_{+}f$ and $\langle d \rangle_{-}f$ differ by a multiplicative constant $\big(\frac{-1}{d}\big)$. \label{diamSV}
\end{prop}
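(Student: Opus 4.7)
The plan is to reduce the identification to a scalar computation on each $\chi$-isotypic component provided by Proposition \ref{char4notdivN}. Write $\varepsilon=(-1)^{k}\xi$ for $\xi\in\{\pm1\}$ (Remark \ref{psi2k+xieps}), and decompose an element $f\in\mathcal{A}^{+}_{k+1/2}(\widetilde{\Gamma}_{N},\psi^{\varepsilon})$ as $f=\sum_{\chi(-1)=\xi}f_{\chi}$ with each $f_{\chi}$ lying in $\mathcal{A}^{+}_{k+1/2}(\widetilde{\Gamma}_{0}(4N),\psi^{2k+1}\omega_{\chi})$. By Theorem \ref{rho1lift}, $\mathcal{L}_{\varepsilon}f_{\chi}$ lies in $\mathcal{A}_{k+1/2}(\widetilde{\Gamma}_{0}(N),\chi\otimes\rho_{\varepsilon})$; since $A_{0,d}$ projects into $\widetilde{\Gamma}_{0}(N)$ with lower right entry $\equiv d\pmod{N}$, applying $|_{k+1/2,\rho_{\varepsilon}}A_{0,d}$ (omitting the character $\chi$) multiplies $\mathcal{L}_{\varepsilon}f_{\chi}$ by $\chi(d)$, whence $\langle d\rangle_{\varepsilon}f_{\chi}=\chi(d)f_{\chi}$.

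Next I would verify that $f_{\chi}|_{k+1/2,\psi^{\varepsilon}}A$ also equals $\chi(d)f_{\chi}$ for any $A\in\widetilde{\Gamma}_{0}(16N)$ with lower right entry $d_{A}\equiv d\pmod{N}$. The invariance of $f_{\chi}$ under $\widetilde{\Gamma}_{0}(4N)$ with character $\psi^{2k+1}\omega_{\chi}$ gives $f_{\chi}|_{k+1/2,\psi^{\varepsilon}}A=\psi^{2k+1-\varepsilon}(A)\omega_{\chi}(d_{A})f_{\chi}$, and a short exponent check (as in Remark \ref{psi2k+xieps}) shows $2k+1-\varepsilon\equiv 1-\xi\pmod{4}$. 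Now $\omega_{\chi}(d_{A})=\left(\frac{\xi}{d_{A}}\right)\chi(d)$ since $(4/d_{A})=1$ and $d_{A}\equiv d\pmod{N}$, while $\psi^{1-\xi}(A)$ equals $1$ when $\xi=1$ and $\psi^{2}(A)=\left(\frac{-1}{d_{A}}\right)$ when $\xi=-1$ (using $(c/d_{A})^{2}=1$ from $\det A=1$, together with $\overline{\varepsilon_{d_{A}}}^{\,2}=\left(\frac{-1}{d_{A}}\right)$, both valid for $A\in\widetilde{\Gamma}_{0}(4)$). In either case the product collapses to $\chi(d)f_{\chi}$, matching $\langle d\rangle_{\varepsilon}f_{\chi}$; summing over $\chi$ proves the first claim on the plus-space. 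For the extension to all of $\mathcal{A}_{k+1/2}(\widetilde{\Gamma}_{N},\psi^{\varepsilon})$, any two choices $A,A'\in\widetilde{\Gamma}_{0}(16N)$ with identical lower right residues modulo $N$ satisfy $A'A^{-1}\in\widetilde{\Gamma}_{N}$ by a direct matrix computation (both lower-left entries lie in $4N\mathbb{Z}$, and $ad\equiv 1\pmod{4N}$ from $\det A=1$ forces the lower right entry of $A'A^{-1}$ to be $\equiv 1\pmod{N}$), so $f|_{k+1/2,\psi^{\varepsilon}}A=f|_{k+1/2,\psi^{\varepsilon}}A'$ by $\widetilde{\Gamma}_{N}$-invariance of $f$, giving a well-defined extension.

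Finally, when $4|N$ the spaces $\mathcal{A}_{k+1/2}(\widetilde{\Gamma}_{N},\psi^{\pm 1})$ coincide by Proposition \ref{char4notdivN}, so both $\langle d\rangle_{\pm}f$ are defined on the same $f$. Their ratio $\langle d\rangle_{+}f/\langle d\rangle_{-}f$ reads off directly from the slash definition as $\psi^{-2}(A)$, which equals $\left(\frac{-1}{d_{A}}\right)$ by the Kronecker-symbol computation above; and since $4|N$ combined with $d_{A}\equiv d\pmod{N}$ gives $d_{A}\equiv d\pmod{4}$, this further reduces to $\left(\frac{-1}{d}\right)$, as claimed. The main obstacle is the character arithmetic of the second step, where the four ingredients $\psi^{2k+1}$, $\psi^{\varepsilon}$, $\omega_{\chi}$, and $\chi$ must combine to the single scalar $\chi(d)$; the parity condition $\chi(-1)=\xi$ is precisely what causes the Kronecker-symbol contributions to cancel in both the $\xi=1$ and $\xi=-1$ cases.
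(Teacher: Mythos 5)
Your proof is correct, but it takes a genuinely different route from the paper's. The paper never decomposes $f$ by characters: it chooses the representative $A_{0,d}$ inside $\widetilde{\Gamma}_{0}(4N)\cap\widetilde{\Gamma}^{0}(4)$, where $\rho_{\varepsilon}(A_{0,d})$ is the scalar $\psi^{\varepsilon}(A_{0,d})$ by Equation \eqref{rho1Gamma04}, and then invokes the explicit description of $\mathcal{L}_{\varepsilon}^{-1}$ from the proof of Theorem \ref{rho1lift} (conjugation by $A_{0}$) to get the operator identity $\langle d\rangle_{\varepsilon}f=f\big|_{k+1/2,\psi^{\varepsilon}}(A_{0}A_{0,d}A_{0}^{-1})$ directly, observing that these conjugates exhaust the slash operators of $\widetilde{\Gamma}_{0}(16N)$ (which is also where the level $16N$ comes from). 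You instead diagonalize both operators simultaneously: each side acts on the $\chi$-isotypic piece $f_{\chi}$ as the scalar $\chi(d)$ --- the left side by Theorem \ref{rho1lift}, the right side by the character computation $\psi^{2k+1-\varepsilon}(A)\,\omega_{\chi}(d_{A})=\chi(d)$. Your eigenvalue check is correct in both parity cases, and your route has the merit of making explicit both the relation $\langle d\rangle_{\varepsilon}f_{\chi}=\chi(d)f_{\chi}$ (which the paper only extracts later, in the proof of Proposition \ref{mainchar}) and the well-definedness of the extension (the verification that $A'A^{-1}\in\widetilde{\Gamma}_{N}$, which the paper compresses into ``the extendability is now immediate''). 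Two small points: you should say a word about why the isotypic decomposition of Proposition \ref{char4notdivN} is compatible with the plus-space condition, i.e.\ why each $f_{\chi}$ again lies in the plus-space; this follows by transporting the standard character decomposition of $\mathcal{A}_{k+1/2}(\widetilde{\Gamma}_{1}(N),\rho_{\varepsilon})$ back through the isomorphism $\mathcal{L}_{\varepsilon}$, and the paper itself uses it without comment just before Proposition \ref{mainchar}. Also note that your computation in fact establishes the identity for every $A\in\widetilde{\Gamma}_{0}(4N)$ with the prescribed lower right entry, slightly more than what the proposition claims.
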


\begin{proof}
We may choose our representative for $A_{0,d}$ in Equation \eqref{diamond} from the group $\widetilde{\Gamma}_{0}(4N)\cap\widetilde{\Gamma}^{0}(4)$. In this case $\rho_{\varepsilon}(A_{0,d})$ is just the scalar $\psi^{\varepsilon}(A_{0,d})$ by Equation \eqref{rho1Gamma04}, so that we may replace the vector-valued operator $\big|_{k+1/2,\rho_{\varepsilon}}A_{0,d}$ by the scalar-valued one $\big|_{k+1/2,\psi^{\varepsilon}}A_{0,d}$. The proof of Theorem \ref{rho1lift} now shows that $\langle d \rangle_{\varepsilon}f$ can be written just as $f\big|_{k+1/2,\psi^{\varepsilon}}A$ where $A=A_{0}A_{0,d}A_{0}^{-1}$ lies in $\widetilde{\Gamma}_{0}(16N)$ and has lower right entry in $d+N\mathbb{Z}$. Moreover, the slash operator associated with any element of that group may be obtained in this way, which proves the first assertion. The extendability is now immediate. Assuming that $4|N$, so that the residue of the odd number $d$ is well-defined modulo 4, the two operators $\big|_{k+1/2,\psi^{\pm1}}A$ with $A\in\widetilde{\Gamma}_{0}(16N)$ as above differ by the factor $\psi^{2}(A)=\big(\frac{-1}{d}\big)$. This completes the proof of the proposition.
\end{proof}

The main result will be more conveniently written in terms of the \emph{partial zeta functions} defined by $\zeta_{N}^{(d)}(s)=\sum_{n \equiv d(\mathrm{mod\ }N)}\frac{1}{n^{s}}$ (with $n$ positive), where $(d,N)=1$. We can now prove the following
\begin{thm}
Given $f\in\mathcal{M}_{k+1/2}^{+}(\widetilde{\Gamma}_{N},\psi)$ with $k\in\mathbb{N}$, the function $\mathcal{S}_{1}^{(N)}f:\mathcal{H}\to\mathbb{C}$ defined by
\[\mathcal{S}_{1}^{(N)}f(\sigma):=-\sum_{\substack{d=1 \\ (d,N)=1}}^{N}\frac{\zeta_{N}^{(d)}(1-k)c_{0}^{(d)}}{2}+\sum_{l=1}^{\infty}\Bigg[\sum_{\substack{d|l \\ (d,N)=1}}d^{k-1}c_{\frac{l^{2}}{d^{2}}}^{(d)}\Bigg]\mathbf{e}(l\sigma)\] lies in $\mathcal{M}_{2k}\big(\Gamma_{1}^{\sqrt{1}}(N)\big)$. If $f\in\mathcal{M}_{k+1/2}^{!,+}(\widetilde{\Gamma}_{N},\psi)$, then $\mathcal{S}_{1}^{(N)}(f)$ is a meromorphic modular form, with poles of order $k$ at CM points. \label{main}
\end{thm}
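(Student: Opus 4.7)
The strategy is to chain together the two lifts constructed in Sections \ref{2dLift} and \ref{2N^2Lift} and then apply the Borcherds theta lift of Theorem \ref{Bor143}, matching the resulting Fourier expansion with the claimed Shimura-type formula.

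First, I would apply Theorem \ref{rho1lift} to pass from $f\in\mathcal{M}_{k+1/2}^{+}(\widetilde{\Gamma}_{N},\psi)$ to $\mathcal{L}_{+}f\in\mathcal{M}_{k+1/2}\bigl(\widetilde{\Gamma}_{1}(N),\rho_{1}\bigr)$, and then Proposition \ref{LNphi} to obtain $F:=\mathcal{L}_{N}(\mathcal{L}_{+}f)\in\mathcal{M}_{k+1/2}\bigl(Mp_{2}(\mathbb{Z}),\rho_{L_{1}(N)}\bigr)$, using that $\rho_{1}\otimes\rho_{B}(N)$ equals the Weil representation of the signature-$(2,1)$ lattice $L_{1}(N)$ from Section \ref{L1N}. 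Theorem \ref{Bor143} then applies with $L=L_{1}(N)$; taking $z=E$ as the primitive isotropic vector and $\zeta=F$, the sublattice $K=(L\cap z^{\perp})/\mathbb{Z}z$ is the rank-one lattice generated by $H$, and $K_{\mathbb{R}}+iC$ is canonically identified with $\mathcal{H}$. The Borcherds weight $k$ translates to the classical weight $2k$ via the symmetric-square isomorphism $PSL_{2}(\mathbb{R})\cong SO^{+}(V)$ of Equation \eqref{SL2RM2R0}, and the group of modularity is the discriminant kernel of $L_{1}(N)$, namely $\Gamma_{1}^{\sqrt{1}}(N)$ by Remark \ref{disckerL1N}.

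Next I would match the Borcherds Fourier expansion against $\mathcal{S}_{1}^{(N)}f$ term by term. Parametrizing $\lambda\in K^{*}=\tfrac{1}{2}\mathbb{Z}H$ in the positive cone as $\lambda=mH/2$ with $m\geq1$, the pairing $(\lambda,Z)$ equals $m\sigma$ under $Z\leftrightarrow\sigma$, so $\mathbf{e}(n(\lambda,Z))=q^{nm}$; setting $l=nm$ and $d=n$ recasts the double sum as $\sum_{l\geq1}q^{l}\sum_{d\mid l}d^{k-1}\cdot(\text{inner sum at }m=l/d)$. The inner sum runs over $\eta\in L^{*}/L$ whose image in $K^{*}/K$ is the class of $(l/d)H/2$, weighted by $\mathbf{e}(d(\eta,F))$; for $\eta$ with $D_{B}(N)$-coordinate $cF+rE/N$ this weight is $\mathbf{e}(dr/N)$. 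Expanding $F=\mathcal{L}_{N}\mathcal{L}_{+}f$ via Equation \eqref{LNdef} and collapsing the $r$-sum by additive-character orthogonality leaves a sum over $c$ with $(c,d,N)=1$ of the $(l^{2}/d^{2})$-th Fourier coefficient of the $\mathfrak{e}_{\alpha}$-component of $\mathcal{L}_{+}f\big|_{k+1/2,\rho_{1}}A_{c,d}$, with $\alpha\equiv l/d\pmod{2}$ forced by the restriction $\eta|_{z^{\perp}}=\pi^{*}\lambda$. Using the cusp-width statement of Corollary \ref{LNFour} to restrict the $c$-values that can contribute at this exponent, Equation \eqref{Lepsdef}, and Proposition \ref{diamSV} (which identifies the vector-valued diamond operator with a scalar-level slash by $\widetilde{\Gamma}_{0}(16N)$), the surviving sum collapses to the single diamond-twisted Fourier coefficient $c^{(d)}_{l^{2}/d^{2}}$; the coprimality $(d,N)=1$ is enforced by the primitivity constraint in \eqref{LNdef}. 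The constant term produced by Theorem \ref{Bor143} reduces to the stated partial-zeta expression through the Hurwitz identity $\zeta_{N}^{(d)}(1-k)=-N^{k-1}B_{k}(d/N)/k$.

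Finally, if $f$ is only weakly holomorphic then both $\mathcal{L}_{+}$ and $\mathcal{L}_{N}$ preserve meromorphy at the cusp, so $F$ belongs to $\mathcal{M}_{k+1/2}^{!}\bigl(Mp_{2}(\mathbb{Z}),\rho_{L_{1}(N)}\bigr)$, and the meromorphy assertion of Theorem \ref{Bor143} produces a weight-$2k$ modular form with poles of order $k$ along the special divisors of $K_{\mathbb{R}}+iC$, which for the signature-$(2,1)$ lattice $L_{1}(N)$ are precisely the CM points of $\mathcal{H}$. The principal obstacle is the Fourier-matching step of the middle paragraph: showing that the $c$-sum surviving the $r$-orthogonality collapses to a single diamond-twisted coefficient requires careful accounting of how the two lifts $\mathcal{L}_{+}$ and $\mathcal{L}_{N}$ redistribute the Fourier data of $f$ across the discriminant group $D_{1}\oplus D_{B}(N)$, combined with the cusp-width restrictions of Corollary \ref{LNFour} and the scalar realization of $\langle d\rangle_{+}$ from Proposition \ref{diamSV}.
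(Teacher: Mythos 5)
Your overall strategy is exactly the paper's: compose $\mathcal{L}_{+}$ and $\mathcal{L}_{N}$, apply Theorem \ref{Bor143} to the resulting form with $L=L_{1}(N)$, $z=E$, $\zeta=F$, and match Fourier expansions against the claimed formula. Two points, however, need repair.

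First, the Fourier-matching step you yourself flag as the principal obstacle is resolved differently (and more simply) than you describe. The condition $\eta|_{z^{\perp}}=\pi^{*}\lambda$ in Theorem \ref{Bor143} forces $(\eta,E)=0$, so the $F$-component of $\eta$ in $D_{B}(N)$ vanishes: the only components of $\mathcal{L}_{N}\mathcal{L}_{+}f$ that ever enter are $(\mathcal{L}_{N}\mathcal{L}_{+}f)_{0,r}$ with $c=0$, and there is no residual sum over $c$ to be restricted by Corollary \ref{LNFour}. What remains is the Borcherds weight $\mathbf{e}(n(\eta,F))=\mathbf{e}(nr/N)$ summed against the definition \eqref{LNdef} of $(\mathcal{L}_{N}\mathcal{L}_{+}f)_{0,r}$ as $\sum_{(d',N)=1}\mathbf{e}(-d'r/N)\,(\mathcal{L}_{+}f)\big|A_{0,d'}$; the orthogonality $\frac{1}{N}\sum_{r}\mathbf{e}\big((n-d')r/N\big)$ picks out the single term $d'\equiv n$, which Equation \eqref{diamond} identifies with $\langle n\rangle_{+}f$, giving $c^{(d)}_{l^{2}/d^{2}}$ after setting $d=n$ and $l=mn$. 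Corollary \ref{LNFour} is invoked in the paper only to confirm that the exponent $\frac{m^{2}}{4}$ is not excluded from the relevant expansion, not to eliminate any summands; as written, your account of a surviving $c$-sum collapsing via cusp-width restrictions does not correspond to a step that actually occurs, so the key identity is asserted rather than proved.

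Second, you do not treat $k=1$, which is included in the hypothesis $k\in\mathbb{N}$. For $k=1$ Theorem \ref{Bor143} requires adding a theta integral involving $F_{K}$, and one must show this contribution vanishes: the paper computes $(\mathcal{L}_{N}\mathcal{L}_{+}f)_{K}=\sum_{d,r}\mathbf{e}(-dr/N)\mathcal{L}_{+}(\langle d\rangle_{+}f)$, observes that the sum over $r$ annihilates every term unless $N=d=1$, and that in the remaining case the function lies in $\mathcal{A}_{3/2}(Mp_{2}(\mathbb{Z}),\rho_{1})$, which is trivial by Remark \ref{VVtriv}. Without this, the holomorphic Fourier expansion you write down is not yet known to be the full lift when $k=1$.
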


\begin{proof}
Take $L=L_{1}(N)$ in Theorem \ref{Bor143}, with $z=E$ and $\zeta=F$. Hence
$b_{-}=1$, $K$ is $\mathbb{Z}H$ with its dual spanned by $\frac{1}{2}H$, and
$M=N$. The variety $K_{\mathbb{R}}+iC$ is just $\mathcal{H}$ (with no Weyl
chambers), and the pairing of $\frac{m}{2}H$ with the element representing
$\sigma\in\mathcal{H}$ is just $m\sigma$. If we now assume that $f\in\mathcal{M}_{k+1/2}^{+,!}(\widetilde{\Gamma}_{N},\psi)$, then $\mathcal{L}_{N}\mathcal{L}_{+}f\in\mathcal{M}_{k+1/2}^{!}\big(Mp_{2}(\mathbb{Z}),\rho_{1}\otimes\rho_{B}(N)\big)$ by Theorem \ref{rho1lift} and Proposition \ref{LNphi}. As $\rho_{1}(N) \cong \rho_{1} \otimes \rho_B(N)$, we may apply Theorem \ref{Bor143} to $(\mathcal{L}_{N}\mathcal{L}_{+}f)/N$. An automorphic form of weight $k$ on a 1-dimensional Grassmannian is the same as an automorphic form of weight $2k$ on $\mathcal{H}$, and Remark \ref{disckerL1N} shows that the discriminant kernel contains $\Gamma_{1}^{\sqrt{1}}(N)$ (in fact, the latter group is the full discriminant kernel by Remark \ref{SAutL1N}).

Thus, we need to substitute the Fourier coefficients of
$(\mathcal{L}_{N}\mathcal{L}_{+}f)/N$ into the expressions from Theorem
\ref{Bor143}. Note that for $\lambda=\frac{m}{2}H$ we have $\frac{\lambda^{2}}{2}=\frac{m^{2}}{4}$, and the elements $\eta \in
L^{*}$ which restrict to $\pi^{*}\lambda$ on $z^{\perp}$ are those of the form
$\frac{m}{2}H+\frac{r}{N}E$, where considering these elements modulo $L$ means
that $r$ is taken in $\mathbb{Z}/N\mathbb{Z}$. Now, the Fourier coefficients having index $\frac{m}{2}H+\frac{r}{N}E \in D_{1}(N)$ are $\frac{1}{N}$ times the Fourier coefficients of the function denoted $(\mathcal{L}_{N}\mathcal{L}_{+}f)_{0,r,u}$ in the terminology of Corollary \ref{LNFour}, where $u$ is $\mathfrak{e}_{\delta}$ with $\delta$ being the element of $\{0,1\}$ which is congruent to $m$ modulo 2. Moreover, Equation \eqref{LNdef} or the proof of Corollary \ref{LNFour} shows that these Fourier coefficients are the Fourier coefficients of the scalar-valued function $\frac{1}{N}\sum_{d}\mathbf{e}\big(-\frac{dr}{N}\big)(\mathcal{L}_{+}f)_{0,d,u}$ where $d$ runs over the elements of $(\mathbb{Z}/N\mathbb{Z})^{\times}$. Note that the condition $n\in\frac{cr}{N}+\alpha+\mathbb{Z}$ from Corollary \ref{LNFour} is satisfied for $n=\frac{m^{2}}{4}$, since $c=0$ and the eigenvalue $\mathbf{e}(\alpha)$ is $\mathbf{e}\big(\frac{\delta}{4}\big)=\mathbf{e}\big(\frac{m^{2}}{4}\big)$. Hence no vanishing is implied by this corollary. Equation \eqref{diamond} (with $\varepsilon=+$) now implies that the $d$\textsuperscript{th} summand here is $\mathbf{e}\big(-\frac{dr}{N}\big)$ times the coefficient of $\mathfrak{e}_{\delta}$ in $\mathcal{L}_{+}(\langle d \rangle_{+}f)$. The definition of $\mathcal{L}_{+}$ in Theorem \ref{rho1lift} and Equation \eqref{fjdef} now shows that the Fourier coefficient of $q^{m^{2}/4}$ in the part of $\mathcal{L}_{+}(\langle d \rangle_{+}f)$ appearing in front of $\mathfrak{e}_{\delta}$ is just $c_{m^{2}}^{(d)}$ (the index $\frac{m^{2}}{4}$ changes to $m^{2}$ because of the 4 multiplying the argument $\tau$ in the translation formula). The second summand in the expression from Theorem \ref{Bor143} then becomes
\[\sum_{n=1}^{\infty}n^{k-1}\sum_{m=1}^{\infty}\mathbf{e}(mn\sigma)\sum_{r\in\mathbb{Z}/N\mathbb{Z}}
\mathbf{e}\bigg(\frac{nr}{N}\bigg)\sum_{\substack{d\in\mathbb{Z}/N\mathbb{Z} \\ (d,N)=1}}\mathbf{e}\bigg(\frac{-dr}{N}\bigg)\frac{c_{m^{2}}^{(d)}}{N}\]
Summing over $r\in\mathbb{Z}/N\mathbb{Z}$ (to get $n=d$) and substituting $l=mn$ then give us the desired second term.

The constant term involves Fourier coefficients with index $\frac{\delta}{N}E$, so that a similar analysis shows that this constant term equals
\[\frac{N^{k-1}}{2k}\sum_{\varepsilon=1}^{N}B_{k}\bigg(\frac{\varepsilon}{N}\bigg)\sum_{\delta\in\mathbb{Z}/N\mathbb{Z}}
\mathbf{e}\bigg(\frac{\varepsilon\delta}{N}\bigg)\sum_{\substack{d\in\mathbb{Z}/N\mathbb{Z} \\ (d,N)=1}}\mathbf{e}\bigg(\frac{-d\delta}{N}\bigg)\frac{c_{0}^{(d)}}{N}.\] We sum over $\delta$, so that only terms with $\varepsilon=d$ remain, and use Equation (11) on page 27 of \cite{[EMOT]} to replace $N^{k-1}B_{k}\big(\frac{d}{N}\big)/k$ by $-\zeta_{N}^{(d)}(1-k)$. This yields the required constant term for $k\geq2$. When $k=1$, note that $K$ is the lattice $L_{1}$ whose Weil representation is $\rho_{1}$, and the isotropic subgroup is generated by $\frac{1}{N}z$. Thus \[(\mathcal{L}_{N}\mathcal{L}_{+}f)_{K}=\sum_{r\in\mathbb{Z}/N\mathbb{Z}}(\mathcal{L}_{+}f)_{0,r}=
\sum_{r\in\mathbb{Z}/N\mathbb{Z}}\sum_{d\in(\mathbb{Z}/N\mathbb{Z})^{\times}}\mathbf{e}\bigg(-\frac{dr}{N}\bigg)\mathcal{L}_{+}f\big|_{3/2,\rho_{1}}A_{0,d}= \sum_{d,r}\mathbf{e}\bigg(-\frac{dr}{N}\bigg)\mathcal{L}_{+}(\langle d \rangle_{+}f).\] But the coefficient of $\mathcal{L}_{+}(\langle d \rangle_{+}f)$ here vanishes unless $N|d$, which implies the vanishing of $(\mathcal{L}_{N}\mathcal{L}_{+}f)_{K}$ unless $N=d=1$, since the conditions $N|d$ and $d\in(\mathbb{Z}/N\mathbb{Z})^{\times}$ are contradictory. On the other hand, in the remaining case the function $(\mathcal{L}_{N}\mathcal{L}_{+}f)_{K}$ lies in $\mathcal{A}_{3/2}(Mp_{2}(\mathbb{Z}),\rho_{1})$, a space which is trivial by Remark \ref{VVtriv}. Thus the constant term remains the same also for $k=1$. The assertion now follows from Theorem \ref{Bor143}, since the holomorphicity there is also at the cusps, and since the special divisors on $\mathcal{H}$ which arise from our lattice $L_{1}(N)$ are precisely the CM points on $\mathcal{H}$. This proves the theorem.
\end{proof}

\begin{rmk}
The last assertion of Theorem \ref{Bor143} allows us to extend Theorem \ref{main} also to the case where $f$ is a harmonic weak Maass form of weight $k+\frac{1}{2}$ such that $\xi_{k+1/2}f$ is a cusp form. Indeed, the completion is based on negative norm elements of $K^{*}$, but in our case $K$ is positive definite. Hence no such completion appears, and only the (meromorphic) expression which is based on the positive part of $f$ remains (such an argument already appears, for the case of the classical lift from weight $\frac{1}{2}$ to weight 0, in \cite{[BO]}). As the weight $\frac{3}{2}-k$ of $\xi_{k+1/2}f$ is negative for $k\geq2$, and there are no cusp forms of negative weight, there are no such harmonic weak Maass forms with non-trivial cuspidal $\xi$-image. This extension is therefore non-trivial only for $k=1$. The same assertion holds also for Propositions \ref{mainchar} and \ref{highlev} and Theorems \ref{Stf} and \ref{level} below. \label{Maass}
\end{rmk}

\begin{rmk}
If $f$ is a cusp form then so is $\mathcal{L}_{N}\mathcal{L}_{+}f$, and $\mathcal{S}_{1}^{(N)}f$ has no constant term at the cusp $\infty$. This, however, does not imply that $\mathcal{S}_{1}^{(N)}f$ is a cusp form, since non-zero constant terms may appear at the other cusps. One can show, using results relating the growth of the Fourier coefficients at $\infty$ and vanishing at the other cusps, that the $\mathcal{S}_{1}^{(N)}$-image of a cusp form is a cusp form wherever $k\geq2$. On the other hand, if $k=1$ then elements of the space spanned by unary theta series of weight $\frac{3}{2}$ are sent to Eisenstein series, while the $\mathcal{S}_{1}^{(N)}$-image of the orthogonal complement of this space is contained in the space of cusp forms (see \cite{[C1]}, and also \cite{[P]} for a decomposition algorithm). \label{cusptocusp}
\end{rmk}

\smallskip

Recalling the decomposition of $\mathcal{A}_{k+1/2}(\widetilde{\Gamma}_{N},\psi)$ (and its
plus-subspace) according to characters as in Proposition \ref{char4notdivN} (which clearly preserves holomorphicity and weak holomorphicity), we also obtain the following proposition.
\begin{prop}
Given $f(\tau)=\sum_{n=0}^{\infty}c_{n}q^{n}\in\mathcal{M}_{k+1/2}^{+}(\widetilde{\Gamma}_{0}(4N),\psi^{2k+1}\omega_{\chi})$ for some Dirichlet character $\chi$ modulo $N$ satisfying $\chi(-1)=(-1)^{k}$, we have
\begin{equation}
\mathcal{S}_{1}^{(N)}f(\sigma)=-\frac{L(1-k,\chi)}{2}c_{0}+\sum_{l=1}^{\infty}\Bigg[\sum_{\substack{d|l \\ (d,N)=1}}d^{k-1}\chi(d)c_{\frac{l^{2}}{d^{2}}}\Bigg]\mathbf{e}(l\sigma)\in\mathcal{M}_{2k}\big(\Gamma_{0}(N),\chi^{2}\big).
\end{equation}
In case the modular form $f$ is in $\mathcal{M}_{k+1/2}^{!,+}(\widetilde{\Gamma}_{0}(4N),\psi^{2k+1}\omega_{\chi})$, its lift $\mathcal{S}_{1}^{(N)}f$ is described by the same Fourier expansion, and it still satisfies the modularity condition with character $\chi^{2}$ with respect to $\Gamma_{0}(N)$, but it is now meromorphic with poles of order $k$ at CM points. \label{mainchar}
\end{prop}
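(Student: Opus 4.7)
The plan is to derive Proposition \ref{mainchar} as a character-decomposition refinement of Theorem \ref{main}. Given $f\in\mathcal{M}_{k+1/2}^{+}(\widetilde{\Gamma}_{0}(4N),\psi^{2k+1}\omega_{\chi})$, I would first compute the action of the diamond operator $\langle d\rangle_{+}$ explicitly on $f$, then substitute $c_n^{(d)}=\chi(d)c_n$ into the formula of Theorem \ref{main}, and finally upgrade the modularity from $\Gamma_{1}^{\sqrt{1}}(N)$ to $\Gamma_{0}(N)$ with character $\chi^{2}$ using Propositions \ref{LNphichi} and \ref{Gamma0NL1N}.

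For the first step, Proposition \ref{diamSV} lets me write $\langle d\rangle_{+}f=f\big|_{k+1/2,\psi}A$ for any $A\in\widetilde{\Gamma}_{0}(16N)$ whose lower right entry is $\equiv d\pmod{N}$. Since such $A$ in particular lies in $\widetilde{\Gamma}_{0}(4N)$, the character of $f$ yields $f\big|_{k+1/2}A=\psi^{2k+1}(A)\omega_{\chi}(d)f$, so unwinding the $\psi$-twist in the slash operator gives $\langle d\rangle_{+}f=\psi^{2k}(A)\omega_{\chi}(d)f$. From Equation \eqref{rho1Gamma04} one has $\psi^{2}(A)=\overline{\varepsilon_{d}}^{2}=\left(\frac{-1}{d}\right)$ for odd $d$, hence $\psi^{2k}(A)=\left(\frac{-1}{d}\right)^{k}$. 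On the other hand, Equation \eqref{omegachi} together with the parity assumption $\chi(-1)=(-1)^{k}$ gives $\omega_{\chi}(d)=\left(\frac{-1}{d}\right)^{k}\chi(d)$ for odd $d$ coprime to $N$, and the two Kronecker factors cancel to yield $\langle d\rangle_{+}f=\chi(d)f$, so $c_n^{(d)}=\chi(d)c_n$ for every $n$.

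Substituting into the formula of Theorem \ref{main}, the non-constant Fourier coefficients match the claimed expression immediately, while the constant term becomes $-\tfrac{c_0}{2}\sum_{(d,N)=1}\chi(d)\zeta_{N}^{(d)}(1-k)$. Regrouping the partial zeta series by residue classes modulo $N$ collapses this sum into the full Dirichlet $L$-value $L(1-k,\chi)=\sum_{n\geq 1}\chi(n)n^{k-1}$, yielding the stated constant $-L(1-k,\chi)c_0/2$. This produces the Fourier expansion, and Theorem \ref{main} already gives modularity of weight $2k$ with respect to $\Gamma_{1}^{\sqrt{1}}(N)$ (holomorphic when $f$ is, meromorphic with poles of order $k$ at CM points when $f$ is only weakly holomorphic).

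The remaining and main step is to promote modularity from $\Gamma_{1}^{\sqrt{1}}(N)$ to $\Gamma_{0}(N)$ with character $\chi^{2}$. Theorem \ref{rho1lift} places $\mathcal{L}_{+}f$ in $\mathcal{A}_{k+1/2}(\widetilde{\Gamma}_{0}(N),\chi\otimes\rho_{1})$, so by Proposition \ref{LNphichi} the vector-valued form $\mathcal{L}_{N}\mathcal{L}_{+}f$ lies in the $\overline{\chi}$-isotypic subspace $\mathcal{A}_{k+1/2}\bigl(Mp_{2}(\mathbb{Z}),\rho_{1}(N)\bigr)^{\overline{\chi}}$. By Proposition \ref{Gamma0NL1N}, a matrix $\binom{a\ b}{c\ d}\in\Gamma_{0}(N)\subseteq SAut^{+}(L_{1}(N))$ acts trivially on the $D_{1}$ factor of the discriminant group and as $m_{d^{2}}$ on the $D_{B}(N)$ factor; the theta kernel $\Theta_{L_{1}(N)}$ intertwines this action on $\mathbb{C}[D_{L_{1}(N)}]$ with the action on the Grassmannian, so pairing against an $m_{h}$-eigenvector with eigenvalue $\overline{\chi}(h)$ causes the resulting function of $\sigma$ to transform under $\binom{a\ b}{c\ d}$ with the scalar $\chi(d^{2})=\chi^{2}(d)$. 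Since this character factors through $\Gamma_{0}(N)/\Gamma_{1}^{\sqrt{1}}(N)$, combining with the weight-$2k$ modularity under $\Gamma_{1}^{\sqrt{1}}(N)$ from Theorem \ref{main} yields the asserted modularity. The main obstacle is the last paragraph: making precise how the $SAut^{+}(L_{1}(N))$-equivariance of Borcherds' theta kernel converts the $\overline{\chi}$-isotypic structure into the character $\chi^{2}$ on the Shimura lift side.
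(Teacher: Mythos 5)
Your proposal is correct and follows essentially the same route as the paper: substitute $c_n^{(d)}=\chi(d)c_n$ into Theorem \ref{main}, collapse the partial zeta functions into $L(1-k,\chi)$, and upgrade the modularity via Propositions \ref{Gamma0NL1N} and \ref{LNphichi} together with the $SAut^{+}(L_{1}(N))$-equivariance of the theta kernel (which the paper makes precise exactly as you anticipate, by moving the automorphism $m_{d^{2}}$ of the kernel's components over to $m_{d^{-2}}$ on the input form via orthonormality of the canonical basis, yielding the factor $\overline{\chi}(d^{-2})=\chi^{2}(d)$). The only cosmetic difference is that the paper obtains $\langle d\rangle_{+}f=\chi(d)f$ directly from Equation \eqref{diamond} and the fact that $\mathcal{L}_{+}f\in\mathcal{M}_{k+1/2}(\widetilde{\Gamma}_{0}(N),\chi\otimes\rho_{1})$, rather than through your scalar-valued computation with $\psi^{2k}(A)$ and $\omega_{\chi}(d)$, which arrives at the same cancellation.
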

Indeed, the group $\Gamma_{1}^{\sqrt{1}}(N)$ appearing in Theorem \ref{main} is
the intersection of the kernels of all the squares of Dirichlet characters
modulo $N$. We emphasize that the Dirichlet $L$-function appearing in the
constant term is associated with $\chi$ as a character modulo $N$, not with its primitive version.

\begin{proof}
To prove that $\mathcal{S}_{1}^{(N)}(f)\in\mathcal{M}_{2k}\big(\Gamma_{0}(N),\chi^{2}\big)$, recall that $\mathcal{S}_{1}^{(N)}(f)$ is given by the (regularized) pairing of $\mathcal{L}_{N}\mathcal{L}_{+}f/N$ with a certain vector-valued theta function associated to $L_{1}(N)$. The modularity of $\mathcal{S}_{1}^{(N)}(f)$ follows from the invariance of this theta function under the discriminant kernel. A more general element of $SAut^{+}\big(L_{1}(N)\big)$ operates on this theta function through permuting its components according to the appropriate automorphism of $D_{1}(N)$. The orthonormality of the canonical basis of $\mathbb{C}\big[D_{1}(N)\big]$ shows that the resulting function coincides with the theta lift of the element of $\mathcal{M}_{k+1/2}\big(Mp_{2}(\mathbb{Z}),\rho_{1}(N)\big)$ obtained from $\mathcal{L}_{N}\mathcal{L}_{+}f/N$ by permuting the components according to the inverse automorphism of $D_{1}(N)$.

Now, consider the action of $\big(\begin{smallmatrix} a & b \\ c & d\end{smallmatrix}\big)\in\Gamma_{0}(N)$. Proposition \ref{Gamma0NL1N} shows that its operation on $D_{1}(N)$, hence on the theta function, is via the automorphism associated with $d^{2}\in(\mathbb{Z}/N\mathbb{Z})^{\times}$. Hence we must consider $\mathcal{L}_{N}\mathcal{L}_{+}f/N$ twisted by the automorphism $d^{-2}$. But $\mathcal{L}_{+}f$ lies in $\mathcal{M}_{k+1/2}\big(\Gamma_{0}(N),\chi\otimes\rho_{1}\big)$ by Theorem \ref{rho1lift}, and it follows from Proposition \ref{LNphichi} that the automorphism $d^{-2}$ of $D_{1}(N)$ multiplies $\mathcal{L}_{N}\mathcal{L}_{+}f$ by $\overline{\chi}(d^{-2})=\chi^{2}(d)$. This proves the modularity with respect to $\Gamma_{0}(N)$. For the Fourier expansion, Equation \eqref{diamond} and the fact that $\mathcal{L}_{+}f\in\mathcal{M}_{k+1/2}\big(\Gamma_{0}(N),\chi\otimes\rho_{1}\big)$ show that $\langle d \rangle_{+}f=\chi(d)f$. Hence every superscript $(d)$ can be replaced by the multiplier $\chi(d)$. Theorem \ref{main} and the relations between Dirichlet $L$-functions and partial zeta functions now complete the proof of the proposition.
\end{proof}

\begin{rmk}
Assuming that $\chi$ is quadratic in Proposition \ref{mainchar}, one may consider the action of the group $\widetilde{\Gamma}_{0}^{*}(4N)$, including the Atkin--Lehner involutions (of half-integral weight), on our lifted modular form $f$. Remark \ref{SAutL1N} states that the Atkin--Lehner involutions operate on $\mathcal{S}_{1}^{(N)}f$ also via automorphisms of the components of the theta kernel. It would be interesting to see whether Atkin--Lehner eigenfunctions are mapped via $\mathcal{S}_{1}^{(N)}f$ to Atkin--Lehner eigenfunctions. However, as our proof uses Weil representations of $Mp_{2}(\mathbb{Z})$, which have no natural extension to $\Gamma_{0}^{*}(N)$, it seems that investigating this question requires tools additional to those developed in this paper. \label{ALprob}
\end{rmk}

\section{Higher Indices and Levels \label{HiLev}}

The lift $\mathcal{S}_{1}^{(N)}$ from Theorem \ref{main} and Proposition
\ref{mainchar} involves only Fourier coefficients of square indices. For the coefficients in the other square classes, we shall use the properties of functions obtained by rescaling the variable of modular forms by an integer. The modularity properties of these functions are simple and well-known in the integral weight case. On the other hand, things are a little more delicate in the half-integral case.

For $t\in\mathbb{N}$ we denote the character $d\mapsto\big(\frac{t}{d}\big)$ by $\chi_{t}$, and let $t_{2}$ be the largest positive odd divisor of $t$ as well as $v_{2}(t)$ the 2-adic valuation of $t$. Note that $\chi_{t}$ is always an even character, since it decomposes as a power of the even character $\chi_{2}$ times $\chi_{t_{2}}$, and we have $\chi_{t_{2}}(-1)=(-1)^{\varepsilon(t_{2})}\big(\frac{-1}{t_{2}}\big)=1$ by the quadratic reciprocity law. Here and below, $\varepsilon(h)$ stands for the image of $\frac{h-1}{2}$ in $\mathbb{Z}/2\mathbb{Z}$ for any odd number $h$. Note that as in \cite{[Str]} and others, in case $t$ is odd we allow $d$ to be even, by defining $\big(\frac{t}{2}\big)$ to be the same as $\big(\frac{2}{t}\big)$. Given such $t$ and a function $f:\mathcal{H}\to\mathbb{C}$, we define
\begin{equation}
f_{t}(\tau):=f(t\tau)=t^{-\kappa/2}\big(f\big|_{\kappa}C_{t}\big)(\tau),\quad\mathrm{where}\quad
C_{t}:=\Bigg(\begin{pmatrix} \sqrt{t} & 0 \\ 0 & 1/\sqrt{t}\end{pmatrix},\frac{1}{\sqrt[4]{t}}\Bigg) \in Mp_{2}(\mathbb{R})\quad\mathrm{and}\quad \kappa\in\tfrac{1}{2}\mathbb{Z}. \label{ftCtdef}
\end{equation}
Here we start with the following lemma.
\begin{lem}
Suppose that $f\in\mathcal{A}_{k+1/2}(\widetilde{\Gamma}_{N},\psi^{2k+\xi})$ for some $\xi\in\{\pm1\}$, and $t\in\mathbb{N}$. Then $f_{t}$ lies in the space $\mathcal{A}_{k+1/2}\big(\widetilde{\Gamma}_{0}(4Nt)\cap\widetilde{\Gamma}_{N},\psi^{2k+\xi}\chi_{t}\big)$. This implies that $f_{t} \in \mathcal{A}_{k+1/2}(\widetilde{\Gamma}_{Nt},\psi^{2k+\upsilon})$ for some $\upsilon\in\{\pm1\}$ if either $4|N$, $4|t$, or $t$ is odd and
$\big(\frac{-1}{t}\big)=\xi\upsilon$, but not otherwise. In case $f\in\mathcal{A}_{k+1/2}\big(\widetilde{\Gamma}_{0}(4N),\psi^{2k+1}\omega_{\chi}\big)$ for some character $\chi$ modulo $N$ with $\chi(-1)=\xi$ and at least one of the conditions above is satisfied, we get that $f_{t}$ lies in $\mathcal{A}_{k+1/2}\big(\widetilde{\Gamma}_{0}(4Nt),\psi^{2k+1}\omega_{\eta}\big)$, where $\eta(d):=\chi(d)\big(\frac{\xi\upsilon t}{d}\big)$ defines a character modulo $Nt$ with $\eta(-1)=\upsilon$. \label{relfft}
\end{lem}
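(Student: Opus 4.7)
The plan is to reduce all three assertions to a single slash-operator identity. From Equation \eqref{ftCtdef}, for any $A \in Mp_{2}(\mathbb{R})$ a direct computation gives $f_{t}\big|_{\kappa}A = t^{-\kappa/2}\bigl(f\big|_{\kappa}(C_{t}AC_{t}^{-1})\bigr)\big|_{\kappa}C_{t}$, so whenever $f\big|_{\kappa}(C_{t}AC_{t}^{-1}) = \nu\cdot f$ for a character value $\nu$, one has $f_{t}\big|_{\kappa}A = \nu\cdot f_{t}$. Hence the transformation law of $f_{t}$ under $A$ is determined by that of $f$ under the conjugate $C_{t}AC_{t}^{-1}$, and the bulk of the work is to compute this conjugate as a metaplectic element and read off the resulting character.

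A direct matrix calculation gives $C_{t}\alpha C_{t}^{-1} = \bigl(\begin{smallmatrix} a & bt \\ c/t & d \end{smallmatrix}\bigr)$, which is integral exactly when $t\mid c$; under the assumption $A\in\widetilde{\Gamma}_{0}(4Nt)\cap\widetilde{\Gamma}_{N}$ the entry $c/t$ is divisible by $4N$ and the $\Gamma_{1}(N)$-congruences on $a,d$ survive, so the conjugate lies in $\widetilde{\Gamma}_{N}$. The metaplectic datum $\phi_{A}(\tau)$ transforms to $\phi_{A}(\tau/t)$, which keeps the same sign as $\phi_{A}$ relative to the canonical square root of $j(C_{t}\alpha C_{t}^{-1},\tau)$. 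Substituting into \eqref{rho1Gamma04} and using multiplicativity of the Kronecker symbol in the numerator yields
\[\psi(C_{t}AC_{t}^{-1}) = \pm\biggl(\frac{c/t}{d}\biggr)\overline{\varepsilon_{d}} = \psi(A)\biggl(\frac{t}{d}\biggr) = \psi(A)\chi_{t}(d).\]
Since $2k+\xi$ is odd and $\chi_{t}^{2}=1$, raising to the $(2k+\xi)$th power proves the first assertion.

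For the second assertion, since $\widetilde{\Gamma}_{Nt}\subseteq\widetilde{\Gamma}_{0}(4Nt)\cap\widetilde{\Gamma}_{N}$, the character on $\widetilde{\Gamma}_{Nt}$ is $\psi^{2k+\xi}\chi_{t}$; using $\psi^{2}(A)=\bigl(\frac{-1}{d}\bigr)$, equality with $\psi^{2k+\upsilon}$ reduces to asking whether $\chi_{t}$ is a power of $\chi_{-1}$ on residues $d\equiv 1\pmod{Nt}$. I will factor $\chi_{t}=\chi_{2}^{v_{2}(t)}\chi_{t_{2}}$; quadratic reciprocity collapses $\chi_{t_{2}}$ to $\chi_{-1}^{\varepsilon(t_{2})}$ on such $d$, while $\chi_{2}(d)$ depends only on $d\bmod 8$. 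A case analysis on $v_{2}(Nt)$ then yields: if $4\mid N$ or $4\mid t$, both contributions collapse to be trivial and $\upsilon=\xi$ works; if $t$ is odd, the character equals $\chi_{-1}^{\varepsilon(t)}$, forcing $\upsilon=\xi\bigl(\frac{-1}{t}\bigr)$; and if $t\equiv 2\pmod 4$ with $4\nmid N$, exhibiting explicit matrices in $\Gamma_{Nt}$ with $d$ in distinct residues modulo $8$ shows $\chi_{2}\big|_{\Gamma_{Nt}}$ takes both signs, ruling out any $\upsilon$.

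For the third assertion, the stronger hypothesis that $f$ is modular on the larger group $\widetilde{\Gamma}_{0}(4N)$ permits the first-step computation for any $A\in\widetilde{\Gamma}_{0}(4Nt)$, since $C_{t}AC_{t}^{-1}$ still lies in $\widetilde{\Gamma}_{0}(4N)$. Combined with $\omega_{\chi}(C_{t}AC_{t}^{-1})=\omega_{\chi}(d)$ this gives $f_{t}\big|A = \psi^{2k+1}(A)\omega_{\chi}(d)\chi_{t}(d)\,f_{t}$. It remains a short Kronecker-symbol verification, using $\omega_{\eta}(d)=\bigl(\frac{4\upsilon}{d}\bigr)\eta(d)$ and $\eta(d)=\chi(d)\bigl(\frac{\xi\upsilon t}{d}\bigr)$, that $\omega_{\eta}(d)=\omega_{\chi}(d)\chi_{t}(d)$; the parity $\eta(-1)=\chi(-1)\cdot\xi\upsilon=\upsilon$ follows from $\bigl(\frac{n}{-1}\bigr)=\operatorname{sign}(n)$. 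The hardest part will be the case analysis in the second assertion: the $\chi_{t_{2}}$ contribution is uniformly controlled by reciprocity once $d\equiv 1\pmod{4t_{2}}$, so the real subtlety lies in tracking when the congruence $d\equiv 1\pmod{Nt}$ forces $d\equiv 1\pmod 8$, which is exactly the divide between the three ``yes'' cases and the failure case.
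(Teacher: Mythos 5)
Your proposal is correct and follows essentially the same route as the paper's proof: conjugation by $C_{t}$ to produce the twist of the character by $\chi_{t}$, a conductor and quadratic-reciprocity case analysis on $\widetilde{\Gamma}_{Nt}$ (splitting $\chi_{t}=\chi_{2}^{v_{2}(t)}\chi_{t_{2}}$ and using $\psi^{2}(A)=\big(\tfrac{-1}{d}\big)$, with the failure case $t\equiv2\ (\mathrm{mod\ }4)$, $4\nmid N$ detected at $d\equiv5\ (\mathrm{mod\ }8)$) for the second assertion, and the Kronecker-symbol identity $\omega_{\eta}(d)=\omega_{\chi}(d)\chi_{t}(d)$ for the third. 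The one point the paper checks that you leave implicit is that $d\mapsto\big(\tfrac{\xi\upsilon t}{d}\big)$, hence $\eta$, is genuinely defined modulo $Nt$ under the stated hypotheses (its conductor divides $t$, $8t_{2}$, or $4t_{2}$ in the respective cases); this is the same conductor bookkeeping you already perform for the second assertion, so it is a small omission rather than a gap.
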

It is clear that if $f$ is (weakly) holomorphic then so is $f_{t}$, so that we
may replace every $\mathcal{A}$ by $\mathcal{M}$ or by $\mathcal{M}^{!}$ in Lemma \ref{relfft} and still get a valid assertion. The same statement extends to harmonic weak Maass forms, with restrictions on their $\xi$-images, as in Remark \ref{Maass}.

\begin{proof}
If $A\in\widetilde{\Gamma}_{0}(4Nt)$ lies over a matrix $\big(\begin{smallmatrix} a & b \\ c & d\end{smallmatrix}\big)\in\Gamma_{0}(4Nt)$ then $A_{t}:=C_{t}A_{t}C_{t}^{-1}$ belongs to $\widetilde{\Gamma}_{0}(4N)$ and lies over $\big(\begin{smallmatrix} a & bt \\ c/t & d\end{smallmatrix}\big)$. It follows from Equation \eqref{ftCtdef} and the definition of $A_{t}$ that $(f_{t}\big|_{k+1/2}A)(\tau)=\big(f\big|_{k+1/2}A_{t}\big)(t\tau)$. In case $A$ lies also in $\widetilde{\Gamma}_{N}$, so is $A_{t}$, and $f\big|_{k+1/2}A_{t}$ equals $\psi^{2k+\xi}(A_{t})f$. Comparing the expressions for $\psi(A)$ and $\psi(A_{t})$ appearing in Equation \eqref{rho1Gamma04}, we find that $d$ and the metaplectic sign remain the same, but $c$ is replaced by $\frac{c}{t}$. It follows that  $\psi^{2k+\xi}(A_{t})$ can be written as $\psi^{2k+\xi}(A)\chi_{t}(d)$. This proves the first assertion.

In particular, $f_{t}$ lies in $\mathcal{A}_{k+1/2}(\widetilde{\Gamma}_{Nt},\psi^{2k+\xi}\chi_{t})$.
We thus have to find under which condition the characters $\psi^{2k+\xi}\chi_{t}$ and $\psi^{2k+\upsilon}$ coincide on $\widetilde{\Gamma}_{Nt}$. Now, for $A\in\widetilde{\Gamma}_{Nt}$ as above we know that $d$ is odd and satisfies $d\equiv1(\mathrm{mod\ }Nt)$. The character $\chi_{t}$ decomposes as $\chi_{2}^{v_{2}(t)}\chi_{t_{2}}$ where the first multiplier has conductor 8 if $v_{2}(t)$ is odd, and the conductor of the second one divides $4t_{2}$. Hence the total conductor divides $8t_{2}$. Now, if $4|N$ then the two (odd) powers of $\psi$ coincide on $\widetilde{\Gamma}_{N}$ hence also on $\widetilde{\Gamma}_{Nt}$. In addition, we have $4t_{2}|Nt$, and even $8t_{2}|Nt$ if $t$ is even. The conductor of $\chi_{t}$ thus divides $Nt$ in any such case, proving the assertion if $4|N$. If $4|t$ then the conductor of $\chi_{t}$ divides $t$. To see this, observe that if $\frac{t}{4}$ is odd then the conductor divides $4t_{2}|t$, while otherwise it divides $8t_{2}|t$. As the two powers of $\psi$ coincide on $\widetilde{\Gamma}_{Nt}$ also in this case, the assertion follows here as well.

We now assume that $t$ is odd, and that $4 \nmid N$. We then apply the quadratic reciprocity law to find that $\chi_{t}(d)$ equals $(-1)^{\varepsilon(d)\varepsilon(t)}\big(\frac{d}{t}\big)$, where the latter multiplier equals 1 since $d\equiv1(\mathrm{mod\ }Nt)$. Now, $(-1)^{\varepsilon(d)}$ is the same as $\psi^{2}(A)$, and $(-1)^{\varepsilon(t)}$ equals $\big(\frac{-1}{t}\big)$. As both $\xi$ and $\upsilon$ are in $\{\pm1\}$, and Proposition \ref{char4notdivN} shows that the spaces $\mathcal{A}_{k+1/2}(\widetilde{\Gamma}_{Nt},\psi^{2k\pm1})$ are distinct (since $4 \nmid N$), this establishes the assertion by checking the two cases of $\varepsilon(t)$. It remains to consider the case where $t\equiv2(\mathrm{mod\ }4)$  and $4 \nmid N$. But the conductor of $\chi_{t}$ is divisible by 8 in this case, while $Nt$ is not. Hence if $A\in\widetilde{\Gamma}_{Nt}$ is as above and such that $d\equiv5(\mathrm{mod\ }8)$ and $\psi(A)=1$ then the fact that $\chi_{t}(d)=-1$ shows that $f_{t}$ cannot be in any of the spaces $\mathcal{A}_{k+1/2}(\widetilde{\Gamma}_{Nt},\psi^{2k+\upsilon})$ with $\upsilon\in\{\pm1\}$. The second assertion is thus also proved.

As for the characters, if $t$ is odd and $\big(\frac{-1}{t}\big)=\xi\upsilon$ then $d\mapsto\big(\frac{\xi\upsilon t}{d}\big)$ is defined modulo $t$. Otherwise it is defined modulo $8t_{2}$, and even $4t_{2}$ when $t=4t_{2}$, proving that $\eta$ is defined modulo $Nt$ under the conditions before. As $\chi_{t}$ is even, we find that $\eta(-1)$ is the product of $\chi(-1)=\xi$ and the image $\xi\upsilon$ of $-1$ under $d\mapsto\big(\frac{\xi\upsilon}{d}\big)$, which is indeed $\upsilon$. Now, if $f\in\mathcal{A}_{k+1/2}\big(\widetilde{\Gamma}_{0}(4N),\psi^{2k+1}\omega_{\chi}\big)$ then $f_{t}\big|_{k+1/2}A(\tau)=f\big|_{k+1/2}A_{t}(t\tau)$ (with $A\in\widetilde{\Gamma}_{0}(Nt)$ and $A_{t}\in\widetilde{\Gamma}_{0}(N)$ as above) equals $\psi^{2k+1}(A_{t})\omega_{\chi}(d)f_{t}(\tau)$, and we have seen that the coefficient here can be written as $\psi^{2k+1}(A)$ times the expression $\chi_{t}(d)\big(\frac{\xi}{d}\big)\chi(d)$ (where we have also used Equation \eqref{omegachi} and the condition $\chi(-1)=\xi$ for the definition of $\omega_{\chi}$). But the latter expression is just $\big(\frac{\upsilon}{d}\big)\eta(d)$ by the definition of $\eta$, and this is $\omega_{\eta}(d)$ since $\eta(-1)=\upsilon$. This proves that $f_{t}\big|_{k+1/2}A=\psi^{2k+1}(A)\omega_{\eta}(d)f_{t}$ for every such $A$, proving the third assertion as well.

This completes the proof of the lemma.
\end{proof}

We will now prove the following result.
\begin{thm}
Let $t$ be a positive square-free integer, and take $f\in\mathcal{M}_{k+1/2}(\widetilde{\Gamma}_{N}^{\psi})$. Assume that either $t$ is odd and $f\in\mathcal{M}_{k+1/2}^{+}(\widetilde{\Gamma}_{N},\psi^{\varepsilon})$ where
$\varepsilon=\big(\frac{-1}{t}\big)$, or that $t$ is even, $4|N$, $f$ contains only Fourier coefficients of even indices, and $\varepsilon\in\{\pm1\}$ is arbitrary. Then the function of $\sigma\in\mathcal{H}$ defined by \[\mathcal{S}_{t}^{(N)}f(\sigma):=-\sum_{\substack{d=1 \\ (d,N)=1}}^{N}\Bigg[\sum_{\substack{m=0 \\ (d+Nm,t)=1}}^{t-1}\bigg(\frac{\varepsilon t}{d+Nm}\bigg)\frac{\zeta_{Nt}^{(d+Nm)}(1-k)}{2}\Bigg]c_{0}^{(d)}+\sum_{l=1}^{\infty}\Bigg[\sum_{\substack{d|l \\ (d,Nt)=1}}d^{k-1}\bigg(\frac{\varepsilon t}{d}\bigg)c_{\frac{tl^{2}}{d^{2}}}^{(d)}\Bigg]\mathbf{e}(l\sigma)\] belongs to
$\mathcal{M}_{2k}\big(\Gamma_{1}^{\sqrt{1}}(N)\big)$. In case $f\in\mathcal{M}_{k+1/2}^{+}\big(\widetilde{\Gamma}_{0}(4N),\psi^{2k+1}\omega_{\chi}\big)$ for a Dirichlet character $\chi$ modulo $N$ satisfying
$\chi(-1)=\varepsilon(-1)^{k}$, we have
\[\mathcal{S}_{t}^{(N)}f(\sigma)=-\frac{L(1-k,\eta)}{2}c_{0}+\sum_{l=1}^{\infty}\Bigg[\sum_{\substack{d|l \\ (d,Nt)=1}}d^{k-1}\eta(d)c_{\frac{tl^{2}}{d^{2}}}\Bigg]\mathbf{e}(l\sigma),\]
with $\eta$ the character modulo $Nt$ which is defined by  $\eta(d):=\chi(d)\big(\frac{\varepsilon t}{d}\big)$ and satisfying $\eta(-1)=(-1)^{k}$. The modular form $\mathcal{S}_{t}^{(N)}f$ then lies in $\mathcal{M}_{2k}\big(\Gamma_{0}(N),\chi^{2}\big)$. In case the modular form $f$ is in $\mathcal{M}_{k+1/2}^{!}(\widetilde{\Gamma}_{N},\psi)$ or in
$\mathcal{M}_{k+1/2}^{!}\big(\widetilde{\Gamma}_{0}(4N),\psi^{2k+1}\omega_{\chi}\big)$, the same expansions produce meromorphic modular forms with poles of order $k$
at CM points. \label{Stf}
\end{thm}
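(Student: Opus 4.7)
The plan is to deduce the general index-$t$ statement from the index-$1$ case in Theorem \ref{main} and Proposition \ref{mainchar} by applying the latter, at the auxiliary level $Nt$, to the rescaled form $f_{t}(\tau):=f(t\tau)$. By Lemma \ref{relfft} the function $f_{t}$ lies in $\mathcal{M}_{k+1/2}(\widetilde{\Gamma}_{Nt},\psi^{2k+\upsilon})$ for an appropriate $\upsilon\in\{\pm 1\}$, and in the character case in $\mathcal{M}_{k+1/2}(\widetilde{\Gamma}_{0}(4Nt),\psi^{2k+1}\omega_{\eta})$ with $\eta(d)=\chi(d)\big(\frac{\varepsilon t}{d}\big)$. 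A direct congruence check modulo $4$ shows that the hypothesis $\varepsilon=\big(\frac{-1}{t}\big)$ in the odd-$t$ case, respectively the assumption that all nonzero Fourier indices of $f$ are even in the even-$t$ case (with $4\mid N$), is exactly what forces the nonzero Fourier indices $n$ of $f_{t}$ to satisfy $(-1)^{k}\upsilon n\equiv 0,1\pmod{4}$ with $\upsilon=(-1)^{k}$. In other words, $f_{t}$ lies in the plus-space $\mathcal{M}^{+}_{k+1/2}(\widetilde{\Gamma}_{Nt},\psi^{2k+(-1)^{k}})$, the domain of $\mathcal{S}_{1}^{(Nt)}$. Theorem \ref{main} and Proposition \ref{mainchar} then yield $G:=\mathcal{S}_{1}^{(Nt)}(f_{t})\in\mathcal{M}_{2k}\big(\Gamma_{1}^{\sqrt{1}}(Nt)\big)$ (respectively in $\mathcal{M}_{2k}(\Gamma_{0}(Nt),\eta^{2})$), meromorphic with poles of order $k$ at CM points when $f$ is weakly holomorphic.

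The next step is to unpack the Fourier expansion of $G$ and identify it with $(\mathcal{S}_{t}^{(N)}f)(t\tau)$. By definition of $\mathcal{S}_{1}^{(Nt)}$, the $l$-th Fourier coefficient of $G$ is $\sum_{d\mid l,(d,Nt)=1}d^{k-1}$ times the $l^{2}/d^{2}$-th Fourier coefficient of $\langle d\rangle_{\upsilon}f_{t}$. Since $f_{t}$ is supported on indices divisible by $t$, this vanishes unless $t\mid l^{2}/d^{2}$; coprimality of $d$ with $t$ together with square-freeness of $t$ forces $t\mid l$. Writing $l=tm$, the compatibility of diamond operators at levels $Nt$ and $N$ (Proposition \ref{diamSV}, combined with the character computation of Lemma \ref{relfft}) replaces $\langle d\rangle_{\upsilon}f_{t}$ by $\big(\frac{\varepsilon t}{d}\big)\big(\langle d\rangle_{\xi}f\big)_{t}$, whose $l^{2}/d^{2}=t^{2}m^{2}/d^{2}$-th coefficient equals $c_{tm^{2}/d^{2}}^{(d)}$. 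Hence the $tm$-th coefficient of $G$ is $\sum_{d\mid m,(d,Nt)=1}d^{k-1}\big(\frac{\varepsilon t}{d}\big)c_{tm^{2}/d^{2}}^{(d)}$, matching the $m$-th coefficient of $\mathcal{S}_{t}^{(N)}f$. The constant term is handled identically, reparametrizing $d'\in(\mathbb{Z}/Nt\mathbb{Z})^{\times}$ as $d+Nm$ with $d\in(\mathbb{Z}/N\mathbb{Z})^{\times}$ and $m\in\{0,\ldots,t-1\}$, which also produces the twisting factor $\big(\frac{\varepsilon t}{d+Nm}\big)$ and collapses to $-L(1-k,\eta)c_{0}/2$ in the character case. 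Thus $G(\tau)=(\mathcal{S}_{t}^{(N)}f)(t\tau)$ as $q$-expansions.

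The main obstacle is the final level reduction: modularity of $G=(\mathcal{S}_{t}^{(N)}f)_{t}$ on $\Gamma_{1}^{\sqrt{1}}(Nt)$ only yields modularity of $\mathcal{S}_{t}^{(N)}f$ on the conjugate subgroup $C_{t}\Gamma_{1}^{\sqrt{1}}(Nt)C_{t}^{-1}=\Gamma^{0}(t)\cap\Gamma_{1}^{\sqrt{1}}(N)$, which is a proper subgroup of $\Gamma_{1}^{\sqrt{1}}(N)$. To upgrade to full $\Gamma_{1}^{\sqrt{1}}(N)$-modularity (respectively $\Gamma_{0}(N)$-modularity with character $\chi^{2}$), I would re-run the Borcherds lift argument of Theorem \ref{main} with a rescaled lattice $L_{t}(N)$ in which the $L_{1}$-factor generated by $H$ is replaced by a generator of norm $2t$. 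The rescaling does not affect the action of $\Gamma_{0}(N)$ on the hyperbolic plane $\mathbb{Z}E\oplus\mathbb{Z}NF$, so Proposition \ref{Gamma0NL1N} and Remark \ref{disckerL1N} still place $\Gamma_{1}^{\sqrt{1}}(N)$ inside the discriminant kernel of $L_{t}(N)$. On the other hand, the theta kernel for $L_{t}(N)$ pairs the vector-valued input $\mathcal{L}_{N}\mathcal{L}_{+}f$ against Fourier indices of the form $\lambda^{2}/2=tm^{2}/4$, producing exactly the coefficients $c_{tm^{2}/d^{2}}^{(d)}$ appearing in $\mathcal{S}_{t}^{(N)}f$, with the Legendre symbol $\big(\frac{\varepsilon t}{d}\big)$ emerging from the analogue of Proposition \ref{LNphichi} for the rescaled discriminant group. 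The meromorphic extension and the weak Maass form extension then follow verbatim from the corresponding parts of Theorem \ref{Bor143}, exactly as in Theorem \ref{main} and Remark \ref{Maass}.
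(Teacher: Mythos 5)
Your first two paragraphs reproduce the paper's argument (which follows Section 3 of \cite{[N]}): pass to $f_{t}$, check via Lemma \ref{relfft} and the congruence condition on indices that $f_{t}$ lands in the plus-space at level $Nt$, apply Theorem \ref{main} and Proposition \ref{mainchar} there, and use Proposition \ref{diamSV} together with square-freeness of $t$ to identify $\mathcal{S}_{1}^{(Nt)}f_{t}$ with $t^{-k}(\mathcal{S}_{t}^{(N)}f)\big|C_{t}$. That part is sound. You also correctly isolate the remaining obstacle: this only gives modularity of $\mathcal{S}_{t}^{(N)}f$ on $\Gamma_{1}^{\sqrt{1}}(N)\cap\Gamma^{0}(t)$.

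Your proposed fix for the level reduction, however, does not work. First, you cannot rescale only the rank-one summand $\mathbb{Z}H$ of $L_{1}(N)$ and retain the conjugation action of $\Gamma_{0}(N)$: Equation \eqref{SL2RM2R0} shows that conjugating $H$ produces components along $E$ and $F$, so the $SL_{2}$-action does not respect a rescaling of one orthogonal summand alone. Rescaling the entire quadratic form by $t$ does preserve the action, but then the discriminant group of the hyperbolic part grows (its dual acquires $\frac{1}{Nt}E$ and $\frac{1}{t}F$), and the discriminant kernel shrinks to a group of level roughly $Nt$ rather than $N$ --- exactly the level you are trying to get rid of. Second, even granting a suitable lattice $L_{t}(N)$, its discriminant group has order $2t\cdot N^{2}t^{2}$ (or at least $2t\cdot N^{2}$), while $\mathcal{L}_{N}\mathcal{L}_{+}f$ takes values in $\mathbb{C}[D_{1}(N)]$ of dimension $2N^{2}$; the pairing with the theta kernel of $L_{t}(N)$ is therefore undefined, and producing an input of the right representation would require a new, level- and character-dependent vector-valued lifting --- essentially Niwa's construction, which the paper is structured to avoid. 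The paper closes the gap differently and more cheaply: by the first assertion of Lemma \ref{relfft}, $f_{t}$ is modular on the \emph{larger} group $\widetilde{\Gamma}_{N}\cap\widetilde{\Gamma}_{0}(4Nt)$ with character $\psi(\cdot)\big(\frac{\varepsilon t}{\cdot}\big)$, so the argument of Proposition \ref{LNphichi} shows that the automorphisms $m_{h}$ of $D_{B}(Nt)$ with $h\equiv1\ (\mathrm{mod\ }N)$ act on $\mathcal{L}_{Nt}\mathcal{L}_{+}f_{t}$ through $\big(\frac{\varepsilon t}{\cdot}\big)$. Feeding this into the proof of Proposition \ref{mainchar} (via Proposition \ref{Gamma0NL1N}) upgrades the modularity of $\mathcal{S}_{1}^{(Nt)}f_{t}$ to $\Gamma_{1}^{\sqrt{1}}(N)\cap\Gamma_{0}(Nt)$; conjugating by $C_{t}^{-1}$ and adjoining the $T$-invariance coming from the integral Fourier expansion then generates all of $\Gamma_{1}^{\sqrt{1}}(N)$. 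You need this extra-symmetry step (or an equivalent one) to complete the proof.
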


\begin{rmk}
The simple relation
$\zeta_{N}^{(d)}=\sum_{m=1}^{t}\zeta_{Nt}^{(d+Nm)}$ and some conductor
considerations show that the coefficient of $c_{0}^{(d)}$ in Theorem \ref{Stf}
reduces to just $\big(\frac{\varepsilon t}{d}\big)\frac{\zeta_{N}^{(d)}(1-k)}{2}$, except in the case with $2|t$ and $N\equiv4(\mathrm{mod\ }8)$, where this coefficient equals $\big(\frac{\varepsilon t}{d}\big)\frac{\zeta_{2N}^{(d)}(1-k)-\zeta_{2N}^{(d+N)}(1-k)}{2}$. \label{parzetas}
\end{rmk}

\begin{proof}
The proof follows Section 3 of \cite{[N]}. Lemma \ref{relfft} and
a simple investigation of the Fourier coefficients show that the cases we
consider are the precisely the ones in which $f_{t}\in\mathcal{M}_{k+1/2}^{+}(\widetilde{\Gamma}_{Nt},\psi)$. By Theorem \ref{main},
\[\mathcal{S}_{1}^{(Nt)}f_{t}(\sigma)=-\sum_{\substack{h=1 \\ (h,Nt)=1}}^{Nt}\frac{\zeta_{Nt}^{(h)}(1-k)\tilde{c}_{0}^{(h)}}{2}+\sum_{l=1}^{\infty}\Bigg[\sum_{\substack{
d|l \\ (d,Nt)=1}}d^{k-1}\tilde{c}_{l^{2}/d^{2}}^{(d)}\Bigg]\mathbf{e}(l\sigma)\] yields an element of $\mathcal{M}_{2k}\big(\Gamma_{1}^{\sqrt{1}}(Nt)\big)$, where $\tilde{c}_{n}^{(d)}$ is the $n$\textsuperscript{th} Fourier coefficient of $\langle d \rangle_{+}f_{t}$. But Proposition \ref{diamSV} shows that we can write this function as $f_{t}\big|_{k+1/2,\psi}A$ for $A\in\widetilde{\Gamma}_{0}(16Nt)$ with lower right entry in $d+Nt\mathbb{Z}$. The proof of Lemma \ref{relfft} now shows that this function attains on $\tau\in\mathcal{H}$ the value $\overline{\psi}(A)f\big|_{k+1/2}A_{t}(t\tau)$ with $A_{t}\in\widetilde{\Gamma}_{0}(16N)$ having the same lower right entry as $A$, and that this coincide with $\big(\frac{t}{d}\big)f\big|_{k+1/2,\psi}A_{t}(t\tau)$. Replacing $\psi$ by $\psi^{\varepsilon}$ in the slash operator takes out a coefficient of $\big(\frac{\varepsilon}{t}\big)$. Using Proposition \ref{diamSV} once more, this slash operator is now just $\langle d \rangle_{\varepsilon}$ (or maybe its extended version), and since the variable is $t\tau$ we find that $\tilde{c}_{n}^{(d)}$ equals $\big(\frac{\varepsilon t}{d}\big)c_{n/t}^{(d)}$ in case $t|n$ and 0 otherwise. As $t$ is square-free and $(d,Nt)=1$, we find that
$t\big|\frac{l^{2}}{d^{2}}$ if and only if $t|l$. We thus replace $l$ by $tl$ in the expression for $\mathcal{S}_{1}^{(Nt)}f_{t}(\sigma)$, a function of $\sigma$ which transforms with respect to $\Gamma_{1}^{\sqrt{1}}(Nt)$, and see that this expression equals $\mathcal{S}_{t}^{(N)}f(t\sigma)=t^{-k}(\mathcal{S}_{t}^{(N)}f)\big|C_{t}(\sigma)$ (after the summation index change $h=d+Nm$). Here we have used the weight $2k$ slash operator associated with the matrix lying under the metaplectic element $C_{t}$ from Equation \eqref{ftCtdef}, for which we allow ourselves the slight abuse of notation and denote $C_{t}$ as well.

Now, since $f_{t}$ is modular with respect to $\widetilde{\Gamma}_{N}\cap\widetilde{\Gamma}_{0}(4Nt)$ and character $\psi( \cdot) \big(\frac{\varepsilon t}{\cdot}\big)$, the proof of Proposition \ref{LNphichi} shows that elements of $(\mathbb{Z}/Nt\mathbb{Z})^{\times}$ which are congruent to 1 modulo $N$ operate on $\mathcal{L}_{Nt}\mathcal{L}_{+}f_{t}$ through the character $\big(\frac{\varepsilon t}{\cdot}\big)$. The proof of Proposition \ref{mainchar} thus yields the modularity of $\mathcal{S}_{1}^{(Nt)}f_{t}$ with respect to $\Gamma^{\sqrt{1}}_{1}(N) \cap \Gamma_{0}(Nt)$. It follows that $\mathcal{S}_{t}^{(N)}f=t^{k}\mathcal{S}_{1}^{(Nt)}f_{t}\big|C_{t}^{-1}$ is modular with respect to the $C_{t}^{-1}$-conjugate  $\Gamma^{\sqrt{1}}_{1}(N)\cap\Gamma^{0}(t)$ of $\Gamma^{\sqrt{1}}_{1}(N) \cap \Gamma_{0}(Nt)$, and since $\mathcal{S}_{t}^{(N)}f$ is invariant under $T$ as well, the first assertion follows.

If $f\in\mathcal{M}_{k+1/2}\big(\widetilde{\Gamma}_{0}(4N),\psi^{2k+1}\omega_{\chi}\big)$ for such $\chi$, then $f_{t}\in\mathcal{M}_{k+1/2}\big(\widetilde{\Gamma}_{0}(4Nt),\psi^{2k+1}\omega_{\eta}\big)$ with the prescribed character $\eta$ by Lemma \ref{relfft} with $\xi=(-1)^{k}\varepsilon$ and $\upsilon=(-1)^{k}$. The second assertion thus follows by a similar argument, using Proposition \ref{mainchar}. Weakly holomorphic modular forms produce via $\mathcal{S}_{t}^{(N)}$ meromorphic modular forms as in Theorem \ref{main} and Proposition \ref{mainchar}. This completes the proof of the theorem.
\end{proof}

The assertion about the cuspidality of $\mathcal{S}_{1}^{(N)}f$ appearing in Remark \ref{cusptocusp} clearly extends to the map $\mathcal{S}_{t}^{(N)}f$ described in Theorem \ref{Stf}, as well as in the more general situations appearing in Propositions \ref{4divNlevN} and \ref{level} below. In the case where $f\in\mathcal{M}_{k+1/2}^{+}\big(\widetilde{\Gamma}_{0}(4N),\psi^{2k+1}\omega_{\chi}\big)$ and $\chi$ is quadratic, we expect the statement about Atkin--Lehner eigenfunctions from Remark \ref{ALprob} to extend to these situations as well.

Following Remark \ref{L+L-same} and Proposition \ref{diamSV}, we remark again that if $4 \nmid N$ (and $t$ is odd) then $\varepsilon$ is determined by the action of $\widetilde{\Gamma}$ on $f$, and the coefficients $c_{m}^{(d)}$ carry no ambiguity. On the other hand, we have
\begin{prop}
If $4 \nmid N$ then the assertions of Theorem \ref{Stf} hold for any $f\in\mathcal{M}_{k+1/2}^{!}(\widetilde{\Gamma}_{N}^{\psi})$, and are independent of the choice of $\varepsilon$. In particular, the level of $\mathcal{S}_{t}^{(N)}f$ is $N$ for every such $f$. \label{4divNlevN}
\end{prop}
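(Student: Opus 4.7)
The plan has three ingredients: extending the diamond operators $\langle d\rangle_{\pm}$ to all of $\mathcal{M}^{!}_{k+1/2}(\widetilde{\Gamma}_{N}^{\psi})$, establishing independence of the choice of $\varepsilon$, and decomposing $f$ according to Dirichlet character.

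First, I will extend $\langle d\rangle_{\pm}$ by setting $\langle d\rangle_{\varepsilon}f:=f\big|_{k+1/2,\psi^{\varepsilon}}A$ for any $A\in\widetilde{\Gamma}_{0}(16N)$ whose lower right entry lies in $d+N\mathbb{Z}$, exactly as in Proposition \ref{diamSV}. The difference between the two characters $\psi^{\pm1}$ on such an $A$ is multiplication by $\psi^{2}(A)=\bigl(\frac{-1}{d}\bigr)$ from Equation \eqref{rho1Gamma04}, so one obtains $\langle d\rangle_{+}f=\bigl(\frac{-1}{d}\bigr)\langle d\rangle_{-}f$ regardless of whether $4\mid N$. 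Substituting the resulting relation $c_{n}^{(d),+}=\bigl(\frac{-1}{d}\bigr)c_{n}^{(d),-}$ into the formula of Theorem \ref{Stf}, the factor $\bigl(\frac{\varepsilon t}{d}\bigr)$ paired with $c_{tl^{2}/d^{2}}^{(d),\varepsilon}$ produces the same summand for both $\varepsilon\in\{\pm1\}$. For the constant term, Remark \ref{parzetas} shows that $4\nmid N$ avoids the exceptional congruence $N\equiv 4\pmod 8$, so the coefficient of $c_{0}^{(d)}$ simplifies to $\bigl(\frac{\varepsilon t}{d}\bigr)\zeta_{N}^{(d)}(1-k)/2$ per $d$, and the same cancellation yields independence of $\varepsilon$.

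Next, by the second part of Proposition \ref{char4notdivN}, I will decompose $f=\sum_{\chi}f_{\chi}$ over Dirichlet characters $\chi$ modulo $N$, with $f_{\chi}\in\mathcal{M}^{!}_{k+1/2}\bigl(\widetilde{\Gamma}_{0}(4N),\psi^{2k+1}\omega_{\chi}\bigr)$. By linearity it suffices to show each $\mathcal{S}_{t}^{(N)}f_{\chi}$ lies in $\mathcal{M}^{!}_{2k}\bigl(\Gamma_{0}(N),\chi^{2}\bigr)$. For each $\chi$, set $\varepsilon_{\chi}:=(-1)^{k}\chi(-1)$, so $f_{\chi}\in\mathcal{M}(\widetilde{\Gamma}_{N},\psi^{\varepsilon_{\chi}})$. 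Using the $\varepsilon$-independence just established, I will compute the formula with $\varepsilon=\varepsilon_{\chi}$ (which need not equal $(-1/t)$); then the formula's indices $tl^{2}/d^{2}\in\{0,\varepsilon_{\chi}\}\pmod 4$ coincide with the plus-space indices for $f_{\chi}$. Kohnen's projection, available in the $4\nmid N$ case from \cite{[K2]} and \cite{[UY]}, splits $f_{\chi}=f_{\chi}^{+}+f_{\chi}^{-}$, and $f_{\chi}^{-}$ contributes nothing to $\mathcal{S}_{t}^{(N)}f_{\chi}$ since its Fourier indices lie in $\{2,-\varepsilon_{\chi}\}\pmod 4$, disjoint from those appearing in the formula.

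Finally, when $\varepsilon_{\chi}=(-1/t)$, Theorem \ref{Stf} applied to $f_{\chi}^{+}$ places $\mathcal{S}_{t}^{(N)}f_{\chi}^{+}$ in $\mathcal{M}^{!}_{2k}(\Gamma_{0}(N),\chi^{2})$ with the asserted pole structure at CM points. When $\varepsilon_{\chi}=-(-1/t)$, I invoke the dual analog of Theorem \ref{Stf} obtained by replacing $\mathcal{L}_{+}$ with $\mathcal{L}_{-}$ and $\rho_{1}$ with $\rho_{1}^{*}$ throughout, as outlined in Remark \ref{L-}; the Borcherds lift Theorem \ref{Bor143} applies equally well to $\rho_{1}^{*}\otimes\rho_{B}(N)$ and yields the same conclusion. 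Summing over $\chi$ gives $\mathcal{S}_{t}^{(N)}f\in\mathcal{M}^{!}_{2k}(\Gamma_{0}(N))$, establishing the level-$N$ assertion. The main obstacle will be carrying through the $\mathcal{L}_{-}$-analog of the constructions of Sections \ref{2dLift}, \ref{2N^2Lift}, and \ref{ThetaLift} for the case $\varepsilon_{\chi}=-(-1/t)$; by the symmetry between $\rho_{1}$ and $\rho_{1}^{*}$ this is routine, but requires careful bookkeeping of signs and complex conjugations.
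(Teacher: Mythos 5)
The statement as printed contains a typo that your proof inherits: the hypothesis should be $4\mid N$, not $4\nmid N$. This is indicated by the label of the proposition, by the contrastive ``On the other hand'' in the sentence immediately preceding it (which notes that for $4\nmid N$ there is no choice of $\varepsilon$ to be independent of), and by the paper's own proof; it is confirmed by Theorem \ref{level}, whose cases $(vi)$--$(viii)$ show that for $4\nmid N$ and $f$ outside the plus-space the level of the lift is in general $2N$ rather than $N$, and that in case $(viii)$ even the modularity of $\mathcal{S}_{t}^{(N)}f$ itself is not established. The paper's proof is accordingly a pure $4\mid N$ argument: the $\varepsilon$-independence follows from $\langle d\rangle_{+}f=\bigl(\tfrac{-1}{d}\bigr)\langle d\rangle_{-}f$ (Proposition \ref{diamSV}, valid only for $4\mid N$, where $d$ modulo $4$ is determined by $d$ modulo $N$ and the two spaces $\mathcal{A}_{k+1/2}(\widetilde{\Gamma}_{N},\psi^{\pm1})$ coincide), and the reduction to Theorem \ref{Stf} is carried out by the projection $P_{t}$ (namely $P_{\varepsilon}$ or $P_{2}$) of Proposition \ref{proj+4divN} --- whose hypothesis is $4\mid N$ --- after checking that $P_{t}$ commutes with the diamond operators, so that $\mathcal{S}_{t}f=\mathcal{S}_{t}(P_{t}f)$ and Theorem \ref{Stf} applies to $P_{t}f$.

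Taking $4\nmid N$ at face value, your argument also has concrete internal failures. First, the identity $\langle d\rangle_{+}f=\bigl(\tfrac{-1}{d}\bigr)\langle d\rangle_{-}f$ ``regardless of whether $4\mid N$'' is not available: when $4\nmid N$ the residue of $d$ modulo $4$ is not determined by its residue modulo $N$, and the two operators are defined on the two \emph{distinct} spaces $\mathcal{A}_{k+1/2}(\widetilde{\Gamma}_{N},\psi^{\pm1})$ of Proposition \ref{char4notdivN}, never both on the same eigencomponent. Second, in the branch $\varepsilon_{\chi}=-\bigl(\tfrac{-1}{t}\bigr)$ the indices occurring in the formula satisfy $tl^{2}/d^{2}\equiv0,t\equiv0,-\varepsilon_{\chi}\pmod{4}$, not $0,\varepsilon_{\chi}$; hence the Kohnen minus-part $f_{\chi}^{-}$, whose indices are $\equiv2,-\varepsilon_{\chi}\pmod{4}$, \emph{does} contribute, contrary to your claim. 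Third, the ``dual analog of Theorem \ref{Stf}'' via $\mathcal{L}_{-}$ and $\rho_{1}^{*}$ is not routine and is in fact unavailable from Theorem \ref{Bor143}: the input representation there is forced to be $\rho_{1}(N)\cong\rho_{1}\otimes\rho_{B}(N)$ by the lattice $L_{1}(N)$ of signature $(2,1)$, whose positive definite piece $K=\mathbb{Z}H$ has Weil representation $\rho_{1}$; replacing $\rho_{1}$ by $\rho_{1}^{*}$ would require negating the norm of $H$, producing signature $(1,2)$, to which Borcherds' theorem does not apply. These are precisely the obstructions that produce the levels $2N$ (and the unresolved case $(viii)$) in Theorem \ref{level} when $4\nmid N$.
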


\begin{proof}
Proposition \ref{diamSV} implies that the dependence of $c_{m}^{(d)}$ on $\varepsilon$ is the same as of $\big(\frac{\varepsilon}{d}\big)$. It follows that the product $\big(\frac{\varepsilon}{d}\big)c_{m}^{(d)}$ appearing in Theorem \ref{Stf} is independent of $\varepsilon$. Now, the assumption on $f$ appearing in that Theorem is that only Fourier coefficients whose indices are congruent to 0 or to t modulo 4 may not vanish, and the formula appearing there only uses these coefficients. But Proposition \ref{proj+4divN} provides us with a projection map $P_{t}$ (which is  $P_{\varepsilon}$ if $t$ is odd and $\big(\frac{-1}{t}\big)=\varepsilon$ and it is $P_{2}$ for even $t$) on $\mathcal{A}_{k+1/2}(\widetilde{\Gamma}_{N}^{\psi})$, whose action leaves only the required coefficients. Moreover, the conjugation formula for $T^{1/4}$ appearing in the proof of that Proposition shows that conjugating the matrix $A\in\widetilde{\Gamma}_{0}(16N)$ which we use in Proposition \ref{diamSV} to present the diamond operator $\langle d \rangle_{\varepsilon}$ gives another element of $\widetilde{\Gamma}_{0}(16N)$, with the same lower right entry modulo $4N$. It follows that $P_{t}$ commutes with the diamond operators, so that applying $\langle d \rangle_{\varepsilon}$ to $P_{t}f$ only takes the right coefficients from $\langle d \rangle_{\varepsilon}f$. It follows that $\mathcal{S}_{t}f$ is the same as $\mathcal{S}_{t}(P_{t}f)$, for which the assertions holds by Theorem \ref{Stf}. This proves the proposition.
\end{proof}

\smallskip

We conclude with discussing three questions, which turn out to be closely related. One is the dependence of the Shimura lift on the level of the space in which we consider $f$ (i.e., an element of $\mathcal{M}_{k+1/2}(\widetilde{\Gamma}_{N}^{\psi})$ lies also in $\mathcal{M}_{k+1/2}(\widetilde{\Gamma}_{MN}^{\psi})$ for any $M\in\mathbb{N}$). The second one is Shimura lifts with indices which are not square-free. The third question is the minimal level of the Shimura lift. We extend the definition of $\mathcal{S}_{t}$ so that $\mathcal{S}_{t}f$ is defined by the formula from Theorem \ref{Stf} to all $f\in\mathcal{M}_{k+1/2}(\widetilde{\Gamma}_{N},\psi^{\varepsilon})$ and all $t\in\mathbb{N}$ (not necessarily square-free), where we write the constant term of $\mathcal{S}_{t}f$ as $-\sum_{h=1,\ (h,Nt)=1}^{4Nt}\big(\frac{\varepsilon t}{h}\big)\frac{\zeta_{4Nt}^{(h)}(1-k)c_{0}^{(h)}}{2}$ for well-definedness in some cases (this gives the previous constant term wherever $h\mapsto\big(\frac{\varepsilon t}{h}\big)$ is defined modulo $Nt$ by the same argument as in Remark \ref{parzetas}). Propositions \ref{char4notdivN} and \ref{4divNlevN} imply that if $4 \nmid N$ then $\varepsilon$ is determined by $f$, while if $4|N$ the expression for $\mathcal{S}_{t}f$ is independent of the choice of $\varepsilon$. Given $s\in\mathbb{N}$, we recall the (Hecke type) operator $U_{s}$ sending a holomorphic Fourier expansion $\sum_{n=0}^{\infty}a_{n}\mathbf{e}(n\sigma)$ to the Fourier expansion $\sum_{n=0}^{\infty}a_{ns}\mathbf{e}(n\sigma)$.

Let now $N$, $t$, $M$, and $s$ be natural numbers, and denote the set of primes dividing $M$ but not $Nt$ by $I$. For any $J \subseteq I$ we define $p_{J}=\prod_{p \in J}p$, and $|J|$ is the cardinality of $J$.
\begin{prop}
The following assertions hold:
\begin{enumerate}[$(i)$]
\item For any $f\in\mathcal{M}_{k+1/2}^{!}(\widetilde{\Gamma}_{N},\psi^{\varepsilon})$ with $\varepsilon\in\{\pm1\}$, the equality \[\mathcal{S}_{t}^{(MN)}f=\sum_{J \subseteq I}\frac{(-1)^{|J|}}{p_{J}}\bigg(\frac{\varepsilon t}{p_{J}}\bigg)\cdot\mathcal{S}_{t}^{(N)}(\langle p_{J} \rangle_{\varepsilon}f)\big|C_{p_{J}}\] (with the slash operators having weight $2k$) holds. In case $\chi$ is a Dirichlet character modulo $N$ with the assumptions of that theorem and $f$ belongs to $\mathcal{M}_{k+1/2}^{!}\big(\widetilde{\Gamma}_{0}(4N),\psi^{2k+1}\omega_{\chi}\big)$, the term corresponding to $J$ is $\frac{(-1)^{|J|}}{p_{J}}\big(\frac{\varepsilon t}{p_{J}}\big)\chi(p_{J})\cdot\mathcal{S}_{t}^{(N)}f\big|C_{p_{J}}$.
\item If $s$ is odd then the conditions for $f_{ts^{2}}$ to be $\mathcal{A}_{k+1/2}^{+}(\widetilde{\Gamma}_{Nts^{2}},\psi)$ are the same as those for $f_{t}$ to be in  $\mathcal{A}_{k+1/2}^{+}(\widetilde{\Gamma}_{Nt},\psi)$. If $s$ is even then  $f_{ts^{2}}\in\mathcal{A}_{k+1/2}^{+}(\widetilde{\Gamma}_{Nts^{2}},\psi)$ for every $f\in\mathcal{A}_{k+1/2}(\widetilde{\Gamma}_{N}^{\psi})$.
\item For $f$ as in part $(i)$ we have the equality $\mathcal{S}_{ts^{2}}^{(N)}f=(\mathcal{S}_{t}^{(Ns)}f)\big|U_{s}$. If $t$ is square-free then the equality $\mathcal{S}_{1}^{(Nts)}f_{ts^{2}}=(st)^{-k}(\mathcal{S}_{t}^{(Ns)}f)\big|C_{st}$ also holds.
\end{enumerate} \label{highlev}
\end{prop}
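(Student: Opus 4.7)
The plan is to verify all three parts by direct manipulation of the Fourier expansions produced by the extended definition of $\mathcal{S}_{t}^{(N)}$, combined with Proposition~\ref{diamSV} (for the diamond operators) and Lemma~\ref{relfft} (for the effect of rescaling $\tau\mapsto t\tau$ on the character and the plus-space).

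For part $(i)$, the only discrepancy between the formulas for $\mathcal{S}_{t}^{(MN)}f$ and $\mathcal{S}_{t}^{(N)}f$ lies in the coprimality condition on the summation index $d$. I would apply M\"obius-style inclusion--exclusion in the form $\mathbf{1}_{(d,p_{I})=1}=\sum_{J\subseteq I}(-1)^{|J|}\mathbf{1}_{p_{J}\mid d}$ and, within each $J$-summand, substitute $d=p_{J}d'$. Since the primes in $J$ are coprime to $Nt$, one finds $d^{k-1}(\varepsilon t/d)=p_{J}^{k-1}(d')^{k-1}(\varepsilon t/p_{J})(\varepsilon t/d')$, while $c_{tl^{2}/d^{2}}^{(d)}$ becomes the corresponding coefficient of $\langle d'\rangle_{\varepsilon}(\langle p_{J}\rangle_{\varepsilon}f)$ by the multiplicativity of the diamond operators expressed through Proposition~\ref{diamSV}. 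The factor $p_{J}^{k-1}=p_{J}^{-1}\cdot p_{J}^{k}$ is exactly what the weight-$2k$ slash operator $|C_{p_{J}}$ contributes (sending $q^{l/p_{J}}$ to $p_{J}^{k}q^{l}$), yielding the claimed formula on holomorphic coefficients. The constant term decomposes in the same fashion via the well-known identity $\zeta_{4Nt}^{(h)}(s)=\sum_{m=0}^{M-1}\zeta_{4MNt}^{(h+4Ntm)}(s)$, and the character-twisted case follows by additionally invoking $\langle p_{J}\rangle_{\varepsilon}f=\chi(p_{J})f$, which is part of the proof of Proposition~\ref{mainchar}.

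Part $(ii)$ is a direct consequence of Lemma~\ref{relfft} applied with $t$ replaced by $ts^{2}$. If $s$ is odd then $s^{2}\equiv1\pmod{8}$, and the Kronecker symbol decomposes as $\chi_{ts^{2}}=\chi_{t}\chi_{s}^{2}=\chi_{t}$ on matrices whose lower-right entry is coprime to $s$; moreover $(-1/ts^{2})=(-1/t)$, and the support of $f_{ts^{2}}$ in $q$-coefficients lies in the same residue classes modulo $4$ as that of $f_{t}$, so the applicable case of Lemma~\ref{relfft} and the plus-space requirement are unchanged. If $s$ is even then $4\mid ts^{2}$, so the first case of Lemma~\ref{relfft} applies unconditionally; a quick conductor check (using $v_{2}(s)\geq1$) shows that $\chi_{ts^{2}}$ has conductor dividing $Nts^{2}$ and is therefore trivial on $\widetilde{\Gamma}_{Nts^{2}}$, and every Fourier index of $f_{ts^{2}}$ is divisible by $4$, making the plus-space condition with respect to $\psi$ automatic.

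For part $(iii)$, both equalities are verified by comparing Fourier coefficients. For the first, the observation $(d,Nts^{2})=1\iff(d,Nts)=1$ together with $(\varepsilon ts^{2}/d)=(\varepsilon t/d)$ (using $(s,d)=1$) shows that the $q^{l}$-coefficient of $\mathcal{S}_{ts^{2}}^{(N)}f$ equals the $q^{sl}$-coefficient of $\mathcal{S}_{t}^{(Ns)}f$, which is precisely what $|U_{s}$ extracts; the constant terms agree by the same partial zeta identity as in $(i)$. For the second equality, the hypothesis that $t$ is square-free enters through a prime-by-prime analysis of the condition $ts^{2}d^{2}\mid l^{2}$: since $v_{p}(t)\in\{0,1\}$, this translates into $v_{p}(l)\geq v_{p}(t)+v_{p}(s)+v_{p}(d)$ for every prime $p$, so $tsd\mid l$, and in particular $st\mid l$. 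Writing $l=stL$ and extending the identity from the proof of Theorem~\ref{Stf} to $\langle d\rangle_{+}f_{ts^{2}}$ (whose coefficient at index $n$ is $(\varepsilon t/d)c_{n/(ts^{2})}^{(d)}$ when $ts^{2}\mid n$ and $0$ otherwise) collapses $l^{2}/(d^{2}ts^{2})$ to $tL^{2}/d^{2}$, so the $q^{stL}$-coefficient of $\mathcal{S}_{1}^{(Nts)}f_{ts^{2}}$ reduces to the $q^{L}$-coefficient of $\mathcal{S}_{t}^{(Ns)}f$, matching $\mathcal{S}_{t}^{(Ns)}f(st\sigma)=(st)^{-k}(\mathcal{S}_{t}^{(Ns)}f)|C_{st}$. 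The main obstacle throughout will be the careful bookkeeping of Kronecker symbols, powers of $\psi$, and the extended diamond operators from Proposition~\ref{diamSV}, particularly when reconciling the constant terms, where partial zeta functions of several moduli must be combined.
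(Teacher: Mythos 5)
Your proposal is correct and follows essentially the same route as the paper's proof: inclusion--exclusion on the coprimality condition for part $(i)$, the observations $s^{2}\equiv1\ (\mathrm{mod}\ 4)$ for odd $s$ and $4\mid s^{2}$ for even $s$ together with Lemma \ref{relfft} for part $(ii)$, and direct Fourier-coefficient comparison for part $(iii)$, with the key divisibility fact that $ts^{2}d^{2}\mid l^{2}$ forces $std\mid l$ when $t$ is square-free. The paper states these steps very tersely (citing Lemmas 5 and 6 of \cite{[T]}), and your write-up simply supplies the bookkeeping in full.
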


\begin{proof}
Part $(i)$ is essentially Lemma 5 of \cite{[T]}, and is easily proved by comparing the co-primality condition on $d$ (or $d+Nm$, or $h$) in the definition with $N$ and with $MN$ and using the inclusion-exclusion principle. Part $(ii)$ easily follows from the definition of the plus-space, using the fact that $s^{2}\equiv1(\mathrm{mod\ }4)$ for odd $s$ while $4|s^{2}$ for even $s$. The first equality in part $(iii)$, which is roughly Lemma 6 of \cite{[T]}, follows immediately by comparing the Fourier coefficients of the functions involved. The second equality is established as in the proof of Theorem \ref{Stf}, noting that $ts^{2}$ divides $l^{2}$ if and only if $st|l$ when $t$ is square-free. This completes the proof of the proposition.
\end{proof}

Part $(i)$ of Proposition \ref{highlev} shows that if $N$, $f$, and $t$ satisfy the conditions of Theorem \ref{Stf} then $\mathcal{S}_{t}^{(MN)}f$ coincides with $\mathcal{S}_{t}^{(Np_{I})}f$, hence has level $Np_{I}$ (note that Proposition \ref{diamSV} shows that the diamond operators $\langle p_{J} \rangle_{\varepsilon}$ appearing there preserve the plus-space condition, so that the levels of the summands are known). A classical result, which is easy to prove, shows that in integral weights the operator $U_{s}$ takes modular forms of any level $L$ to modular forms of level $[L,s]$, the least common multiple $\frac{Ls}{(L,s)}$ of $L$ and $s$. This operator also commutes with the diamond operators, when defined using appropriate representatives. This combines with part $(iii)$ of Proposition \ref{highlev} to show that under our assumptions on $N$, $f$, and $t$, the level of $\mathcal{S}_{ts^{2}}^{(MN)}f$ is $[L,s]p_{J}$ where $J$ is the set of primes in $I$ which do not divide $s$ (note that while the level of $\mathcal{S}_{t}^{(Ns)}f$ may be larger than $N$, its least common multiple with $s$ coincides with $[N,s]$). Part $(iii)$ of Proposition \ref{highlev} suggests that using $f_{ts^{2}}$ rather than $f_{t}$ in the proof of Theorem \ref{Stf} may be useful, but part $(ii)$ shows that the only case where this idea can produce new results is where $N$, $f$, and $t$ do not satisfy the conditions of Theorem \ref{Stf} and $s=2$ (larger even values of $s$ will then follow from this case). Note that here, as well as in Theorem \ref{level} below, being of level $L$ means lying in $\mathcal{M}_{2k}\big(\Gamma_{0}(L)\cap\Gamma_{1}^{\sqrt{1}}(N)\big)$.

The accurate level of the Shimura lift in general is now determined as follows.
\begin{thm}
Let $f$ be a modular form of weight $k+\frac{1}{2}$ with respect to some congruence subgroup, and let $N$ be the minimal number such that $f\in\mathcal{M}_{k+1/2}(\widetilde{\Gamma}_{N}^{\psi})$. Given three positive integers $t$, $s$, and $M$ with $t$ square-free, we define $I$ as in Proposition \ref{highlev} and $J$ to be the set of primes in $I$ which do not divide $s$. Consider now the Shimura lift $\mathcal{S}_{ts^{2}}^{(MN)}f$, and we have the following cases:
\begin{enumerate}[$(i)$]
\item $t$ is odd and $f\in\mathcal{M}_{k+1/2}^{+}(\widetilde{\Gamma}_{N},\psi^{\varepsilon})$ with $\varepsilon=\big(\frac{-1}{t}\big)$.
\item $4|N$.
\item $Nt$ is odd and $M$ is even.
\item $4|s$.
\item $N$ is odd and $s$ is even.
\item $Ns$ is odd and $t$ is even.
\item $N\equiv2(\mathrm{mod\ }4)$, $4 \nmid s$, and we are not in case $(i)$.
\item $MNst$ is odd, $f$ lies in one of the subspaces $f\in\mathcal{M}_{k+1/2}(\widetilde{\Gamma}_{N},\psi^{\varepsilon})$ from Proposition \ref{char4notdivN}, but if $\varepsilon=\big(\frac{-1}{t}\big)$ then $f$ is not in the plus-space.
\end{enumerate}
In cases $(i)$--$(v)$ the function $\mathcal{S}_{ts^{2}}^{(MN)}f$ has level $p_{J}[N,s]$. In the two cases $(vi)$ and $(vii)$ its level is $2p_{J}[N,s]$. In the remaining case $(viii)$ we find that the combination $\mathcal{S}_{ts^{2}}^{(MN)}f-\frac{1}{2}\big(\frac{2}{t}\big)\cdot\mathcal{S}_{ts^{2}}^{(MN)}(\langle2\rangle_{\varepsilon}f)\big|C_{2}$ has level $2p_{J}[N,s]$. The assertions about characters, and about meromorphicity of lifts of weakly holomorphic modular forms, extend to this case. \label{level}
\end{thm}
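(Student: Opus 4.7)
The plan is to carry out a systematic reduction via Proposition \ref{highlev}: part $(iii)$ reduces the index $ts^{2}$ to the square-free index $t$ at a higher base level, while part $(i)$ reduces from an arbitrary level $MN$ to the minimal base level $N$ through an inclusion-exclusion over $J'\subseteq I$. A modular form of level $L$ becomes a modular form of level $[L,s]$ under $U_{s}$, while conjugation by $C_{p_{J'}}$ in Proposition \ref{highlev}(i) contributes a factor $p_{J'}$. My task in each of the eight cases will be to locate an intermediate base level at which Theorem \ref{Stf} applies directly, and then to track how these two reductions assemble into the claimed final level.

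In cases $(i)$ and $(ii)$, Theorem \ref{Stf} immediately gives $\mathcal{S}_{t}^{(N)}f$ of level $N$: in case $(ii)$ the plus-space condition will be enforced by replacing $f$ with $P_{\varepsilon}f$ from Proposition \ref{proj+4divN}, which does not change the Shimura lift since only Fourier coefficients surviving the projection enter the formula. Chaining Proposition \ref{highlev}(iii) with $U_{s}$, and then Proposition \ref{highlev}(i) with the expansion at level $MNs$, will yield level $p_{J}[N,s]$, precisely as in the discussion preceding the theorem. In cases $(iii)$, $(iv)$, $(v)$ the hypotheses of Theorem \ref{Stf} fail at the base level $N$ but hold at an auxiliary level obtained by absorbing an available factor of $2$ or $4$ either into $[N,s]$ or into $p_{J}$: in $(iii)$, since $M$ is even and $Nt$ is odd one has $2\in I$, and Theorem \ref{Stf} will apply at base level $4N$, which lies in case $(ii)$; in $(iv)$, the condition $4\mid s$ makes $Ns$ divisible by $4$, so Theorem \ref{Stf} applies at base level $Ns$; case $(v)$ is similar, using $2\mid s$ together with an auxiliary factor from $p_{J}$ or the base.

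Cases $(vi)$ and $(vii)$ are genuinely obstructed: no intermediate base level will be simultaneously divisible by $4$ and a divisor of $p_{J}[N,s]$, which forces an irremovable factor of $2$ in the level, yielding $2p_{J}[N,s]$. Case $(viii)$, the most delicate, will be handled by applying Proposition \ref{highlev}(i) with auxiliary multiplier $M'=2$. Since $MNst$ is odd, the prime $2$ lies in the corresponding $I'$-set for the enlarged level $2MN$, and part $(i)$ gives
\[\mathcal{S}_{ts^{2}}^{(2MN)}f=\mathcal{S}_{ts^{2}}^{(MN)}f-\frac{1}{2}\big(\tfrac{\varepsilon t}{2}\big)\mathcal{S}_{ts^{2}}^{(MN)}(\langle 2\rangle_{\varepsilon}f)\big|C_{2}.\]
By case $(iii)$ applied with $M$ replaced by $2M$, the left-hand side has level $2p_{J}[N,s]$, so the stated combination will inherit this level. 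The identity $\big(\tfrac{\varepsilon t}{2}\big)=\big(\tfrac{2}{t}\big)$ follows from the Legendre-symbol convention at $2$ recalled before Equation \eqref{ftCtdef}. The weakly holomorphic and harmonic weak Maass form extensions are inherited from the corresponding assertions in Theorem \ref{Stf} and Remark \ref{Maass}. The main obstacle throughout will be the careful 2-power bookkeeping that cleanly separates the easy cases $(i)$--$(v)$ from the obstructed cases $(vi)$--$(viii)$.
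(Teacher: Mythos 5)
Your overall architecture---reducing the index via Proposition \ref{highlev}$(iii)$, reducing the level via the inclusion--exclusion of Proposition \ref{highlev}$(i)$, and producing the case $(viii)$ combination by applying part $(i)$ with an auxiliary multiplier $2$---matches the paper's, and your treatment of cases $(i)$ and $(ii)$ (the latter via the projection of Proposition \ref{proj+4divN}, i.e.\ Proposition \ref{4divNlevN}) is fine. The genuine gap is in cases $(iii)$--$(vii)$: you propose to restore the hypotheses of Theorem \ref{Stf} by passing to an auxiliary \emph{base} level $4N$ (or $Ns$), but any such base level is too large, and the resulting bound on the level of the lift overshoots the one claimed. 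Concretely, in case $(iii)$ with $N=3$, $t=s=1$, $M=2$ and $f$ not in the plus-space, the claim is that $\mathcal{S}_{1}^{(6)}f$ has level $p_{J}[N,s]=6$; your route identifies $\mathcal{S}_{1}^{(6)}f$ with $\mathcal{S}_{1}^{(12)}f$ and then only yields level $12$ from Proposition \ref{4divNlevN} at base level $12$. Similarly, in case $(iv)$ with $N=3$, $s=12$, $M=t=1$, declaring $Ns=36$ the base level yields level $36$ instead of the claimed $[N,s]=12$: the sharp statement requires knowing that the level of $\mathcal{S}_{t}^{(Ns)}f$ divides $Np_{I}$ with $I$ built from the primes of $s$, information you discard by absorbing $s$ into the base. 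Finally, your argument for $(vi)$--$(vii)$ that an ``irremovable factor of $2$'' is forced is a lower-bound heuristic, whereas the theorem asserts the upper bound, namely actual modularity at level $2p_{J}[N,s]$, which your machinery would only deliver at $4p_{J}[N,s]$.

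The missing ingredient is the paper's key intermediate step: for \emph{arbitrary} $f\in\mathcal{M}_{k+1/2}(\widetilde{\Gamma}_{N}^{\psi})$, with no plus-space hypothesis and $4\nmid N$, the lift $\mathcal{S}_{t}^{(2N)}f$ already has level $2N$, not $4N$. This comes from taking $s=2$ in Proposition \ref{highlev}: part $(ii)$ shows that $f_{4t}=f_{t\cdot2^{2}}$ lies in $\mathcal{A}_{k+1/2}^{+}(\widetilde{\Gamma}_{4Nt},\psi)$ automatically, and the identity $\mathcal{S}_{1}^{(2Nt)}f_{4t}=(2t)^{-k}\big(\mathcal{S}_{t}^{(2N)}f\big)\big|C_{2t}$ from part $(iii)$, substituted into the \emph{proof} (not merely the statement) of Theorem \ref{Stf}, shows that the $C_{2t}^{-1}$-conjugation strips the entire factor $4t$ from the level and leaves $2N$. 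With this in hand, all of $(iii)$--$(viii)$ follow uniformly by rewriting $\mathcal{S}_{ts^{2}}^{(MN)}f$ over the base level $2N$ and comparing $[2Np_{I,2},s]$ (with $p_{I,2}$ the odd part of $p_{I}$) against $p_{J}[N,s]$; that comparison is precisely what separates $(iii)$--$(v)$ from $(vi)$--$(viii)$. Your reduction of case $(viii)$ to case $(iii)$ is structurally correct, and your identification $\big(\tfrac{\varepsilon t}{2}\big)=\big(\tfrac{2}{t}\big)$ is right, but the conclusion inherits the spurious factor of $2$ until case $(iii)$ is established at the sharp level.
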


\begin{proof}
The assertion in cases $(i)$ and $(ii)$ was proved just before the statement of the theorem. We henceforth assume we are in neither of these cases. Consider the case with $M=s=1$. We plug the equality $\mathcal{S}_{1}^{(2Nt)}f_{4t}=(2t)^{-k}(\mathcal{S}_{t}^{(2N)}f)\big|C_{2t}$ from part $(iii)$ of Proposition \ref{highlev} and the fact that $f_{4t}\in\mathcal{A}_{k+1/2}^{+}(\widetilde{\Gamma}_{4Nt},\psi)$ by part $(ii)$ of that proposition into the proof of Theorem \ref{Stf}. The argument now shows that $\mathcal{S}_{t}^{(2N)}f$ has level $2N$. For general $M$ and $s$ we now apply the same argument from above, and observe that $\mathcal{S}_{ts^{2}}^{(2MN)}f$ coincides with $\mathcal{S}_{ts^{2}}^{(MN)}f$ except in case $(viii)$ where the former Shimura lift is the asserted combination (part $(i)$ of Proposition \ref{highlev} again). The remaining assertions now follow by comparing $[2Np_{I,2},s]$, where $p_{I,2}$ is the odd part of $p_{I}$, with $[N,s]p_{J}$ in all these cases. This proves the theorem.
\end{proof}
Note that Theorem \ref{level} is almost exhaustive, in the sense that if we are not in any of the cases $(i)$--$(v)$ then the conditions of precisely one of the cases $(vi)$, $(vii)$, or $(viii)$ must hold, except maybe the assertion that $f$ lies in one of the subspaces $\mathcal{M}_{k+1/2}(\widetilde{\Gamma}_{N},\psi^{\varepsilon})$ in the latter case. Without this assumption one has to decompose $f$ into its two $\mathcal{M}_{k+1/2}(\widetilde{\Gamma}_{N},\psi^{\varepsilon})$ components, and the correction term in the combination is different for each component (because of the index of the diamond operator $\langle2\rangle_{\varepsilon}$). Unfortunately, our proof of Theorem \ref{level} does not show (at least not directly) the modularity of $\mathcal{S}_{ts^{2}}^{(MN)}f$ itself in case $(viii)$. Note that the Dirichlet $L$-function appearing in the full $L$-function considered in \cite{[Sh]} and \cite{[N]} already excludes the 2-adic multiplier, showing that these references do not treat this case as well.

\noindent\textsc{Fachbereich Mathematik, AG 5, Technische Universit\"{a}t
Darmstadt, Schlossgartenstrasse 7, D-64289, Darmstadt, Germany}

\noindent E-mail address: li@mathematik.tu-darmstadt.de

\medskip

\noindent\textsc{Einstein Institute of Mathematics, the Hebrew University of Jerusalem, Edmund Safra Campus, Jerusalem 91904, Israel}

\noindent E-mail address: zemels@math.huji.ac.il

\end{document}